\tikzset{>=Latex}
\definecolor{linkcol}{RGB}{22,90,160}
\newcommand{\Ga}{\Gamma}
\newcommand{\Ocal}{\mathcal{O}}
\newcommand{\Cat}[1]{\mathbf{#1}}
\newcommand{\QCoh}{\Cat{QCoh}}
\newcommand{\Perf}{\Cat{Perf}}
\newcommand{\Mot}{\Cat{Mot}}
\newcommand{\Spec}{\operatorname{Spec}}
\newcommand{\SpecGnC}[1]{\mathrm{Spec}^{\mathrm{nc}}_{\Ga}\!\bigl(#1\bigr)}
\newcommand{\nTGMod}[1]{#1\text{-}\Ga\mathrm{Mod}}
\newcommand{\Ch}{\mathbf{Ch}}
\newcommand{\triplearrows}{\rightrightarrows\!\!\rightrightarrows}
\DeclareMathOperator{\Hom}{Hom}
\DeclareMathOperator{\Proj}{Proj}
\newcommand{\Fcal}{\mathcal{F}}
\newcommand{\Gcal}{\mathcal{G}}
\newcommand{\Ecal}{\mathcal{E}}
\newcommand{\Ccal}{\mathcal{C}}
\newcommand{\Kspec}{\mathbb{K}}
\DeclareMathOperator*{\holim}{holim}
\DeclareMathOperator{\Map}{Map}
\DeclareMathOperator{\Fun}{Fun}
\DeclareMathOperator{\HC}{HC}
\DeclareMathOperator{\TC}{TC}
\newcommand{\perf}{\mathrm{perf}}
\newcommand{\AffnGamma}{\Cat{Aff}^{\mathrm{nc}}_{\Ga}}
\theoremstyle{plain}
\newtheorem{theorem}{Theorem}[section]
\newtheorem{lemma}[theorem]{Lemma}
\newtheorem{proposition}[theorem]{Proposition}
\newtheorem{corollary}[theorem]{Corollary}
\theoremstyle{definition}
\newtheorem{definition}[theorem]{Definition}
\newtheorem{notation}[theorem]{Notation}
\newtheorem{setup}[theorem]{Setup}
\theoremstyle{remark}
\newtheorem{remark}[theorem]{Remark}
\crefname{theorem}{Theorem}{Theorems}
\crefname{lemma}{Lemma}{Lemmas}
\crefname{proposition}{Proposition}{Propositions}
\crefname{corollary}{Corollary}{Corollaries}
\crefname{definition}{Definition}{Definitions}
\crefname{example}{Example}{Examples}
\crefname{remark}{Remark}{Remarks}
\crefname{setup}{Setup}{Setups}
\title{FUNDAMENTAL COMPARISON, BASE-CHANGE, AND DESCENT THEOREMS IN THE $K$-THEORY OF NON-COMMUTATIVE $n$-ARY $\Gamma$-SEMIRINGS}
\author{Chandrasekhar Gokavarapu\\
Lecturerin Mathematics  Mathematics, Government College (Autonomous), \\
Rajahmundry,A. P.,India PIN: 533105\\
Department of Mathematics, Acharya Nagarjuna University,  Guntur, 522510, A. P.,India\\chandrasekhargokavarapu@gmail.com}
\begin{document}

\maketitle

\begin{abstract}
We develop a comparison, base--change, and descent framework for the algebraic
$K$--theory of non--commutative $n$--ary $\Gamma$--semirings. Working in the
Quillen--exact (and Waldhausen) setting of bi--finite, slot--sensitive
$\Gamma$--modules and perfect complexes, we construct functorial maps on
$K$--theory induced by extension and restriction of scalars under explicit
$\Gamma$--flatness hypotheses in the relevant positional slots. We prove
derived Morita invariance (via tilting bi--module complexes), establish
Beck--Chevalley type base--change for cartesian squares, and deduce a
projection formula compatible with the multiplicative structure coming from
positional tensor products. Passing to the non--commutative
$\Gamma$--spectrum $\Spec^{\mathrm{nc}}_\Gamma(T)$, we show locality for
perfect objects and derive Zariski hyperdescent for $\Kspec(\Perf)$, together
with excision and localization sequences for closed immersions and fpqc
descent for $\Gamma$--flat covers. Finally, we interpret $K_\Gamma(X)$
geometrically as the $K$--theory of the stable $\infty$--category of
$\Gamma$--perfect complexes, describe its universal property in
$\Gamma$--linear non--commutative motives, and record compatibility with
cyclotomic and Chern--type trace maps.
\end{abstract}

\medskip
\noindent\textbf{Keywords:}
non--commutative $n$--ary $\Gamma$--semiring; positional tensor product;
exact category; Waldhausen category; Quillen $Q$--construction;
$S_\bullet$--construction; perfect $\Gamma$--complex; derived Morita invariance;
base--change; projection formula; Zariski descent; fpqc descent; excision;
localization; non--commutative motives; cyclotomic trace.

\medskip
\noindent\textbf{Mathematics Subject Classification (2020):}
Primary: 19D10, 19E08, 16Y60.
Secondary: 18E30, 55U35.

\section{Introduction}

Algebraic $K$--theory is a universal receptacle for the ``linear''
information carried by an algebraic object \cite{Weibel2013}: finitely generated projective
modules, their automorphisms, and higher coherent homotopies.  For rings and
schemes this philosophy is classical, with Quillen's $Q$--construction \cite{Quillen1973} and
Waldhausen's $S_\bullet$--construction \cite{Waldhausen1985} providing equivalent models, and with
base--change, localization, and descent forming the standard computational
toolkit\cite{Quillen1973, Waldhausen1985, Weibel2013}.  The purpose of this article is to build an analogous toolkit for
\emph{non--commutative $n$--ary $\Gamma$--semirings}---structures where the
multiplication is multi--linear in $n$ inputs, is mediated by an external
parameter semigroup $\Gamma$, and may fail to be commutative in a
slot--dependent sense  \cite{Golan1999}.

The $n$--ary $\Gamma$--setting presents two essential technical features.
First, module actions are inherently \emph{positional}: left/right actions
occur in designated slots, and tensor products must be balanced with respect
to these slots rather than in the usual binary way.  Second, the absence of
additive inverses (semiring context) forces us to work in exact/Waldhausen
categories adapted to admissible monomorphisms and epimorphisms rather than
in abelian categories.  Accordingly, we develop $K$--theory using the
Quillen--exact categories ${T\text{-}\Gamma\mathrm{Mod}}^{\mathrm{bi}}$ of
bi--finite, slot--sensitive $\Gamma$--modules, together with their induced
Waldhausen structures on bounded complexes (cofibrations = admissible monos;
weak equivalences = homotopy equivalences or quasi--isomorphisms, depending on
the chosen derived model).  Throughout, the $K$--theory spectrum of an
exact/Waldhausen category $\mathcal{C}$ is denoted $\mathbb{K}(\mathcal{C})$.

\medskip
\noindent\textbf{Comparison and base--change.}
Given a morphism of $n$--ary $\Gamma$--semirings
$f:(T,\Gamma,\mu)\to (T',\Gamma',\mu')$, the induced extension and restriction
of scalars define exact functors
$f_!:{T\text{-}\Gamma\mathrm{Mod}}^{\mathrm{bi}}\to
 {T'\text{-}\Gamma'\mathrm{Mod}}^{\mathrm{bi}}$
and
$f^{!*}:{T'\text{-}\Gamma'\mathrm{Mod}}^{\mathrm{bi}}\to
 {T\text{-}\Gamma\mathrm{Mod}}^{\mathrm{bi}}$.
A key point is to isolate a practical \emph{$\Gamma$--flatness} condition in
each relevant positional slot ensuring that $f_!$ preserves cofibrations and
weak equivalences on perfect complexes, hence induces a morphism of spectra
$\mathbb{K}(f_!)$.  Under these hypotheses we prove a Beck--Chevalley
(base--change) theorem for cartesian squares and establish a projection
formula compatible with the multiplicative structure coming from the
positional tensor product.

\medskip
\noindent\textbf{Morita invariance.}
A second structural pillar is derived Morita invariance.  When the derived
categories of bi--finite $\Gamma$--modules over $T$ and $T'$ are equivalent via
a tilting bi--module complex, we show that the equivalence restricts to the
Waldhausen subcategories of perfect objects and yields a weak equivalence of
$K$--theory spectra.  In particular, all higher $K$--groups $K_i$ $(i\ge 0)$
are invariants of the derived Morita class in this positional $\Gamma$--world.
This result justifies common reductions (e.g.\ matrix invariance and
devissage along semisimple strata) and supplies the conceptual reason why
computations can be transferred across Morita contexts.

\medskip
\noindent\textbf{Geometric descent.}\cite{Rosenberg1995}
Beyond algebraic functoriality, we place $K$--theory in a geometric
framework by working over the non--commutative $\Gamma$--spectrum
$\Spec_\Gamma^{\mathrm{nc}}(T)$ developed in the surrounding program.
On this site we consider quasi--coherent and perfect $\Gamma$--modules and
prove that $K$--theory of perfect objects satisfies Zariski hyperdescent.
We further establish excision/localization fiber sequences associated to
closed immersions and show fpqc descent for $\Gamma$--flat covers.  These
statements provide the primary computational levers: reduce globally to affine
patches, glue by descent, and control support via localization.

\medskip
\noindent\textbf{Homotopy and motivic interpretation.} \cite{Tabuada2015, Cisinski2011}
Finally, we interpret $K_\Gamma(X)$ as the $K$--theory spectrum of the stable
$\infty$--category $\Perf_\Gamma(X)$ of $\Gamma$--perfect complexes and relate
this to the group completion of the core $\infty$--groupoid of perfect
objects.  In this setting, algebraic $K$--theory appears as a universal
localizing invariant among $\Gamma$--linear stable idempotent--complete
$\infty$--categories, hence is corepresentable in a category of
non--commutative $\Gamma$--motives.  We also record the existence of
Chern/cyclotomic trace maps to $\Gamma$--Hochschild/cyclotomic type invariants,
which can serve as computable approximations to $K$--theory in examples.

\medskip
\noindent\textbf{Organization of the paper.}
Section~\ref{sec:comparison} establishes change--of--scalars functoriality,
derived Morita invariance, base--change for cartesian squares, and the
projection formula, and it proves locality, excision, and fpqc/Zariski descent
for $\mathbb{K}(\Perf)$ over $\Spec_\Gamma^{\mathrm{nc}}(T)$.
Section~\ref{sec:geometry} reframes these results in the language of stable
$\infty$--categories and moduli prestacks of $\Gamma$--perfect complexes,
leading to the geometric identification of $K_\Gamma$ and its universal
property in $\Gamma$--motivic homotopy theory.

\medskip
\noindent\textbf{Notation.}
We write $\mathbb{K}(\mathcal{C})$ for the $K$--theory spectrum of an
exact/Waldhausen category $\mathcal{C}$ and $K_i(\mathcal{C})=\pi_i\mathbb{K}(\mathcal{C})$.
The symbol $\Perf_\Gamma(X)$ denotes the stable $\infty$--category of
$\Gamma$--perfect complexes on $X=\Spec_\Gamma^{\mathrm{nc}}(T)$, and
positional tensor products are denoted $\otimes^{(j,k)}_\Gamma$ when the
balancing slots must be specified.
\section{Preliminaries on $n$--ary $\Gamma$--semirings and positional modules}
\label{sec:prelim}

This section fixes the basic conventions and notation used throughout the
paper.  We recall the ambient algebraic objects, the positional
module categories, and the exact/Waldhausen structures that feed into the
$K$--theory constructions of Section~\ref{sec:comparison} and the geometric
interpretation of Section~\ref{sec:geometry}.

\subsection{$n$--ary $\Gamma$--semirings}
\label{subsec:nary-gamma-semirings}

Let $(\Gamma,+)$ be a (not necessarily commutative) semigroup written
additively \cite{Nobusawa1964}.  An \emph{$n$--ary $\Gamma$--semiring} is a quadruple
$(T,+,\Gamma,\mu)$ where:
\begin{enumerate}[label=(\alph*)]
  \item $(T,+)$ is a commutative monoid with zero element $0$;
  \item $\mu$ is an $n$--ary ``$\Gamma$--multiplication'' map
  \[
    \mu:\ T^{\times n}\times \Gamma^{\times (n-1)}\longrightarrow T,
  \]
  \cite{Nobusawa1964, Barnes1966} which we usually write in the slot--sensitive form
  \[
    \mu(t_1,\ldots,t_n;\gamma_1,\ldots,\gamma_{n-1})
    \ =:\ 
    t_1\,\gamma_1\,t_2\,\gamma_2\cdots \gamma_{n-1}\,t_n;
  \]
  \item $\mu$ is additive in each $T$--variable and (when relevant) additive
  in each $\Gamma$--variable; i.e.\ for each fixed choice of the remaining
  inputs, the maps $t_i\mapsto \mu(\cdots,t_i,\cdots)$ and
  $\gamma_j\mapsto \mu(\cdots,\gamma_j,\cdots)$ are homomorphisms of
  commutative monoids;
  \item $\mu$ satisfies the usual associativity/coherence identities required
  to form iterated $(2n-1)$--fold products (the standard $n$--ary analogue of
  associativity, expressed in terms of equality of all parenthesizations)\cite{Dudek2001}.
\end{enumerate}

\begin{remark}
We do not reproduce the full associativity scheme here; it is the one fixed
in the surrounding program and ensures that iterated expressions such as
\(
(t_1\gamma_1 t_2\cdots \gamma_{n-1}t_n)\,\gamma_n\,t_{n+1}\cdots
\)
are unambiguous up to canonical identification.
\end{remark}

\begin{definition}[Morphisms]
A \emph{morphism} of $n$--ary $\Gamma$--semirings
\(
f:(T,\Gamma,\mu)\to (T',\Gamma',\mu')
\)
consists of additive maps
$f_T:(T,+)\to (T',+)$ and $f_\Gamma:(\Gamma,+)\to(\Gamma',+)$ such that
for all $t_1,\ldots,t_n\in T$ and $\gamma_1,\ldots,\gamma_{n-1}\in\Gamma$,
\[
f_T\!\big(t_1\gamma_1 t_2\gamma_2\cdots \gamma_{n-1}t_n\big)
 \;=\;
f_T(t_1)\,f_\Gamma(\gamma_1)\,f_T(t_2)\cdots f_\Gamma(\gamma_{n-1})\,f_T(t_n)
\]
where the right-hand side is computed using $\mu'$ in $T'$.
We suppress subscripts and write simply $f:T\to T'$ and $\Gamma\to\Gamma'$
when no confusion can arise.
\end{definition}

\subsection{Positional actions and bi--$\Gamma$--modules}
\label{subsec:modules}

Because multiplication is $n$--ary  \cite{Ostmann2008}, module actions are naturally
\emph{positional}: the algebra can act in specified slots of the $n$--fold
product.  We fix once and for all two distinguished slots
\(
1\le j<k\le n
\)
that will be balanced in positional tensor products, as indicated in
Section~\ref{subsec:change-of-scalars} by the notation
$\otimes^{(j,k)}_\Gamma$.

\begin{definition}[Bi--finite slot--sensitive $\Gamma$--modules]
Let $(T,+,\Gamma,\mu)$ be an $n$--ary $\Gamma$--semiring.  A
\emph{bi--$\Gamma$--module} (or \emph{bi--finite slot--sensitive $\Gamma$--module})
over $T$ is an additive commutative monoid $(M,+,0)$ equipped with:
\begin{enumerate}[label=(\alph*)]
  \item a \emph{left positional action} in slot $j$, written
  \[
    T^{\times (j-1)}\times M \times T^{\times (n-j)}
      \times \Gamma^{\times (n-1)} \longrightarrow M,
  \]
  \(
  (t_1,\ldots,t_{j-1},m,t_{j+1},\ldots,t_n;\gamma_1,\ldots,\gamma_{n-1})
  \mapsto
  t_1\gamma_1\cdots t_{j-1}\gamma_{j-1} m\,\gamma_j t_{j+1}\cdots \gamma_{n-1}t_n;
  \)
  \item a \emph{right positional action} in slot $k$, written analogously by
  inserting $m$ in the $k$--th slot;
  \item coherence/associativity axioms compatible with the $n$--ary
  associativity of $\mu$, and additivity in each variable.
\end{enumerate}
We write ${\nTGMod{T}}^{\mathrm{bi}}$ for the category of such objects and
$\Gamma$--linear morphisms (additive maps preserving both positional actions).
\end{definition}

\begin{remark}[Why two slots?]
The pair $(j,k)$ is the fixed ``balancing profile'' for our base--change and
projection results.  One may develop a multi--profile theory, but the present
paper keeps a single profile to maintain clean functorial statements.
\end{remark}

\subsection{Positional tensor products and internal Homs}
\label{subsec:positional-tensor}

The monoidal structure underlying products on $K$--theory is provided by a
positional tensor product that balances the chosen left/right slots.

\begin{definition}[Positional tensor product]
Let $M,N\in {\nTGMod{T}}^{\mathrm{bi}}$.  The \emph{positional tensor product}
\( M\otimes^{(j,k)}_{\Gamma} N \)
is the coequalizer in commutative monoids obtained by balancing the right
$T$--action of $M$ in slot $k$ against the left $T$--action of $N$ in slot $j$,
together with the induced $\Gamma$--compatibility.  Concretely, it is the
quotient of the free commutative monoid on $M\times N$ by the congruence
generated by relations of the form
\[
(m\cdot_{k} t)\otimes n \;\sim\; m\otimes (t\cdot_{j} n),
\qquad
(m+m')\otimes n \sim m\otimes n + m'\otimes n,
\qquad
m\otimes(n+n')\sim m\otimes n + m\otimes n',
\]
and the corresponding $\Gamma$--balance relations (all computed in the fixed
$(j,k)$ profile).
\end{definition}

\begin{remark}
We keep the notation $\otimes^{(j,k)}_{\Gamma}$ (as already used in
Section~\ref{subsec:change-of-scalars}) to emphasize that this tensor depends
on the chosen slots.  When $n=2$ or in symmetric ternary situations, the
profile dependence disappears and one recovers the ordinary tensor product
over $\Gamma$ (cf.\ Proposition~\ref{prop:reduction}).
\end{remark}

\begin{definition}[Internal Hom]
For $M,N\in{\nTGMod{T}}^{\mathrm{bi}}$, define the \emph{internal Hom}
\[
\underline{\Hom}_\Gamma(M,N)
\]
to be the commutative monoid of $\Gamma$--linear maps $M\to N$, equipped with
the induced bi--$\Gamma$--module structure by pre/post composition with the
positional actions.  When no confusion is likely, we write simply $\Hom(M,N)$
for the underlying set of morphisms in ${\nTGMod{T}}^{\mathrm{bi}}$.
\end{definition}

\subsection{Exact structure on ${\nTGMod{T}}^{\mathrm{bi}}$}
\label{subsec:exactness}

Since we work over semirings, ${\nTGMod{T}}^{\mathrm{bi}}$ is typically not
abelian.  We therefore fix an exact structure in the sense of Quillen \cite{Quillen1973}.

\begin{definition}[Admissible monomorphisms and epimorphisms]
A sequence
\[
0\longrightarrow M' \xrightarrow{u} M \xrightarrow{v} M'' \longrightarrow 0
\]
in ${\nTGMod{T}}^{\mathrm{bi}}$ is declared \emph{exact} if:
\begin{enumerate}[label=(\alph*)]
  \item $u$ identifies $M'$ with the kernel congruence of $v$;
  \item $v$ is a cokernel of $u$ in the categorical sense; and
  \item the underlying sequences of commutative monoids are exact and the
  positional actions are compatible with $u$ and $v$.
\end{enumerate}
Morphisms $u$ (resp.\ $v$) occurring as the first (resp.\ second) nontrivial
map in such a sequence are called \emph{admissible monomorphisms}
(resp.\ \emph{admissible epimorphisms}).  With these classes,
${\nTGMod{T}}^{\mathrm{bi}}$ becomes a Quillen--exact category.
\end{definition}

\begin{remark}
This exact structure is the one implicitly used in
Lemma~\ref{lem:exact-waldhausen} and Definition~\ref{def:waldhausen-structure}:
cofibrations in the associated Waldhausen structures are admissible monos.
\end{remark}

\subsection{Complexes, perfect objects, and Waldhausen structure}
\label{subsec:derived}

Let $\Ch({\nTGMod{T}}^{\mathrm{bi}})$ denote the category of (bounded) chain
complexes in ${\nTGMod{T}}^{\mathrm{bi}}$.

\begin{definition}[Perfect objects]
An object of $\Ch({\nTGMod{T}}^{\mathrm{bi}})$ is \emph{perfect} if it is
quasi--isomorphic (or, in the homotopy model, chain homotopy equivalent) to a
bounded complex of finitely generated projective objects in
${\nTGMod{T}}^{\mathrm{bi}}$.  We write $\Perf({\nTGMod{T}}^{\mathrm{bi}})$ for
the full subcategory of perfect complexes.
\end{definition}

\begin{definition}[Waldhausen structure]
\label{def:waldhausen-structure}

Equip $\Perf({\nTGMod{T}}^{\mathrm{bi}})$ with a Waldhausen structure  \cite{Waldhausen1985} by
declaring:
\begin{enumerate}[label=(\alph*)]
  \item cofibrations to be degreewise admissible monomorphisms (in the exact
  structure of \S\ref{subsec:exactness});
  \item weak equivalences to be chain homotopy equivalences (or, alternatively,
  quasi--isomorphisms in the bounded derived model fixed for the paper).
\end{enumerate}
\end{definition}

\begin{remark}
All statements in Section~\ref{sec:comparison} are formulated so that either
standard choice of weak equivalences can be used, provided it is fixed once
and for all and is compatible with the bar--resolution arguments invoked in
Lemma~\ref{lem:exact-waldhausen} and Theorem~\ref{thm:base-change}.
\end{remark}

\subsection{Flatness in positional slots and bar resolutions}
\label{subsec:bar}

Base--change and fpqc descent require a slotwise flatness hypothesis that
ensures extension of scalars behaves well on cofibrations and weak
equivalences.

\begin{definition}[$\Gamma$--flatness in slots]
\label{def:gamma-flatness}
Let $f:(T,\Gamma,\mu)\to(T',\Gamma',\mu')$ be a morphism.
We say that $f$ is \emph{$\Gamma$--flat in the $(j,k)$ profile} if $T'$
viewed as a right $T$--object in slot $k$ and as a left $T$--object in slot
$j$ preserves admissible monomorphisms under the corresponding positional
tensor functors; equivalently, positional tensoring with $T'$ is exact on the
Quillen--exact structure of ${\nTGMod{T}}^{\mathrm{bi}}$ in each relevant slot.
\end{definition}

\begin{definition}[Two--sided bar construction]
For $M\in{\nTGMod{T}}^{\mathrm{bi}}$ and a $T$--(bi--)object $T'$, the
(two--sided) \emph{positional bar complex} $B_\bullet(T',T,M)$ is the simplicial
object with
\[
B_q(T',T,M):=T'\otimes^{(j,k)}_\Gamma T^{\otimes q}\otimes^{(j,k)}_\Gamma M,
\]
faces induced by the positional multiplication/structure maps and degeneracies
by inserting units where available (or by the chosen splitting data in the
exact/Waldhausen setting).  Its realization models the derived
extension--of--scalars used in the proofs of
Theorem~\ref{thm:base-change} and Theorem~\ref{thm:fpqc}.
\end{definition}

\begin{remark}
The bar model provides the concrete route from slotwise $\Gamma$--flatness to
homotopy invariance: it identifies $T'\otimes^{(j,k)}_\Gamma(-)$ with its
left-derived functor on perfect complexes, ensuring preservation of weak
equivalences and enabling Beck--Chevalley comparisons.
\end{remark}

\subsection{$K$--theory conventions}
\label{subsec:K-conventions}

We fix once and for all the notation already used in Section~\ref{sec:comparison}.

\begin{notation}
For an exact/Waldhausen category $\mathcal{C}$, we denote its algebraic
$K$--theory spectrum by $\Kspec(\mathcal{C})$ and set
\[
K_i(\mathcal{C}) := \pi_i\,\Kspec(\mathcal{C}) \qquad (i\ge 0).
\]
When $\mathcal{C}={\nTGMod{T}}^{\mathrm{bi}}$ (or its perfect/derived variants),
we sometimes write $K_i(T)$ for brevity.
\end{notation}

\begin{remark}
In Section~\ref{sec:geometry} we also use the stable $\infty$--categorical
notation $K_\Gamma(X)\simeq K(\Perf_\Gamma(X))$.  This agrees with
$\Kspec(\Perf({\nTGMod{T}}^{\mathrm{bi}}))$ by the model comparison theorem
(Theorem~\ref{thm:model-comparison}).
\end{remark}

\subsection{Standing assumptions and imported background results}
\label{subsec:standing-assumptions}

To keep the present paper self-contained at the level of statements, we
explicitly record the external inputs used in later sections.  Proofs of
these background results are available in the companion manuscripts cited
below; here we treat them as standing hypotheses.

\begin{setup}[Global profile and derived model]
\label{setup:global-profile}
Fix once and for all integers $1\le j<k\le n$ specifying the positional
balancing profile for $\otimes^{(j,k)}_{\Ga}$.
Fix also a bounded derived model for complexes in
${\nTGMod{T}}^{\mathrm{bi}}$ in which \cite{Neeman2001} :
\begin{enumerate}[label=(\roman*)]
\item cofibrations are degreewise admissible monomorphisms in the Quillen--exact
      structure of \S\ref{subsec:exactness};
\item weak equivalences are quasi--isomorphisms (equivalently, chain homotopy
      equivalences on perfect complexes).
\end{enumerate}
All $K$--theory spectra $\Kspec(-)$ are taken with respect to this fixed model \cite{Neeman2001}.
\end{setup}

\begin{setup}[Non--commutative $\Ga$--geometry]\cite{Rosenberg1995}
\label{setup:nc-geometry}
We assume the existence of the non--commutative $\Ga$--spectrum
$\SpecGnC{T}$ together with its $\Ga$--Zariski site
$\AffnGamma$ and a theory of quasi--coherent and perfect objects as used in
\S\ref{subsec:sheafified-comparison} and Section~\ref{sec:geometry}.
More precisely, we assume:
\begin{enumerate}[label=(\roman*)]
\item for each affine $X=\SpecGnC{T}$, an exact/derived category
      $\QCoh(X)$ of quasi--coherent $\Ocal$--modules and a full subcategory
      $\Perf(X)\subset\QCoh(X)$ of perfect objects;
\item restriction/gluing functors along $\Ga$--Zariski morphisms are exact and
      preserve perfect objects;
\item affine basic opens $D(a,\boldsymbol{\gamma})$ form a basis of the topology.
\end{enumerate}
\end{setup}

\begin{proposition}[Model comparison for $K$--theory {\rm}]
\label{prop:imported-model-comparison}
For the Waldhausen category of perfect complexes
$\Perf({\nTGMod{T}}^{\mathrm{bi}})$, Quillen's $Q$--construction and
Waldhausen's $S_\bullet$--construction yield canonically equivalent
$K$--theory spectra:
\[
\Kspec_{Q}\!\big(\Perf({\nTGMod{T}}^{\mathrm{bi}})\big)
\ \simeq\
\Kspec_{S_\bullet}\!\big(\Perf({\nTGMod{T}}^{\mathrm{bi}})\big).
\]
\end{proposition}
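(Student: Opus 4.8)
The plan is to realise the claim as the $\Gamma$--linear, inverse--free instance of Waldhausen's classical comparison between the $Q$-- and $S_\bullet$--constructions \cite[\S1.9]{Waldhausen1985}, combined with a Gillet--Waldhausen-type reduction from perfect complexes to finitely generated projectives (see also \cite{Weibel2013, Neeman2001}). Writing $\mathcal{P}$ for the exact category of finitely generated projective bi--$\Gamma$--modules over $T$, I would produce a canonical zig--zag of weak equivalences of spectra
\[
\Kspec_{Q}\!\big(\Perf({\nTGMod{T}}^{\mathrm{bi}})\big)
\ \xleftarrow{\;\sim\;}\
\Kspec_{Q}(\mathcal{P})
\ \iso\
\Kspec_{S_\bullet}(\mathcal{P})
\ \iso\
\Kspec_{S_\bullet}\!\big(\Perf({\nTGMod{T}}^{\mathrm{bi}})\big),
\]
in which every arrow is natural in $T$, and then compose.

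First I would pin down the exact and Waldhausen structures carefully. The admissible monomorphisms and epimorphisms of \S\ref{subsec:exactness} extend degreewise to $\Ch({\nTGMod{T}}^{\mathrm{bi}})$, turning it into a Quillen--exact category in which, by \S\ref{subsec:derived}, $\Perf({\nTGMod{T}}^{\mathrm{bi}})$ is the exact subcategory of objects chain homotopy equivalent to bounded complexes of finitely generated projectives. Under the fixed derived model of \Cref{setup:global-profile}, weak equivalences on $\Perf$ are simultaneously quasi--isomorphisms and chain homotopy equivalences, so the Waldhausen structure of \Cref{def:waldhausen-structure} is complicial with mapping cones and cylinders available. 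This is the one genuinely delicate point in the semiring setting: cones and cylinders must be constructed purely through the cofibration/pushout calculus (pushouts along degreewise admissible monomorphisms), never by subtraction, so that the cofiber sequences underlying Additivity and Approximation remain valid in the absence of additive inverses. Granting this, the Extension, Cylinder, Saturation, and (degreewise--split) Approximation axioms hold for $\Perf({\nTGMod{T}}^{\mathrm{bi}})$.

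The two outer equivalences are then the resolution/Gillet--Waldhausen step on either side of the comparison. On the $S_\bullet$--side, the functor sending a projective module to the complex concentrated in degree $0$ induces $\Kspec_{S_\bullet}(\mathcal{P})\iso\Kspec_{S_\bullet}(\Perf({\nTGMod{T}}^{\mathrm{bi}}))$ by the Waldhausen Approximation Theorem: by the very definition of a perfect object (\S\ref{subsec:derived}) together with the functorial bar resolutions of \S\ref{subsec:bar}, every perfect complex receives a quasi--isomorphism from a bounded complex of finitely generated projectives, and such a bounded complex is obtained from projectives in single degrees by finitely many extensions along degreewise admissible monomorphisms, so the approximation hypotheses are met. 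On the $Q$--side, the analogous statement is Quillen's Resolution Theorem: $\Kspec_Q(\mathcal{P})\iso\Kspec_Q(\Perf({\nTGMod{T}}^{\mathrm{bi}}))$ because $\Perf$ is resolved by $\mathcal{P}$. The middle equivalence is Waldhausen's comparison theorem \cite[\S1.9]{Waldhausen1985} applied to the exact category $\mathcal{P}$ with isomorphisms as weak equivalences: a natural chain of maps running through the bisimplicial object $iS_\bullet\mathcal{P}$ and the nerve of the $Q$--construction realises $\Kspec_{S_\bullet}(\mathcal{P})$ and $\Kspec_{Q}(\mathcal{P})$ as canonically equivalent, and naturality of that chain in the exact category yields the ``canonical'' clause of the statement.

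The main obstacle is the verification---rather than the citation---of these input axioms in the inverse--free, slot--sensitive $\Gamma$--world: one must check that the degreewise exact structure on $\Ch({\nTGMod{T}}^{\mathrm{bi}})$ genuinely satisfies Quillen's axioms (stability of admissible monos/epis under pushout, pullback, and composition, together with the obscure axiom), that $\Perf({\nTGMod{T}}^{\mathrm{bi}})$ is idempotent--complete so that the cofinality clauses in Resolution and Approximation are unobstructed, and that finitely generated projective bi--$\Gamma$--modules are closed under the extensions needed to build all bounded complexes of projectives. I expect these to follow from the positional module bookkeeping of Section~\ref{sec:prelim} together with the bar--resolution constructions of \S\ref{subsec:bar}, after which the comparison is formal and the resulting equivalence is independent of all auxiliary choices by the built--in naturality.
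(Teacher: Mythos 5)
The paper never actually proves this proposition: it is listed in \S\ref{subsec:standing-assumptions} among the imported standing hypotheses, with the proof deferred to the companion manuscripts, so your sketch has to be judged against the standard classical argument rather than against anything in the text. Your overall shape --- Waldhausen's \S1.9 comparison on the exact category $\mathcal{P}$ of finitely generated projective bi-$\Gamma$-modules, followed by a Gillet--Waldhausen/approximation step relating $\mathcal{P}$ to perfect complexes --- is the expected route, and it matches how the paper itself formulates the comparison in Theorem~\ref{thm:model-comparison}, where the $Q$-construction is applied to projective modules and the $S_\bullet$-construction to perfect complexes.

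However, your left-hand leg is a genuine gap. Quillen's Resolution Theorem cannot give $\Kspec_{Q}(\mathcal{P})\simeq \Kspec_{Q}\big(\Perf({\nTGMod{T}}^{\mathrm{bi}})\big)$ when the $Q$-construction is applied literally to $\Perf$ as an exact category: the $Q$-construction sees only the exact structure and is blind to quasi-isomorphisms, and a perfect complex is \emph{not} resolved by degree-zero projectives through admissible short exact sequences of complexes (a complex concentrated in degree $5$ admits no such finite resolution); it is only so resolved up to quasi-isomorphism, which $Q$ does not invert. Indeed, with the degreewise exact structure and isomorphisms, additivity along the brutal truncation filtration splits the $K$-theory of bounded complexes into a sum of copies of $\Kspec(\mathcal{P})$ indexed by degrees, so the map you assert is not an equivalence. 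The only sensible reading of ``$\Kspec_Q(\Perf)$'' --- and the one the paper adopts in Theorem~\ref{thm:model-comparison} --- is ``the $Q$-construction model, i.e.\ $Q$ applied to $\mathcal{P}$''; with that reading your left arrow should simply be deleted and the proposition is the composite of your middle and right equivalences. Separately, the issue you flag and then defer --- re-proving cylinder, cone, additivity, and approximation without additive inverses (the classical constructions use signs), and idempotent completeness/cofinality to avoid a $K_0$ discrepancy in the Gillet--Waldhausen step --- is exactly where the substantive content of the proposition lies; the paper is equally silent there, but your proof is not complete until that verification is actually carried out rather than promised.
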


\begin{remark}
In your manuscript, this is the specific input referred to whenever we say
``$\Kspec$ can be taken via Quillen or Waldhausen and these are canonically
equivalent''.
\end{remark}

\begin{proposition}[Existence and basic properties of positional tensor
{\rm}]
\label{prop:imported-positional-tensor}
The positional tensor product $\otimes^{(j,k)}_{\Ga}$ exists on
${\nTGMod{T}}^{\mathrm{bi}}$, is biadditive, and is compatible with the exact
structure in the following sense: if
$0\to M'\to M\to M''\to 0$ is an exact sequence in
${\nTGMod{T}}^{\mathrm{bi}}$ and $N\in{\nTGMod{T}}^{\mathrm{bi}}$, then
\[
M'\otimes^{(j,k)}_{\Ga}N \to M\otimes^{(j,k)}_{\Ga}N \to
M''\otimes^{(j,k)}_{\Ga}N \to 0
\]
is right exact, and similarly in the second variable.
\end{proposition}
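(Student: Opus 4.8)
The plan is to prove the three assertions in turn: \emph{existence} via the explicit congruence presentation of the Definition, \emph{biadditivity} via the universal property that falls out of that presentation, and \emph{right exactness} via the tensor--Hom adjunction.

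\textbf{Existence.} First I would take the presentation of the Definition at face value: let $F(M\times N)$ be the free commutative monoid on the underlying set of $M\times N$, let $\sim$ be the smallest monoid congruence on $F(M\times N)$ containing the balance relations $(m\cdot_{k}t,\,n)\sim(m,\,t\cdot_{j}n)$, the two biadditivity families, and the $\Ga$--balance relations, and set $M\otimes^{(j,k)}_{\Ga}N:=F(M\times N)/\!\sim$. Such a smallest congruence exists because congruences on a fixed commutative monoid are closed under arbitrary intersection, so the underlying commutative monoid is well defined. The remaining point is to install a bi--$\Ga$--module structure: I would define candidate positional actions on generators by $t\cdot_{j}(m\otimes n):=(t\cdot_{j}m)\otimes n$ (the left action in slot $j$, inherited from $M$) and $(m\otimes n)\cdot_{k}t:=m\otimes(n\cdot_{k}t)$ (the right action in slot $k$, inherited from $N$), and then check that each generating relation of $\sim$ is carried to a relation of $\sim$ by these assignments. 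For the balance relations this reduces to the $n$--ary associativity/coherence identities tying the two positional actions to $\mu$; for the additivity relations it is just additivity of the actions. Hence the actions descend to the quotient, and the bi--module axioms hold on the quotient because they hold on generators. Out of this construction one reads off the universal property used below: $\Ga$--linear maps $M\otimes^{(j,k)}_{\Ga}N\to P$ correspond naturally to biadditive, $(j,k)$--balanced, $\Ga$--compatible maps $M\times N\to P$.

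\textbf{Biadditivity.} The canonical map $\beta\colon M\times N\to M\otimes^{(j,k)}_{\Ga}N$, $(m,n)\mapsto m\otimes n$, is biadditive by construction since the two additivity families are among the generators of $\sim$. For a pair of $\Ga$--linear maps $f\colon M\to M_1$ and $g\colon N\to N_1$, the composite $\beta_1\circ(f\times g)$ is biadditive and $(j,k)$--balanced (balancing is preserved because $f$ and $g$ intertwine the positional $T$--actions), so the universal property yields a unique $\Ga$--linear map $f\otimes g$ with $(f\otimes g)(m\otimes n)=f(m)\otimes g(n)$. Functoriality, together with the bilinearity identities $(f+f')\otimes g=f\otimes g+f'\otimes g$ and $f\otimes(g+g')=f\otimes g+f\otimes g'$, is then checked on the generating elements $m\otimes n$, where it follows from biadditivity of $\beta$ and the fact that sums of morphisms are computed pointwise. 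Thus $\otimes^{(j,k)}_{\Ga}$ is a biadditive bifunctor ${\nTGMod{T}}^{\mathrm{bi}}\times{\nTGMod{T}}^{\mathrm{bi}}\to{\nTGMod{T}}^{\mathrm{bi}}$.

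\textbf{Right exactness.} Given an exact sequence $0\to M'\xrightarrow{u}M\xrightarrow{v}M''\to 0$ in ${\nTGMod{T}}^{\mathrm{bi}}$ and $N\in{\nTGMod{T}}^{\mathrm{bi}}$, I would first note that $v\otimes\id_N$ is surjective: $v$ is surjective on underlying monoids (admissible epimorphisms are, by the Definition of the exact structure), hence surjective on the generators $m''\otimes n$, hence onto. For the identification $M''\otimes^{(j,k)}_{\Ga}N\cong\Coker\bigl(u\otimes\id_N\bigr)$, the cleanest route is the tensor--Hom adjunction $-\otimes^{(j,k)}_{\Ga}N\;\dashv\;\underline{\Hom}_\Gamma(N,-)$, which I would establish from the bi--$\Ga$--module structure on the internal Hom fixed in \S\ref{subsec:positional-tensor}: a left adjoint preserves all colimits, in particular cokernels, and since $M''=\Coker(u)$ by exactness this gives $M''\otimes^{(j,k)}_{\Ga}N=\Coker(u\otimes\id_N)$, which is the asserted right exactness of the first row. (Alternatively, argue on presentations: the congruence defining $M''\otimes^{(j,k)}_{\Ga}N$ is the image under $v\otimes\id$ of the congruence defining $M\otimes^{(j,k)}_{\Ga}N$, augmented by the relations $u(m')\otimes n\sim 0$; this is exactly the congruence that computes the cokernel.) The second-variable statement is the mirror argument, using $\underline{\Hom}_\Gamma(M,-)$ and the left $T$--action of $N$ in slot $j$ in place of the right $T$--action of $M$ in slot $k$.

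\textbf{Main obstacle.} The step that will take real care is the descent of the two positional $T$--actions through the congruence $\sim$, and, relatedly, the verification of the tensor--Hom adjunction in the $n$--ary positional setting: keeping track of which of the $n$ slots carries which residual action, and checking that the $n$--ary associativity/coherence axioms render each generating balance relation stable under these actions, is where the genuine content lies. One must also stay disciplined about phrasing everything through cokernels and kernel congruences rather than naive kernels, since ${\nTGMod{T}}^{\mathrm{bi}}$ is not abelian and left exactness genuinely fails here.
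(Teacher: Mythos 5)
Your argument is essentially sound, but note that the paper itself offers no proof of this statement: Proposition~\ref{prop:imported-positional-tensor} sits in \S\ref{subsec:standing-assumptions} among the ``imported background results,'' where the author explicitly defers the proofs to the companion manuscripts (\cite{PaperG}, \cite{PaperE}) and treats the proposition as a standing hypothesis. So there is no in-paper route to compare against; what you have done is supply the expected verification from the paper's own definitions, and your outline is the natural one. Existence via the smallest congruence on the free commutative monoid containing the balance and biadditivity relations matches the coequalizer description in \S\ref{subsec:positional-tensor}; descending the residual slot-$j$ and slot-$k$ actions through the congruence, and deducing the universal property for balanced biadditive maps, is exactly the content one would have to check, and you correctly identify the $n$-ary coherence axioms as the place where the work lies (keep in mind that a positional action is not a single element $t$ acting but an $(n-1)$-tuple of $T$-entries interleaved with $\Ga$-entries, so the stability-of-relations check is bulkier than your shorthand $t\cdot_j(m\otimes n)$ suggests, though not different in kind). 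Your right-exactness argument via the adjunction $-\otimes^{(j,k)}_{\Ga}N\dashv\underline{\Hom}_\Gamma(N,-)$ and preservation of cokernels by left adjoints is the clean way to phrase it in a non-abelian, cokernel-as-quotient-by-generated-congruence setting, and it uses only that $v$ is a cokernel of $u$, which the paper's definition of exact sequence guarantees; the adjunction itself is the one genuinely unverified ingredient (the paper defines the internal Hom but never states the adjunction), so in a full write-up you would need to spell out which residual positional actions $\underline{\Hom}_\Gamma(N,P)$ carries so that the correspondence of balanced maps is an isomorphism of bi-$\Ga$-modules rather than merely of sets. With that caveat made explicit, your proposal proves everything the proposition asserts.
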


\begin{proposition}[Bar resolution computes derived extension of scalars
{\rm}]
\label{prop:imported-bar}
For any morphism $f:(T,\Ga,\mu)\to(T',\Ga',\mu')$ and
$M\in{\nTGMod{T}}^{\mathrm{bi}}$, the two--sided positional bar construction
$B_\bullet(T',T,M)$ of \S\ref{subsec:bar} computes the left derived functor of
extension of scalars:
\[
\mathbf{L}f_!(M)\ \simeq\ \big|B_\bullet(T',T,M)\big|
\qquad\text{in the bounded derived category.}
\]
If $f$ is $\Ga$--flat in the $(j,k)$ profile (Definition~\ref{def:gamma-flatness}
and \S\ref{subsec:bar}), then $\mathbf{L}f_!(M)\simeq f_!(M)$ on perfect objects.
\end{proposition}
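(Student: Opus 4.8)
The plan is to run the classical two--sided bar--resolution argument, tracking the positional slots carefully and replacing abelian/additive reasoning by the exact--Waldhausen structure fixed in Setup~\ref{setup:global-profile}. \textbf{Step A (a resolution over $T$).} First I would form the bar complex $B_\bullet(T,T,M)$ with $B_q(T,T,M)=T^{\otimes(q+1)}\otimes^{(j,k)}_\Gamma M$ together with the augmentation $\varepsilon\colon |B_\bullet(T,T,M)|\to M$ induced by the left positional action of $T$ on $M$ in slot $j$. Using the units (or the chosen splitting data of \S\ref{subsec:bar}, as recorded in Setup~\ref{setup:global-profile}) one produces extra degeneracies $s_{-1}$ exhibiting the augmented simplicial object as a split, simplicially contractible one; hence $\varepsilon$ is a weak equivalence in the fixed derived model, i.e.\ $|B_\bullet(T,T,M)|$ is a resolution of $M$. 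Since ${\nTGMod{T}}^{\mathrm{bi}}$ is not additive, I would phrase this contraction as an honest simplicial homotopy equivalence (hence preserved by any exact functor) rather than as a chain contraction, and invoke the Dold--Puppe / realization comparison valid in the fixed bounded model.

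\textbf{Step B (comparison with the target bar complex).} Applying the underived extension of scalars $f_!=T'\otimes^{(j,k)}_\Gamma(-)$ degreewise and using the canonical isomorphism $T'\otimes^{(j,k)}_\Gamma T\cong T'$ (absorption of one bar factor in the slots $(j,k)$), one obtains a natural isomorphism of simplicial objects $f_!\,B_\bullet(T,T,M)\cong B_\bullet(T',T,M)$, hence $|B_\bullet(T',T,M)|\cong|f_!B_\bullet(T,T,M)|$; checking that this isomorphism respects faces and degeneracies is routine from the definitions of the positional structure maps.

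\textbf{Step C (identifying the derived functor).} It remains to see that $|f_!B_\bullet(T,T,M)|$ computes $\mathbf{L}f_!(M)$. For this I would argue that each $B_q(T,T,M)$ is $f_!$--acyclic: it is positionally free (induced) in slot $j$, being of the form $T\otimes^{(j,k)}_\Gamma W_q$ with $W_q=T^{\otimes q}\otimes^{(j,k)}_\Gamma M$, and induced $\Gamma$--modules have vanishing higher positional $\mathrm{Tor}$ against $T'$; combined with Proposition~\ref{prop:imported-positional-tensor} (right exactness of the positional tensor) this shows that the resolution $|B_\bullet(T,T,M)|\xrightarrow{\ \sim\ }M$ remains a weak equivalence after applying $f_!$ in the derived sense, so $\mathbf{L}f_!(M)\simeq|f_!B_\bullet(T,T,M)|\simeq|B_\bullet(T',T,M)|$. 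I expect this acyclicity of induced positional modules to be the main obstacle: in the semiring setting the implication ``free $\Rightarrow$ projective'' can fail, so rather than invoking projectivity I would establish it by a Shapiro--type base--change computation, resolving $W_q$ itself by an iterated bar complex and using the transitivity isomorphism $B_\bullet\bigl(T',T,T\otimes^{(j,k)}_\Gamma W\bigr)\simeq B_\bullet(T',T',W)\simeq W$, or simply cite the corresponding companion result where it is available.

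\textbf{Step D (the flat case).} The second assertion is then immediate: if $f$ is $\Gamma$--flat in the $(j,k)$ profile (Definition~\ref{def:gamma-flatness}), then $f_!$ is exact for the Quillen--exact structure of \S\ref{subsec:exactness}, hence preserves the quasi--isomorphism $|B_\bullet(T,T,M)|\to M$; therefore $\mathbf{L}f_!(M)\simeq|B_\bullet(T',T,M)|\simeq f_!(M)$, and in particular on perfect complexes the derived and underived extensions of scalars coincide. Apart from the acyclicity point flagged in Step C, the remaining work is bookkeeping: verifying that the bar differentials and the extra degeneracies are well defined in the fixed $(j,k)$ profile and compatible with the $\Gamma$--balancing relations.
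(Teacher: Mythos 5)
The paper does not actually prove this statement: Proposition~\ref{prop:imported-bar} is listed in \S\ref{subsec:standing-assumptions} among the \emph{imported} background results, treated as a standing hypothesis whose proof is deferred to the companion manuscripts (\cite{PaperG}, \cite{PaperE}). So there is no internal argument to compare yours against; what you wrote is an attempt to supply the missing proof, and its outline (extra degeneracies contracting the augmented bar object $B_\bullet(T,T,M)\to M$; the absorption $T'\otimes^{(j,k)}_\Gamma T\cong T'$ identifying $f_!\,B_\bullet(T,T,M)$ with $B_\bullet(T',T,M)$; acyclicity of the bar terms identifying the realization with $\mathbf{L}f_!(M)$) is indeed the standard route one would expect such a proof to take.

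That said, your Step C is a genuine gap, not bookkeeping. The claim that induced objects $T\otimes^{(j,k)}_\Gamma W_q$ have vanishing higher positional Tor against $T'$ is exactly the mathematical content of the proposition, and it is not automatic: classically the two-sided bar complex computes the derived functor \emph{relative to the base} (here $\Gamma$), and the absolute statement needs either flatness-type hypotheses on $T$ over that base or a derived model in which bar-type (relative) resolutions are decreed to be cofibrant replacements. Your proposed repair is close to circular: the transitivity identification $B_\bullet\bigl(T',T,T\otimes^{(j,k)}_\Gamma W\bigr)\simeq B_\bullet(T',T',W)\simeq W$ invokes the augmented bar equivalence and the derived-functor identification for $W$, i.e., instances of the very statement being proven, while ``cite the companion result'' is precisely what the paper itself does. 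Two smaller points also lean entirely on the unstated details of Setup~\ref{setup:global-profile}: (i) the extra degeneracies require units or the splitting data that \S\ref{subsec:bar} only postulates, and in the non-additive semimodule setting the passage from a split augmented simplicial object to a weak equivalence of realizations is not the usual Dold--Kan argument, so ``invoke the realization comparison'' hides real work; (ii) in Step D, ``exact $\Rightarrow$ preserves quasi-isomorphisms of bounded complexes'' is itself nontrivial over semirings (it is essentially Lemma~\ref{lem:exact-waldhausen}, whose own justification appeals to the bar model), and note the paper only asserts $\mathbf{L}f_!\simeq f_!$ on perfect objects, which is the safer scope than your claim for all $M$. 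In short, your plan has the right shape, but as written it does not close the acyclicity step that the paper leaves to the companion manuscripts.
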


\begin{proposition}[Derived Morita equivalence from a tilting bimodule complex \cite{Neeman2001}
{\rm}]
\label{prop:imported-tilting}
Assume there exists a tilting bi--module complex
$\Ecal\in \mathbf{D}\big({\nTGMod{T\text{--}T'}}^{\mathrm{bi}}\big)$ inducing an
exact equivalence of triangulated categories
\[
\Phi:\ \mathbf{D}\!\big({\nTGMod{T}}^{\mathrm{bi}}\big)
\ \xrightarrow{\ \simeq\ }\
\mathbf{D}\!\big({\nTGMod{T'}}^{\mathrm{bi}}\big).
\]
Then $\Phi$ restricts to an equivalence on perfect objects and admits a
quasi--inverse induced by a dual tilting complex.
\end{proposition}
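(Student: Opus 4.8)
\emph{Strategy.} The plan is to run the two--sided tilting argument (in the spirit of Rickard and Keller), transported to the Quillen--exact, positional $\Ga$--setting, using the two--sided bar construction of \S\ref{subsec:bar} to model all derived tensor products. Write $\mathbf{D}_T\defeq\mathbf{D}\!\big({\nTGMod{T}}^{\mathrm{bi}}\big)$ and $\mathbf{D}_{T'}\defeq\mathbf{D}\!\big({\nTGMod{T'}}^{\mathrm{bi}}\big)$. First I would identify the perfect objects intrinsically: since a finitely generated projective object of ${\nTGMod{T}}^{\mathrm{bi}}$ is a retract of a finite direct sum of copies of the regular bi--$\Ga$--module $T$ (the rank--one free object in the fixed $(j,k)$ profile), the perfect complexes of \S\ref{subsec:derived} are precisely the thick subcategory $\mathrm{thick}\langle T\rangle\subseteq\mathbf{D}_T$, and likewise $\Perf({\nTGMod{T'}}^{\mathrm{bi}})=\mathrm{thick}\langle T'\rangle\subseteq\mathbf{D}_{T'}$. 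I would also record what the tilting hypothesis provides: $\Phi$ is, up to natural isomorphism, the positional derived functor induced by $\Ecal$; it has a quasi--inverse $\Psi$, given by a right adjoint (internal Hom against $\Ecal$, up to normalization); and $\Phi$ sends the regular $T$--module to a $T'$--perfect complex (to $\Ecal$ itself, or to its one--sided $\Ga$--linear dual, depending on the normalization chosen for $\Phi$).

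\emph{Restriction to perfect objects.} Next I would argue as follows. An exact equivalence of triangulated categories preserves zero objects, shifts, cones, and retracts, hence preserves thick closures; therefore $\Phi\big(\Perf({\nTGMod{T}}^{\mathrm{bi}})\big)=\mathrm{thick}\langle\Phi(T)\rangle$, which lies in $\Perf({\nTGMod{T'}}^{\mathrm{bi}})$ because $\Phi(T)$ is $T'$--perfect. Running the same argument for $\Psi$ gives $\Psi\big(\Perf({\nTGMod{T'}}^{\mathrm{bi}})\big)\subseteq\Perf({\nTGMod{T}}^{\mathrm{bi}})$, using that $\Psi(T')$ is $T$--perfect (equivalently, that the quasi--inverse of $\Phi$ again preserves compactness). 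Since $\Phi$ and $\Psi$ are mutually quasi--inverse, applying $\Phi$ to the second inclusion yields the reverse containment $\Perf({\nTGMod{T'}}^{\mathrm{bi}})\subseteq\Phi\big(\Perf({\nTGMod{T}}^{\mathrm{bi}})\big)$, so $\Phi$ restricts to an essentially surjective functor between the perfect subcategories; being the restriction of a fully faithful functor to corresponding full subcategories, it is an equivalence, with quasi--inverse the restriction of $\Psi$.

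\emph{Quasi--inverse via a dual tilting complex.} It then remains to exhibit $\Psi$ as induced by a dual tilting complex. Here I would set $\Ecal^{\vee}$ to be the one--sided $\Ga$--linear dual $\mathbf{R}\underline{\Hom}_\Ga(\Ecal,-)$ of $\Ecal$ evaluated at the relevant structure object, carrying its residual bi--$\Ga$--module structure in the $(j,k)$ profile. Because $\Ecal$ is one--sided perfect, dualization is strong: $\Ecal^{\vee}$ is again a perfect bimodule complex and the canonical biduality map $\Ecal\to(\Ecal^{\vee})^{\vee}$ is a quasi--isomorphism. The standard identification of the right adjoint of a derived tensor functor with tensoring by the dual then gives $\Psi\simeq\Ecal^{\vee}\otimes^{\mathbf{L},(j,k)}_{\Ga}(-)$, and, computing the composites $\Psi\Phi$ and $\Phi\Psi$ through the two--sided bar resolution (Proposition~\ref{prop:imported-bar}), I would read off the Rickard--type identities $\Ecal\otimes^{\mathbf{L},(j,k)}_{\Ga}\Ecal^{\vee}\simeq T$ and $\Ecal^{\vee}\otimes^{\mathbf{L},(j,k)}_{\Ga}\Ecal\simeq T'$ in the appropriate bimodule derived categories, the evaluation and coevaluation maps being quasi--isomorphisms precisely by one--sided perfectness and assembling into the unit and counit of the adjunction. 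Thus $\Ecal^{\vee}$ is a tilting bi--module complex inducing $\Psi$, and passing to perfect objects delivers the asserted quasi--inverse.

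\emph{Main obstacle.} The formal bookkeeping above is routine once its inputs are available; the real content sits in those inputs, and is where I expect the difficulty. One must know that in the non--abelian, semiring, positional setting the bounded derived model of \S\ref{subsec:derived} admits the finitely generated projective resolutions that identify $\Perf$ with $\mathrm{thick}\langle T\rangle$, and that perfectness is preserved in both directions under $\Phi$ --- equivalently, that a two--sided tilting complex is one--sided perfect on each side, or that the perfect objects coincide with the compact objects of $\mathbf{D}_T$ so that both $\Phi$ and its inverse preserve them. Over ordinary rings these facts are classical (Rickard; Neeman; Thomason--Trobaugh), but here they must be re--derived from the admissible--mono/epi structure of \S\ref{subsec:exactness} together with the bar model of \S\ref{subsec:bar}, and that re--derivation is the genuine work. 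A second, lower--order nuisance is keeping the positional $(j,k)$--balancing consistent throughout: the residual bimodule structures on $\Ecal$ and $\Ecal^{\vee}$, and every occurrence of $\otimes^{(j,k)}_\Ga$ and $\underline{\Hom}_\Ga$, must use matching slots so that the relevant composites are defined; with the profile fixed as in Setup~\ref{setup:global-profile} this is pure bookkeeping, but it absorbs most of the care in a full write--up.
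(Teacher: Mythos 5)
There is nothing in the paper to compare your argument against: Proposition~\ref{prop:imported-tilting} lives in \S\ref{subsec:standing-assumptions}, whose stated purpose is to record \emph{imported} background inputs whose proofs are deferred to the companion manuscripts (\cite{PaperG}, \cite{PaperE}) and standard references; the paper treats it as a standing hypothesis and offers no argument at all. So your write--up is not an alternative route to the paper's proof --- it is the only proof on the table, and it should be judged on its own terms. On those terms, the skeleton is the expected Rickard/Keller one and the formal part (thick--subcategory bookkeeping, equivalences preserve retracts, shifts and cones, hence thick closures) is fine.

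The genuine gap is the one you half--acknowledge but then lean on anyway: the hypothesis of the proposition, as stated, gives only that $\Ecal$ induces an exact equivalence $\Phi$ of the derived categories. It does \emph{not} give you that $\Phi(T)$ is $T'$--perfect, nor that the quasi--inverse sends $T'$ to a $T$--perfect object; you insert this as ``what the tilting hypothesis provides,'' but the paper never defines ``tilting bi--module complex,'' so either you are silently strengthening the hypothesis (building one--sided perfectness into the definition) or you must prove preservation of perfectness intrinsically. The classical intrinsic route --- perfect $=$ compact in the unbounded derived category, and equivalences preserve compacts --- is not available here without first establishing that $\mathbf{D}({\nTGMod{T}}^{\mathrm{bi}})$ (a derived category of a Quillen--exact, non--abelian semiring module category, fixed in the \emph{bounded} model of Setup~\ref{setup:global-profile}) has enough finitely generated projectives, that $\Perf=\mathrm{thick}\langle T\rangle$, and that this agrees with compactness in a suitable unbounded enlargement; none of this is proved in the paper, and Proposition~\ref{prop:imported-bar} alone does not deliver it. A second unproved input in your last step: the identification of the right adjoint $\Psi$ with $\Ecal^{\vee}\otimes^{\mathbf{L},(j,k)}_{\Ga}(-)$ and the biduality quasi--isomorphism require a tensor--Hom adjunction for $\otimes^{(j,k)}_{\Ga}$ against $\underline{\Hom}_\Ga$ in the positional setting, which the paper nowhere states (Proposition~\ref{prop:imported-positional-tensor} records only right--exactness). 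As a conditional sketch, modulo these two inputs, your argument is the right one; as a proof of the proposition as literally stated, it does not yet close.
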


\begin{proposition}[Non--commutative Wedderburn--Artin devissage input
{\rm}]
\label{prop:imported-WA}
If $T$ admits a (finite) semisimple decomposition into matrix blocks in the
sense assumed in \S\ref{subsec:consequences}, then the exact category
${\nTGMod{T}}^{\mathrm{bi}}$ admits a devissage description compatible with
$\Kspec(-)$, yielding product decompositions of $K$--theory spectra along the
semisimple strata.
\end{proposition}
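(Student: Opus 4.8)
The plan is to deduce the statement from the exact--category form of Quillen's dévissage theorem~\cite{Quillen1973}, combined with the finite--product additivity of $\Kspec(-)$ and the derived Morita reduction of Proposition~\ref{prop:imported-tilting}.

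First I would record the categorical consequences of the hypothesised block decomposition. A semisimple decomposition of $T$ into matrix blocks supplies an orthogonal family of block ``idempotents'' acting centrally in both balancing slots $j$ and $k$, hence a decomposition
\[
\mathcal{S}\ \simeq\ \prod_{i=1}^{r}\mathcal{S}_i
\]
of the full subcategory $\mathcal{S}\subset{\nTGMod{T}}^{\mathrm{bi}}$ of semisimple bi--finite slot--sensitive $\Gamma$--modules, where $\mathcal{S}_i$ consists of those objects supported on the $i$--th block $M_{n_i}(D_i)$. One then checks that $\mathcal{S}$ is a full exact subcategory of ${\nTGMod{T}}^{\mathrm{bi}}$ in the sense of \S\ref{subsec:exactness}, closed under admissible subobjects, admissible quotients and extensions (the last automatic by semisimplicity, once the congruence--kernel/cokernel description of admissible monos and epis is seen to restrict to $\mathcal{S}$), and that the positional actions in slots $j$ and $k$ descend to every simple subquotient.

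Next I would establish the dévissage input proper: every object $M$ of ${\nTGMod{T}}^{\mathrm{bi}}$ admits a finite filtration $0=M_0\subset M_1\subset\cdots\subset M_\ell=M$ by admissible monomorphisms whose successive quotients lie in $\mathcal{S}$ (in the genuinely semisimple case one may take $\ell=1$, but the filtered formulation is what the $Q$--construction argument consumes). Running Quillen's dévissage argument in the Quillen--exact rather than abelian setting is legitimate here because the $S_\bullet$-- and $Q$--models agree by Proposition~\ref{prop:imported-model-comparison} and the auxiliary category of admissible filtrations and its realization are built only out of admissible monomorphisms and epimorphisms; it yields a weak equivalence $\Kspec(\mathcal{S})\iso\Kspec\big({\nTGMod{T}}^{\mathrm{bi}}\big)$.

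Finally I would assemble the product decomposition. From $\mathcal{S}\simeq\prod_i\mathcal{S}_i$ and the additivity of $\Kspec(-)$ over finite products of exact/Waldhausen categories (immediate from the $S_\bullet$--construction) one obtains $\Kspec(\mathcal{S})\simeq\prod_i\Kspec(\mathcal{S}_i)$; each $\mathcal{S}_i$ is the category of semisimple bi--modules over the matrix block $M_{n_i}(D_i)$, and the column/row bimodule complex is a tilting object in the sense of Proposition~\ref{prop:imported-tilting}, so $\Kspec(\mathcal{S}_i)\simeq\Kspec\big({\nTGMod{D_i}}^{\mathrm{bi}}\big)$. Composing the three equivalences gives
\[
\Kspec\big({\nTGMod{T}}^{\mathrm{bi}}\big)\ \simeq\ \prod_{i=1}^{r}\Kspec\big({\nTGMod{D_i}}^{\mathrm{bi}}\big),
\]
the asserted product decomposition along the semisimple strata. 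I expect the main obstacle to be not the formal assembly but the compatibility check in the first step: verifying that the block idempotents are central for \emph{both} the slot--$j$ and the slot--$k$ positional action at once, so that $\mathcal{S}$ is genuinely an exact subcategory and its simple objects are honestly ``block--pure'' as bi--$\Gamma$--modules. In the $n$--ary slot--sensitive world this is exactly the point at which the structure theory hypothesised in the statement must be invoked rather than pure category theory.
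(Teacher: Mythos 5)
A preliminary point of comparison: the paper never proves this proposition. It is recorded in \S\ref{subsec:standing-assumptions} as an \emph{imported} standing assumption whose proof is deferred to the companion manuscripts (\cite{PaperG}, \cite{PaperE}), and it is then consumed only in Corollary~\ref{cor:devissage}. So there is no internal argument to measure yours against; your proposal has to stand on its own, and as written it has one genuine gap.

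The gap is the step you pass over most quickly: invoking Quillen's devissage theorem in the Quillen--exact, non--abelian setting. Unlike additivity or resolution, devissage is \emph{not} a theorem about exact categories: Quillen's proof \cite{Quillen1973} applies Theorem A to the map of $Q$--constructions and shows the relevant fiber categories are contractible using genuinely abelian facts --- images, intersection of an admissible filtration with a subobject, pullback of filtrations along admissible monos. For general exact categories the statement fails, and here the situation is more delicate still, since ${\nTGMod{T}}^{\mathrm{bi}}$ consists of semimodule--like objects whose admissible monos and epis are defined via kernel congruences and cokernels, so the filtration--manipulation arguments have no automatic analogue. Your stated justification --- that the $Q$-- and $S_\bullet$--models agree (Proposition~\ref{prop:imported-model-comparison}) and that the auxiliary filtration category uses only admissible monos and epis --- does not touch this issue: model comparison says nothing about contractibility of Quillen's fiber categories. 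Two repairs suggest themselves. Either argue that under the semisimplicity hypothesis \emph{every} object of ${\nTGMod{T}}^{\mathrm{bi}}$ is semisimple, so that $\mathcal{S}$ is the whole category, no devissage is needed, and the block idempotents decompose the entire exact category as a finite product --- after which additivity of $\Kspec(-)$ and the Morita input of Proposition~\ref{prop:imported-tilting} give the product decomposition exactly as in your final step; or else construct a bespoke devissage argument adapted to the positional semiring setting, which is precisely the structure--theoretic content the paper outsources to \cite{PaperG}. Your closing worry about the block idempotents being central in both slots $j$ and $k$ is legitimate, but it is secondary to the devissage issue.
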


\begin{proposition}[Descent for perfect objects on $\SpecGnC{T}$ {\rm}]
\label{prop:imported-descent-perf}
For any $\Ga$--Zariski cover $\{U_i\to X\}$ of $X=\SpecGnC{T}$, perfect objects
glue and satisfy hyperdescent:
\[
\Perf(X)\ \simeq\
\lim\Big(\prod\nolimits_i\Perf(U_i)\rightrightarrows
\prod\nolimits_{i,j}\Perf(U_i\times_XU_j)\triplearrows\cdots\Big).
\]
If $f:T\to T'$ is $\Ga$--flat and induces an fpqc cover
$p:\SpecGnC{T'}\to\SpecGnC{T}$, then the same holds for the \v{C}ech nerve of $p$.
\end{proposition}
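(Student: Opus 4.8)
The plan is to deduce hyperdescent for $\Perf$ from flat descent for quasi--coherent $\Ga$--modules, following the classical Thomason--Trobaugh strategy for descent of perfect complexes adapted to the $n$--ary $\Ga$--semiring setting, and then to promote Čech descent to hyperdescent using that perfect complexes are cohomologically bounded.

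\textbf{Step 1.} First I would establish flat descent for $\QCoh$: for a $\Ga$--Zariski cover $\{U_i\to X\}$ of $X=\SpecGnC{T}$,
\[
\QCoh(X)\ \xrightarrow{\ \simeq\ }\ \lim\Big(\prod\nolimits_i\QCoh(U_i)\ \rightrightarrows\ \prod\nolimits_{i,j}\QCoh(U_i\times_XU_j)\ \triplearrows\ \cdots\Big).
\]
Since the basic opens $D(a,\boldsymbol{\gamma})$ form a basis (Setup~\ref{setup:nc-geometry}), it suffices to treat an affine cover $U_i=\SpecGnC{T_i}$ with each $T\to T_i$ a $\Ga$--flat localization and with the family $\{T\to T_i\}$ jointly conservative on modules; joint conservativity holds because every point of $\SpecGnC{T}$ lies in some $D(a,\boldsymbol{\gamma})$. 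Restriction of scalars along such a map is exact by $\Ga$--flatness, so the family both reflects isomorphisms and preserves the relevant (co)limits, which is exactly the input of the (Quillen--exact, resp.\ $\infty$--categorical) Beck monadicity theorem for the descent comonad $\prod_i\big(T_i\otimes^{(j,k)}_{\Ga}(-)\big)$. By Proposition~\ref{prop:imported-bar}, $\Ga$--flatness makes the two--sided positional bar complex $B_\bullet(T_i,T,M)$ a resolution, so the comonad really is the comonad of a faithfully flat cover and the comparison functor is an equivalence; running the same argument along the iterated Čech nerve gives Čech descent for $\QCoh$.

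\textbf{Steps 2--3.} Next I would show that perfection is local: an object of $\QCoh(X)$ is perfect iff its restriction to each member of a $\Ga$--Zariski (or fpqc) cover is perfect, using that a complex is perfect iff it is dualizable in $\QCoh$ iff it is a retract of a bounded complex of finitely generated projectives, and that both conditions descend along a $\Ga$--flat, jointly conservative cover. Hence $\lim_i\Perf(U_i)$, as a full subcategory of $\lim_i\QCoh(U_i)\simeq\QCoh(X)$, is precisely $\Perf(X)$, and restricting the equivalence of Step~1 gives the displayed limit in the statement at the level of Čech nerves. To upgrade to hyperdescent I would use cohomological boundedness of perfect complexes: the totalization computing descent along a hypercover is a homotopy limit of a cosimplicial object uniformly truncated on each cohomology sheaf, so the descent spectral sequence converges and the higher derived limits $\varprojlim^{s}$ vanish in the relevant range; equivalently, one invokes that the $\Ga$--Zariski topos of $\SpecGnC{T}$ is hypercomplete under the standing finiteness assumptions of Setup~\ref{setup:nc-geometry}. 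For the fpqc case, if $f:T\to T'$ is $\Ga$--flat and $p:\SpecGnC{T'}\to\SpecGnC{T}$ is the induced fpqc cover, the Čech nerve of $p$ is the simplicial affine whose terms are spectra of the iterated positional tensor powers $T'\otimes^{(j,k)}_{\Ga}\cdots\otimes^{(j,k)}_{\Ga}T'$ over $T$, resolved by the bar construction $B_\bullet(T',T,T')$ of \S\ref{subsec:bar}; faithful flatness of $p$ gives joint conservativity of restriction and $\Ga$--flatness gives exactness, so Step~1 applies verbatim and Steps 2--3 then deliver the asserted hyperdescent along the Čech nerve of $p$.

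\textbf{Main obstacle.} The crux is Step~1: making Beck monadicity work in the semiring setting, where $\otimes^{(j,k)}_{\Ga}$ is only right exact in general (Proposition~\ref{prop:imported-positional-tensor}). The $(j,k)$--profile $\Ga$--flatness hypothesis is exactly what promotes right exactness to exactness and turns the positional bar complex into a genuine resolution; the geometric point to verify carefully is that a $\Ga$--Zariski cover is ``jointly faithfully flat'' in the positional sense, i.e.\ that no nonzero bi--$\Ga$--module over $T$ restricts to zero on every basic open $D(a,\boldsymbol{\gamma})$, together with the convergence/boundedness input needed to pass from Čech descent to hyperdescent.
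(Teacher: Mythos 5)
The paper itself does not prove Proposition~\ref{prop:imported-descent-perf}: it is recorded in \S\ref{subsec:standing-assumptions} as an \emph{imported} standing hypothesis whose proof is deferred to the companion manuscripts (\cite{PaperG}, \cite{PaperE}). So there is no in-paper argument to compare with, and your proposal is an independent reconstruction along the classical Thomason--Trobaugh/Lurie route: faithfully flat (comonadic) descent for $\QCoh$, locality of perfection, then a boundedness/hypercompleteness argument to upgrade \v{C}ech descent to hyperdescent. That is the right overall shape, but as written it has genuine gaps exactly at the points where the $\Ga$-semiring setting departs from schemes.

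Concretely: (i) Step 1 leans on a ``Beck monadicity theorem'' for the descent comonad $\prod_i\bigl(T_i\otimes^{(j,k)}_{\Ga}(-)\bigr)$ on ${\nTGMod{T}}^{\mathrm{bi}}$ or its derived category. These categories are not abelian, and comonadic (effective) descent for modules over semirings is not an off-the-shelf theorem; Proposition~\ref{prop:imported-bar} only identifies the bar complex with $\mathbf{L}f_!$, it does not give effectivity of descent data, and Barr--Beck(--Lurie) additionally requires that the comparison functors preserve the relevant totalizations (or split cosimplicial objects), which you never verify. (ii) The ``joint faithful flatness'' of a $\Ga$-Zariski cover --- that no nonzero bi-$\Ga$-module restricts to zero on every $D(a,\boldsymbol{\gamma})$ --- is correctly flagged as the crux, but it is asserted, not proved; Setup~\ref{setup:nc-geometry} gives only a basis of basic opens and says nothing about the support/localization theory needed here. (iii) The criterion ``perfect iff dualizable'' is delicate when $\otimes^{(j,k)}_{\Ga}$ is a positional, generally non-symmetric product: for ordinary non-commutative rings dualizable bimodule complexes do not coincide with perfect complexes, so you must either specify the monoidal category in which duals are taken and prove the equivalence with the retract-of-finitely-generated-projectives definition, or argue directly that the latter property descends (which itself needs splitting of idempotents after descent). (iv) The passage from \v{C}ech descent to hyperdescent invokes uniform cohomological boundedness or hypercompleteness ``under the standing finiteness assumptions'', but Setup~\ref{setup:nc-geometry} contains no finite-dimensionality or noetherian-type hypothesis, so this step is unsupported as stated. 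In short, the skeleton is the standard and reasonable one, but items (i)--(iv) are precisely the content that the companion papers are supposed to supply, and your sketch does not yet close any of them.
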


\begin{remark}[How these are used]
Propositions~\ref{prop:imported-bar} and \ref{prop:imported-descent-perf} are
the precise background inputs behind the ``Idea'' proofs of
Theorems~\ref{thm:base-change}, \ref{thm:zariski-descent}, and \ref{thm:fpqc}.
Proposition~\ref{prop:imported-tilting} is the imported ingredient behind the
Morita invariance Theorem~\ref{thm:morita-K}.
\end{remark}

\begin{remark}[Citations placeholder]
Replace the parenthetical word ``Imported'' above by explicit citations in your
\texttt{refs.bib}, e.g.\ cite the companion manuscripts (\cite{PaperG}, \cite{PaperE}) and
standard references for Quillen/Waldhausen/Barwick as appropriate.
\end{remark}

\section{Comparison and Base-Change Theorems}
\label{sec:comparison}

Throughout this section $(T,+,\Ga,\mu)$ and $(T',+,\Ga',\mu')$ are
(non-commutative) $n$-ary $\Ga$-semirings in the sense of \S\ref{sec:prelim}.
We work with the Quillen-exact categories
${\nTGMod{T}}^{\mathrm{bi}}$ and ${\nTGMod{T'}}^{\mathrm{bi}}$
(\S\ref{subsec:modules}, \S\ref{subsec:exactness}), and with their Waldhausen
structures induced by cofibrations = admissible monos and weak
equivalences = homotopy equivalences (or quasi-isomorphisms on bounded
complexes), as fixed in \S\ref{subsec:derived}.  The algebraic $K$-theory
spectrum of an exact/Waldhausen category~$\Ccal$ is denoted
\[
\Kspec(\Ccal),
\qquad
K_i(\Ccal) := \pi_i\,\Kspec(\Ccal)\ (i\ge 0),
\]
where $\Kspec$ can be taken via Quillen's $Q$-construction or Waldhausen's
$S_\bullet$-construction (\ref{prop:imported-model-comparison}) and  proves these are
canonically equivalent in our setting \cite{Weibel2013}.

\subsection{Change of scalars and exactness}
\label{subsec:change-of-scalars}

Let $f:(T,\Ga,\mu)\to (T',\Ga',\mu')$ be a morphism of $n$-ary $\Ga$-semirings
(\S\ref{sec:prelim}).  The underlying data induces:
\begin{enumerate}[label=(\alph*)]
  \item an \emph{extension of scalars} exact functor
  \[
    f_{!}: {\nTGMod{T}}^{\mathrm{bi}} \longrightarrow {\nTGMod{T'}}^{\mathrm{bi}},
    \qquad
    M \longmapsto T'\otimes^{(j,k)}_{\Ga} M,
  \]
  where the positional tensor balances the $j$/$k$-slot actions and the
  $\Ga\to\Ga'$ map (cf.\ \S\ref{subsec:positional-tensor});
  \item a \emph{restriction of scalars} exact functor
  \[
    f^{\!*}: {\nTGMod{T'}}^{\mathrm{bi}} \longrightarrow {\nTGMod{T}}^{\mathrm{bi}},
  \]
  obtained by transport of structure along $f$.  There is a natural
  adjunction $f_{!}\dashv f^{\!*}$.
\end{enumerate}

\begin{lemma}[Exactness and Waldhausen compatibility]
\label{lem:exact-waldhausen}
If $f$ is $\Ga$-flat in each positional slot (i.e.\ $T'$ is $(j)$-flat as a
right $T$-object and $(k)$-flat as a left $T$-object in the sense of
\S\ref{subsec:bar}), then $f_{!}$ preserves admissible monos, admissible
epis, and weak equivalences of bounded complexes.  Consequently, $f_{!}$
is an exact/Waldhausen functor and induces a map of $K$-theory spectra
\[
  \Kspec(f_{!}):\ \Kspec\!\big({\nTGMod{T}}^{\mathrm{bi}}\big)
     \longrightarrow
     \Kspec\!\big({\nTGMod{T'}}^{\mathrm{bi}}\big).
\]
\end{lemma}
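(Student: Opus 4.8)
The plan is to verify the three preservation properties — admissible monos, admissible epis, weak equivalences — separately, then invoke the standard functoriality of Waldhausen $K$-theory. First I would handle admissible epimorphisms: by Proposition~\ref{prop:imported-positional-tensor}, positional tensoring with $T'$ is right exact in each variable, so if $0\to M'\to M\xrightarrow{v} M''\to 0$ is exact in ${\nTGMod{T}}^{\mathrm{bi}}$ then $f_!(v)$ is an admissible epimorphism (the cokernel of $f_!(u)$) automatically, \emph{without any flatness hypothesis}. Next, for admissible monomorphisms I would use the $\Ga$-flatness assumption directly: Definition~\ref{def:gamma-flatness} states precisely that $T'$, viewed with its slot-$j$ left and slot-$k$ right $T$-structures, preserves admissible monos under positional tensoring; combined with right exactness this upgrades the short exact sequence above to an exact sequence $0\to f_!(M')\to f_!(M)\to f_!(M'')\to 0$, so $f_!(u)$ is an admissible mono. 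Applying this degreewise to a degreewise-admissible-mono of complexes shows $f_!$ preserves cofibrations in the Waldhausen structure of Definition~\ref{def:waldhausen-structure}.

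For weak equivalences the key device is the two-sided positional bar construction of \S\ref{subsec:bar} together with Proposition~\ref{prop:imported-bar}. The point is that $\Ga$-flatness in the $(j,k)$ profile forces $f_!$ to agree with its left derived functor $\mathbf{L}f_!$ on perfect complexes, i.e.\ $\mathbf{L}f_!(M)\simeq f_!(M)$; since $\mathbf{L}f_!$ by construction sends quasi-isomorphisms to quasi-isomorphisms (it is a derived functor), so does $f_!$ on perfect objects. In the chain-homotopy model one argues more directly: $f_!$ is an additive functor, hence preserves chain homotopies (it takes a homotopy $h$ to $f_!(h)$, and commutes with the differentials and identities appearing in the homotopy relation), so it sends chain homotopy equivalences to chain homotopy equivalences unconditionally — here the flatness is only needed to guarantee that the bounded-complex-of-f.g.-projectives model is preserved and that the quasi-isomorphism notion of weak equivalence coincides with the homotopy one, as fixed in Setup~\ref{setup:global-profile}. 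I would remark that both choices of weak equivalence give the conclusion, consistent with the standing convention.

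Having established that $f_!$ is exact and carries cofibrations to cofibrations and weak equivalences to weak equivalences, it is an exact functor of Waldhausen categories in the sense of \S\ref{subsec:derived}; functoriality of the $S_\bullet$-construction (equivalently, of the $Q$-construction via Proposition~\ref{prop:imported-model-comparison}) then yields the induced map of spectra $\Kspec(f_!)$, which is the assertion. The main obstacle I anticipate is the weak-equivalence step in the \emph{quasi-isomorphism} model: preserving quasi-isomorphisms is genuinely a flatness phenomenon (a non-flat $f_!$ can create homology), and the clean way to see it is exactly the bar-resolution identification $f_!\simeq\mathbf{L}f_!$ on perfect complexes from Proposition~\ref{prop:imported-bar}; without that imported input one would have to run a spectral-sequence or mapping-cone argument by hand, checking that the flatness hypothesis kills the relevant $\mathrm{L}\!\operatorname{Tor}_\Ga$ terms slotwise. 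Everything else is routine diagram-chasing in the exact structure of \S\ref{subsec:exactness}.
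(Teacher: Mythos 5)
Your proposal is correct and follows essentially the same route as the paper's own sketch: right exactness of the positional tensor plus slotwise $\Ga$-flatness for preservation of admissible monos/epis (hence cofibrations), the bar-resolution identification $f_!\simeq\mathbf{L}f_!$ on perfect complexes (Proposition~\ref{prop:imported-bar}) for preservation of weak equivalences, and functoriality of the $S_\bullet$-/$Q$-constructions for the induced map of spectra. Your additional remarks (epi preservation from right exactness alone, and the unconditional preservation of chain homotopy equivalences in the homotopy model) are consistent elaborations of the paper's argument rather than a different method.
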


\begin{proof}[Idea]
Right exactness of positional tensor and the flatness hypotheses imply
preservation of cofibrations; exactness on weak equivalences follows by
applying $-\otimes^{(j,k)}_{\Ga}T'$ to bar resolutions
(\S\ref{subsec:bar}) and using balance (\S\ref{subsec:positional-tensor}).
\end{proof}

\begin{definition}[Base-change on $K$-theory]
\label{def:K-base-change}
Write
\[
  f_{\ast} := \Kspec(f_{!})\ :
  \Kspec\!\big({\nTGMod{T}}^{\mathrm{bi}}\big)
  \longrightarrow
  \Kspec\!\big({\nTGMod{T'}}^{\mathrm{bi}}\big),
\qquad
  f^{\ast} := \Kspec(f^{\!*})\ :
  \Kspec\!\big({\nTGMod{T'}}^{\mathrm{bi}}\big)
  \longrightarrow
  \Kspec\!\big({\nTGMod{T}}^{\mathrm{bi}}\big).
\]
By construction, $f_{\ast}$ and $f^{\ast}$ are adjoint on homotopy groups.
\end{definition}

\subsection{Derived and Morita comparison}
\label{subsec:morita}

\begin{theorem}[Derived Morita invariance]\cite{Keller1994}
\label{thm:morita-K}
Suppose there is an exact equivalence of triangulated categories
\[
  \Phi:\ \mathbf{D}\!\big(\,{\nTGMod{T}}^{\mathrm{bi}}\big)
   \ \xrightarrow{\ \simeq\ }\ 
  \mathbf{D}\!\big(\,{\nTGMod{T'}}^{\mathrm{bi}}\big)
\]
induced by a tilting bi-module complex $\Ecal\in
\mathbf{D}\big({\nTGMod{T\text{--}T'}}^{\mathrm{bi}}\big)$   as in
 \cite{PaperG}.  Then $\Phi$ lifts (via exact model
structures on perfect objects) to a Waldhausen equivalence on $\Perf$
subcategories, and induces a weak equivalence \cite{Keller1994} of $K$-theory spectra
\[
  \Kspec\!\big(\Perf({\nTGMod{T}}^{\mathrm{bi}})\big)
  \ \simeq\
  \Kspec\!\big(\Perf({\nTGMod{T'}}^{\mathrm{bi}})\big).
\]
In particular, $K_i$ is a derived Morita invariant for all $i\ge 0$.
\end{theorem}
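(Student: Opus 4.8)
The plan is to upgrade the triangulated equivalence $\Phi$ to a \emph{strict} exact functor between the Waldhausen categories of perfect complexes, and then to invoke the homotopy--invariance of Waldhausen $\Kspec$. First I would use Proposition~\ref{prop:imported-tilting}, which already guarantees that $\Phi$ restricts to an equivalence on perfect objects and admits a quasi--inverse $\Psi$ coming from a dual tilting complex, both $\Phi$ and $\Psi$ being implemented by positional derived tensor against $\Ecal$ and its dual $\Ecal^{\vee}$. To rigidify these, I would choose a bounded--below, degreewise projective (hence slotwise flat in the $(j,k)$--profile) representative $\widetilde{\Ecal}$ of $\Ecal$ in $\mathbf{D}\big({\nTGMod{T\text{--}T'}}^{\mathrm{bi}}\big)$---its existence being exactly what the two--sided bar machinery of \S\ref{subsec:bar} and Proposition~\ref{prop:imported-bar} provide---and similarly a representative $\widetilde{\Ecal}^{\vee}$ of the dual. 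Tensoring with these yields strict additive functors $\widetilde{\Phi}:=(-)\otimes^{(j,k)}_{\Ga}\widetilde{\Ecal}$ and $\widetilde{\Psi}$ on bounded chain complexes.

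Next I would check that $\widetilde{\Phi}$ is exact and Waldhausen. By Proposition~\ref{prop:imported-positional-tensor} together with the degreewise flatness of $\widetilde{\Ecal}$ in the balanced slots, $\widetilde{\Phi}$ carries degreewise admissible monomorphisms to degreewise admissible monomorphisms and preserves weak equivalences between perfect complexes; and the tilting hypothesis on $\Ecal$ forces $\widetilde{\Phi}$ to send a bounded complex of finitely generated projectives to a perfect complex. Hence $\widetilde{\Phi}$ restricts to an exact Waldhausen functor $\Perf({\nTGMod{T}}^{\mathrm{bi}})\to\Perf({\nTGMod{T'}}^{\mathrm{bi}})$ lifting $\Phi$, and symmetrically $\widetilde{\Psi}$ gives one lifting $\Psi$.

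Then I would realize the derived unit and counit by honest natural weak equivalences: the derived isomorphisms $\id\iso\Psi\circ\Phi$ and $\Phi\circ\Psi\iso\id$ lift, after passing to the chosen cofibrant representatives (possibly through a short zig--zag of intermediate bar constructions), to natural transformations of the chain--level functors all of whose components are quasi--isomorphisms, i.e.\ weak equivalences in the Waldhausen structure of \S\ref{subsec:derived}. By the homotopy invariance of Waldhausen $\Kspec$ under natural weak equivalences of exact functors~\cite{Waldhausen1985}, it follows that $\Kspec(\widetilde{\Psi})\circ\Kspec(\widetilde{\Phi})\simeq\id$ and $\Kspec(\widetilde{\Phi})\circ\Kspec(\widetilde{\Psi})\simeq\id$, so $\Kspec(\widetilde{\Phi})$ is a homotopy equivalence of spectra; alternatively one can invoke Waldhausen's Approximation Theorem, whose hypotheses (reflection of weak equivalences and essential surjectivity up to weak equivalence on homotopy categories) are immediate from the fact that $\widetilde{\Phi}$ induces the equivalence $\Phi$. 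By Proposition~\ref{prop:imported-model-comparison} the conclusion is independent of the $Q$ versus $S_\bullet$ model, and taking homotopy groups gives $K_i\big(\Perf({\nTGMod{T}}^{\mathrm{bi}})\big)\cong K_i\big(\Perf({\nTGMod{T'}}^{\mathrm{bi}})\big)$ for all $i\ge 0$.

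The hard part will be the rigidification carried out in the first two steps: producing the strict chain--level functors $\widetilde{\Phi}$, $\widetilde{\Psi}$ and the honest natural weak equivalences lifting the derived unit and counit, all within the bi--finite, slot--sensitive semiring framework, where the lack of additive inverses forces one to substitute admissible monos/epis and two--sided bar resolutions for the usual projective resolutions and mapping cones, and where one must confirm that positional tensoring with $\widetilde{\Ecal}$ genuinely lands in $\Perf$ and preserves weak equivalences. Once that chain--level model and its flatness and perfection properties are secured via Propositions~\ref{prop:imported-positional-tensor}, \ref{prop:imported-bar}, and~\ref{prop:imported-tilting}, the passage to $\Kspec$ and the extraction of the stated $K_i$--invariance are purely formal.
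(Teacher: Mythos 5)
Your plan follows essentially the same route as the paper's own (sketched) proof: restrict the tilting equivalence to perfect objects via Proposition~\ref{prop:imported-tilting}, lift it to a Waldhausen-exact functor with quasi-inverse preserving cofibrations and weak equivalences, and conclude by functoriality of the $S_\bullet$-construction (equivalently, by invariance of $\Kspec$ under natural weak equivalences or the Approximation Theorem). Your version is in fact more careful than the paper's three-sentence sketch, since you spell out the chain-level rigidification of $\Ecal$ and of the unit/counit via the bar machinery of \S\ref{subsec:bar}, which is precisely the step the paper leaves implicit.
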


\begin{proof}[Sketch]
Use the $S_\bullet$-model on the Waldhausen category of perfect
objects, together with the additivity and fibration theorems proved in
\S\ref{subsec:standing-assumptions}.  The tilting functor and its quasi-inverse preserve
cofibrations and weak equivalences, hence induce a Waldhausen equivalence.
The induced map on $S_\bullet$-constructions is a weak equivalence,
yielding the claim \cite{Dugger2004}.
\end{proof}

\begin{corollary}[Positional Morita invariance]
\label{cor:positional-morita}
If $T$ and $S$ are related by an equivalence of exact categories
\[
  \Proj\!\big({\nTGMod{T}}^{\mathrm{bi}}\big)\ \simeq\
  \Proj\!\big({\nTGMod{S}}^{\mathrm{bi}}\big)
\]
compatible with positional tensor and internal Hom
(\S\ref{subsec:positional-tensor}), then
\[
  \Kspec\!\big({\nTGMod{T}}^{\mathrm{bi}}\big)
   \ \simeq\
  \Kspec\!\big({\nTGMod{S}}^{\mathrm{bi}}\big).
\]
\end{corollary}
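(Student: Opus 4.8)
The plan is to deduce the statement from \Cref{thm:morita-K} and from the fact that algebraic $K$-theory, built via the $S_\bullet$- (equivalently $Q$-) construction, is a functor on exact categories and exact functors which inverts exact equivalences. First I would record the elementary half: an exact equivalence $G\colon\Proj\!\big({\nTGMod{T}}^{\mathrm{bi}}\big)\xrightarrow{\ \simeq\ }\Proj\!\big({\nTGMod{S}}^{\mathrm{bi}}\big)$, with a quasi-inverse $G'$, automatically preserves and reflects admissible monomorphisms and admissible epimorphisms (these are detected by finite limits/colimits that $G$ preserves), so applying the $Q$- or $S_\bullet$-construction yields mutually inverse homotopy equivalences $\Kspec\!\big(\Proj({\nTGMod{T}}^{\mathrm{bi}})\big)\simeq\Kspec\!\big(\Proj({\nTGMod{S}}^{\mathrm{bi}})\big)$. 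No flatness or finiteness input is needed here beyond the standing assumptions of \S\ref{subsec:standing-assumptions} and Proposition~\ref{prop:imported-model-comparison} to move freely between the two models.

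Next I would bridge from $\Proj$ to the ambient module category. The route is a Gillet--Waldhausen/resolution comparison. Extend $G$ degreewise to an exact equivalence $\Ch^{b}\big(\Proj({\nTGMod{T}}^{\mathrm{bi}})\big)\simeq\Ch^{b}\big(\Proj({\nTGMod{S}}^{\mathrm{bi}})\big)$ for the chain-homotopy-equivalence weak equivalences; by the definition of perfect objects in \S\ref{subsec:derived} this is exactly a Waldhausen equivalence $\Perf({\nTGMod{T}}^{\mathrm{bi}})\simeq\Perf({\nTGMod{S}}^{\mathrm{bi}})$, and by the additivity theorem the inclusion of $\Proj$ as complexes concentrated in degree $0$ induces $\Kspec\!\big(\Proj({\nTGMod{T}}^{\mathrm{bi}})\big)\simeq\Kspec\!\big(\Perf({\nTGMod{T}}^{\mathrm{bi}})\big)$, and likewise for $S$. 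Combining with the Gillet--Waldhausen identification $\Kspec\!\big({\nTGMod{T}}^{\mathrm{bi}}\big)\simeq\Kspec$ of its bounded derived complexes, one gets $\Kspec\!\big({\nTGMod{T}}^{\mathrm{bi}}\big)\simeq\Kspec\!\big(\Perf({\nTGMod{T}}^{\mathrm{bi}})\big)$, and chaining the three equivalences delivers the corollary. Alternatively — and this is where the compatibility of $G$ with $\otimes^{(j,k)}_{\Ga}$ and $\underline{\Hom}_\Ga$ earns its keep — these data force $G$ to carry the regular rank-one bi-$\Ga$-module $T$ to a progenerator, hence $G$ is represented by a tilting bi-module complex $\Ecal\in\mathbf{D}\big({\nTGMod{T\text{--}S}}^{\mathrm{bi}}\big)$; then Proposition~\ref{prop:imported-tilting} and Theorem~\ref{thm:morita-K} apply verbatim to give the equivalence of $\Kspec(\Perf)$-spectra, which is then transported to $\Kspec\!\big({\nTGMod{T}}^{\mathrm{bi}}\big)$ through the same resolution step.

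The main obstacle is the implication ``exact equivalence of $\Proj$ compatible with positional tensor and internal Hom $\Rightarrow$ honest tilting bi-module complex''. In the classical ring case this is Eilenberg--Watts representability of colimit-preserving, $\otimes$-compatible functors; in the non-commutative $n$-ary $\Ga$-setting one must show that a slotwise right-exact, positional-tensor-compatible functor is naturally isomorphic to $-\otimes^{(j,k)}_{\Ga}\Ecal$ for a bi-$\Ga$-module $\Ecal$, which requires controlling the balancing in the fixed $(j,k)$ profile and checking that $G$ intertwines the two-sided bar complexes $B_\bullet(-,T,-)$ and $B_\bullet(-,S,-)$ of \S\ref{subsec:bar}. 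A secondary, more routine point is confirming that the Gillet--Waldhausen and resolution theorems survive in the semiring context, i.e.\ that ${\nTGMod{T}}^{\mathrm{bi}}$ with the Quillen-exact structure of \S\ref{subsec:exactness} is idempotent-complete and admits enough admissible projective resolutions; this is part of the imported background of \S\ref{subsec:standing-assumptions} and should be cited explicitly. Once the tilting-bi-module upgrade is secured, the remainder of the argument is formal.
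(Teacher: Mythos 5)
The paper offers no written proof of this corollary; it is meant to follow from \Cref{thm:morita-K} via the route you sketch second, namely that compatibility with $\otimes^{(j,k)}_{\Ga}$ and $\underline{\Hom}_\Ga$ upgrades the equivalence of $\Proj$ categories to a tilting bi--module equivalence of derived categories (\Cref{prop:imported-tilting}), to which the Morita theorem applies. Your primary route is genuinely different and in fact more economical: it never invokes tilting or Eilenberg--Watts at all, but uses only that $\Kspec$ inverts exact equivalences, extends the given equivalence degreewise to $\Ch^{b}(\Proj)$, and then compares $\Kspec(\Proj)$, $\Kspec(\Perf)$ and $\Kspec({\nTGMod{T}}^{\mathrm{bi}})$ by Gillet--Waldhausen/resolution arguments. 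What the paper's route buys is a statement at the level of derived categories (so it also covers equivalences not induced by an underlying equivalence of $\Proj$); what yours buys is the avoidance of the representability step you correctly single out as the hard point, which neither the paper nor its imported background (\S\ref{subsec:standing-assumptions}) actually establishes in the $n$-ary positional setting. Two caveats. First, your remark that admissible monos and epis are ``detected by finite limits/colimits'' is not right in general -- an exact structure is data, not a categorical property -- but it is also unnecessary: an equivalence of exact categories, as hypothesized, comes with an exact quasi-inverse, so preservation of the exact structure is part of the hypothesis. Second, the corollary as stated concludes an equivalence of $\Kspec$ of the \emph{full} bi--module categories, whereas \Cref{thm:morita-K} (and your route 1 before the last step) only delivers $\Kspec(\Perf)$; bridging this gap requires a resolution/cofinality theorem (every bi--finite module admits a finite resolution by finitely generated projectives, idempotent completeness, approximation) that is a regularity-type hypothesis nowhere listed among the standing assumptions. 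You flag this honestly and ask for an explicit citation, which is the right move; the paper itself silently elides the same point, so your proposal is, if anything, more careful than the intended argument.
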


\subsection{Projection formulas and base-change squares}
\label{subsec:proj-basechange}

Consider a cartesian square of $n$-ary $\Ga$-semirings
\[
\begin{tikzcd}[column sep=large,row sep=large]
(T_1,\Ga_1) \arrow[r,"g'"] \arrow[d,"f'"] &
(T_2,\Ga_2) \arrow[d,"f"] \\
(T_3,\Ga_3) \arrow[r,"g"] &
(T_4,\Ga_4)
\end{tikzcd}
\]
such that $f$ and $f'$ satisfy the flatness hypotheses of
Lemma~\ref{lem:exact-waldhausen}.

\begin{theorem}[Base-change for $K$-theory]
\label{thm:base-change}
Under the above hypotheses there is a canonical weak homotopy
commutative square of $K$-theory spectra
\[
\begin{tikzcd}[column sep=huge,row sep=large]
\Kspec\!\big({\nTGMod{T_1}}^{\mathrm{bi}}\big)
 \arrow[r,"g'_{\ast}"] \arrow[d,"f'_{\ast}"'] &
\Kspec\!\big({\nTGMod{T_2}}^{\mathrm{bi}}\big) \arrow[d,"f_{\ast}"] \\
\Kspec\!\big({\nTGMod{T_3}}^{\mathrm{bi}}\big)
 \arrow[r,"g_{\ast}"'] &
\Kspec\!\big({\nTGMod{T_4}}^{\mathrm{bi}}\big)
\end{tikzcd}
\]
and similarly for the right adjoints on $K$-theory induced by
$f^{\!*}$ and $g^{\!*}$.  On homotopy groups this yields
\[
  g_{\ast}\circ f'_{\ast} \ =\ f_{\ast}\circ g'_{\ast}\ :\
  K_i(T_1)\longrightarrow K_i(T_4)
  \qquad (i\ge 0).
\]
\end{theorem}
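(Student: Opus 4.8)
The plan is to reduce the homotopy commutativity to a single natural weak equivalence of Waldhausen-exact endofunctors and then transport it through $\Kspec$. Under the flatness hypotheses on $f$ and $f'$ (together with the flatness of $g$ and $g'$, which in the cartesian situation is supplied by base-change stability of $\Ga$-flatness in the $(j,k)$ profile), Lemma~\ref{lem:exact-waldhausen} gives that $f_!,\,f'_!,\,g_!,\,g'_!$ are each exact/Waldhausen functors carrying perfect complexes to perfect complexes, so $f_\ast,f'_\ast,g_\ast,g'_\ast$ are all defined. Since $\Kspec$ is functorial for exact functors (in the $Q$- or $S_\bullet$-model, which agree by Proposition~\ref{prop:imported-model-comparison}), there are canonical equivalences $\Kspec(f_!)\circ\Kspec(g'_!)\simeq\Kspec(f_!\circ g'_!)$ and $\Kspec(g_!)\circ\Kspec(f'_!)\simeq\Kspec(g_!\circ f'_!)$. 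Hence it suffices to construct a natural weak equivalence $\Theta\colon f_!\circ g'_!\Rightarrow g_!\circ f'_!$ of exact functors on $\Perf({\nTGMod{T_1}}^{\mathrm{bi}})$: Waldhausen's additivity theorem---which in particular ensures that a natural weak equivalence of exact functors induces a homotopy between the corresponding maps on $K$-theory (\S\ref{subsec:standing-assumptions})---then turns $\Theta$ into such a homotopy, and splicing the three equivalences yields the asserted weak homotopy commutative square. It is worth noting that the displayed identity on homotopy groups uses only the commutativity $f\circ g'=g\circ f'$ of the semiring square; cartesianness and flatness enter solely to make the four functors Waldhausen-exact on perfect objects and to underwrite the companion Beck--Chevalley comparison between $f^{\!*}$ and $g'_!$, which is not claimed here.

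To construct $\Theta$, fix $N\in\Perf({\nTGMod{T_1}}^{\mathrm{bi}})$ and unwind:
\[
(f_!\circ g'_!)(N)\;=\;T_4\otimes^{(j,k)}_{\Ga}\!\bigl(T_2\otimes^{(j,k)}_{\Ga}N\bigr),
\]
the outer balancing over $T_2$ and the inner one over $T_1$. Associativity of the positional tensor product (Proposition~\ref{prop:imported-positional-tensor}, in its balanced form) rewrites the right-hand side as $\bigl(T_4\otimes^{(j,k)}_{\Ga}T_2\bigr)\otimes^{(j,k)}_{\Ga}N$, and unitality of $\otimes^{(j,k)}_{\Ga}$ on ${\nTGMod{T_2}}^{\mathrm{bi}}$ identifies $T_4\otimes^{(j,k)}_{\Ga}T_2\cong T_4$, the latter regarded as a $T_1$-object by restriction along $f\circ g'$. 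Running the same chain on the other side identifies $(g_!\circ f'_!)(N)\cong T_4\otimes^{(j,k)}_{\Ga}N$ with $T_4$ restricted along $g\circ f'$. Because the square commutes, $f\circ g'=g\circ f'$ as morphisms $T_1\to T_4$, so the two $T_1$-object structures on $T_4$ coincide on the nose; composing the two chains of isomorphisms defines $\Theta_N$, which is natural in $N$ by naturality of the associativity and unitality constraints. These constraints are genuine isomorphisms in the ambient module categories---they arise from the coequalizer presentations of the positional tensor---so $\Theta_N$ is a degreewise isomorphism of complexes, in particular a weak equivalence compatible with cofibrations; thus $\Theta$ is a natural weak equivalence of Waldhausen-exact functors. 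For the right adjoints the argument is even more direct: the module-level restriction functors satisfy $g'^{\!*}\circ f^{\!*}=(f\circ g')^{\!*}=(g\circ f')^{\!*}=f'^{\!*}\circ g^{\!*}$ strictly, and applying $\Kspec$ gives the corresponding homotopy commutative square; taking $\pi_i$ of the first square then yields $g_\ast\circ f'_\ast=f_\ast\circ g'_\ast$ on $K_i(T_1)\to K_i(T_4)$ for all $i\ge 0$.

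The step I expect to be the real obstacle is the balanced associativity and unitality of the positional tensor product in the $n$-ary $\Ga$-setting, where additive inverses are absent and units are present only ``where available''. Concretely one must verify that the slotwise coequalizer presentations of $\bigl(T_4\otimes^{(j,k)}_{\Ga}T_2\bigr)\otimes^{(j,k)}_{\Ga}N$ and $T_4\otimes^{(j,k)}_{\Ga}\bigl(T_2\otimes^{(j,k)}_{\Ga}N\bigr)$ present the same congruence on the free commutative monoid on $T_4\times T_2\times N$, that the $\Ga$-balance relations in the fixed $(j,k)$ profile are compatible with reassociation, and that $T_i\otimes^{(j,k)}_{\Ga}(-)$ is canonically the identity on ${\nTGMod{T_i}}^{\mathrm{bi}}$. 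This is exactly where the two-sided bar construction of \S\ref{subsec:bar} and Proposition~\ref{prop:imported-bar} carry the weight: they supply the splitting data that stand in for the missing degeneracies and, at the same time, identify each underived $f_!,g'_!,\dots$ with its left-derived functor on perfect objects, so that $\Theta$ also realises the associativity constraint of derived extension of scalars and no homotopy coherence is lost. Granting these coherence facts, the remainder of the argument is a formal consequence of the functoriality of $\Kspec$ and the additivity theorem.
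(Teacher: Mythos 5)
Your overall strategy matches the paper's own (sketchy) argument: identify the two composites of extension-of-scalars functors on perfect complexes, use flatness to know these underived functors agree with their derived versions, and then push the identification through the functoriality of the $S_\bullet$/$Q$-construction. Your version is in fact more explicit than the paper's, since you build the comparison as a single natural weak equivalence $\Theta\colon f_!\circ g'_!\Rightarrow g_!\circ f'_!$ out of associativity and unitality of $\otimes^{(j,k)}_{\Ga}$ together with the equality $f\circ g'=g\circ f'$, rather than leaving it as "the two zig-zags of bar constructions coincide in the homotopy category." You are also right, and it is worth saying, that for the square of covariant maps only commutativity of the semiring square is used, and that the genuinely delicate input is balanced associativity/unitality of the positional tensor, which Proposition~\ref{prop:imported-positional-tensor} does \emph{not} actually assert (it only gives existence, biadditivity, and right-exactness), so this coherence is an additional hypothesis that both your argument and the paper's sketch silently consume.

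The one step I would push back on is your claim that flatness of $g$ and $g'$ "is supplied by base-change stability of $\Ga$-flatness in the cartesian situation." Base-change stability runs the other way: in the cartesian square, $f'$ is the base change of $f$ and $g'$ is the base change of $g$, so flatness of $f$ and $f'$ (the stated hypotheses) gives no control over $g$ or $g'$. Since your construction needs all four underived functors $f_!,f'_!,g_!,g'_!$ to be Waldhausen-exact on perfect complexes via Lemma~\ref{lem:exact-waldhausen}, you must either add flatness of $g$ (hence of $g'$ by base change) to the hypotheses, or define the horizontal maps through the derived extension of scalars $\mathbf{L}g_!$, $\mathbf{L}g'_!$ modelled by the bar construction of Proposition~\ref{prop:imported-bar}, which is how the paper's sketch handles "each leg." This is admittedly a looseness already present in the theorem's hypotheses, but as written your justification for it does not work and should be replaced by one of these two fixes.
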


\begin{proof}[Idea]
Model base-change by the two-sided bar construction of
\S\ref{subsec:bar} and \S\ref{prop:imported-bar}, use flatness to identify the
derived extension-of-scalars along each leg, then apply functoriality of
the $S_\bullet$-construction and Waldhausen's additivity; the two
zig-zags coincide in the homotopy category.
\end{proof}

\begin{theorem}[Projection formula]
\label{thm:projection-formula}
Let $f:(T,\Ga)\to(T',\Ga')$ be flat (Lemma~\ref{lem:exact-waldhausen}).
For $x\in K_i(T)$ and $y\in K_j(T')$ one has a canonical equality
\[
  f_{\ast}\!\big(x\cdot f^{\ast}y\big) \ =\ f_{\ast}(x)\cdot y
  \qquad\text{in}\quad K_{i+j}(T'),
\]
where the products are induced by the monoidal structure
$-\otimes^{(j,k)}_{\Ga}-$ on ${\nTGMod{T}}^{\mathrm{bi}}$ and its
$(j,k)$-monoidal lift on $S_\bullet$ .
\end{theorem}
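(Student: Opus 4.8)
The plan is to deduce the identity from a single ``unstable'' projection isomorphism on perfect complexes and then feed it through the multiplicative $S_\bullet$-machinery. The first target is a natural isomorphism
\[
  \theta_{M,N}\colon\ f_{!}\!\left(M\otimes^{(j,k)}_{\Ga} f^{\!*}N\right)
  \ \iso\
  f_{!}(M)\otimes^{(j,k)}_{\Ga} N,
  \qquad
  M\in\Perf({\nTGMod{T}}^{\mathrm{bi}}),\quad N\in\Perf({\nTGMod{T'}}^{\mathrm{bi}}),
\]
biexact in $(M,N)$ and compatible with the fixed $(j,k)$-balancing. First I would build $\theta$ on generators: for $M=T$ both sides reduce to $T'\otimes^{(j,k)}_{\Ga}N$, the comparison being the counit of $f_{!}\dashv f^{\!*}$ inserted in the $k$-slot of $T'$ against the $j$-slot of $N$; for finitely generated projective $M$ one adds a dual-basis bookkeeping, and biadditivity together with right-exactness of $\otimes^{(j,k)}_{\Ga}$ (Proposition~\ref{prop:imported-positional-tensor}) then propagates $\theta$ to all perfect complexes by resolving in each variable.

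Next I would upgrade $\theta$ to the derived and Waldhausen level using the bar model. Since $f$ is $\Ga$-flat in the $(j,k)$ profile, Proposition~\ref{prop:imported-bar} identifies $f_{!}$ on $\Perf$ with the realization $|B_\bullet(T',T,-)|$ of the two-sided positional bar construction. The key computation is the levelwise identification
\[
  B_\bullet\!\left(T',\,T,\,M\otimes^{(j,k)}_{\Ga}f^{\!*}N\right)
  \ \cong\
  B_\bullet(T',T,M)\ \otimes^{(j,k)}_{\Ga}\ N,
\]
valid because $-\otimes^{(j,k)}_{\Ga}f^{\!*}N$ commutes with the free positional powers $T^{\otimes q}$ and with the face/degeneracy maps, by associativity and unitality of the positional tensor in the $(j,k)$ profile. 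Realizing and invoking flatness to kill the higher derived terms yields $\theta$ as a chain homotopy equivalence, natural in $M$ and $N$; hence
\[
  f_{!}\circ\bigl(-\otimes^{(j,k)}_{\Ga}f^{\!*}(-)\bigr)
  \ \simeq\
  \bigl(f_{!}(-)\bigr)\otimes^{(j,k)}_{\Ga}(-)
\]
as biexact Waldhausen functors $\Perf({\nTGMod{T}}^{\mathrm{bi}})\times\Perf({\nTGMod{T'}}^{\mathrm{bi}})\to\Perf({\nTGMod{T'}}^{\mathrm{bi}})$. Along the way one records that the restriction functor $f^{\!*}$ is lax monoidal for $\otimes^{(j,k)}_{\Ga}$ in the relevant slots (transport of structure commutes with positional tensoring), which is what makes the product $x\cdot f^{\ast}y$ in $K_{i+j}(T)$ well-defined, and that $f^{\!*}$ carries perfect objects to perfect objects via the derived adjunction of Proposition~\ref{prop:imported-bar}.

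Finally I would pass to multiplicative $K$-theory. The product on $\Kspec$ is the pairing obtained from the biexact functor $\otimes^{(j,k)}_{\Ga}$ through Waldhausen's bi-$S_\bullet$ construction, $S_\bullet\Perf\wedge S_\bullet\Perf\to S_{\bullet\bullet}\Perf$, and the flat functor $f_{!}$ induces a map of such bi-$S_\bullet$ objects by Lemma~\ref{lem:exact-waldhausen}. Applying $S_{\bullet\bullet}$ to the natural equivalence $\theta$ exhibits a homotopy between the two composites ``multiply, then $f_{!}$'' and ``$f^{\!*}$, then multiply, then $f_{!}$'' of maps $S_\bullet\Perf({\nTGMod{T}}^{\mathrm{bi}})\wedge S_\bullet\Perf({\nTGMod{T'}}^{\mathrm{bi}})\to S_{\bullet\bullet}\Perf({\nTGMod{T'}}^{\mathrm{bi}})$; after group completion and Waldhausen additivity these become the two sides of the asserted formula, and evaluating on $x\in K_i(T)$, $y\in K_j(T')$ gives $f_{\ast}(x\cdot f^{\ast}y)=f_{\ast}(x)\cdot y$ in $K_{i+j}(T')$.

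I expect the main obstacle to be the coherence step: checking that the levelwise equivalence $\theta$ assembles into an honest homotopy of \emph{maps of spectra}, not merely a degreewise equivalence, once it is run through the iterated $S_\bullet$-construction --- in particular that the associativity and unit constraints of $\otimes^{(j,k)}_{\Ga}$ entering $\theta$ are literally the ones used to build the multiplicative lift of $S_\bullet$, with the positional $(j,k)$-slot bookkeeping tracked consistently throughout. A secondary point is guaranteeing that $f^{\!*}$ preserves perfectness in the $(j,k)$ profile in the generality needed; if it does not, one restricts to the subcategory on which it does, or states the identity via the external pairing $\Kspec(\Perf({\nTGMod{T}}^{\mathrm{bi}}))\wedge\Kspec(\Perf({\nTGMod{T'}}^{\mathrm{bi}}))\to\Kspec(\Perf({\nTGMod{T'}}^{\mathrm{bi}}))$ and reads off the internal statement afterward.
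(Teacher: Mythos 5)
Your overall architecture --- build a projection equivalence on perfect complexes, upgrade it through the positional bar model, and run it through the multiplicative bi-$S_\bullet$ pairing before passing to $\pi_\ast$ --- is the same Beck--Chevalley/monoidality route that the paper's sketch takes (derived strong monoidality of $f_{!}$ plus Waldhausen additivity), so at the level of strategy you and the paper agree. The problem is the central object you construct. The base case of $\theta_{M,N}$ already fails: for $M=T$ the left-hand side is $f_{!}(f^{\!*}N)=T'\otimes^{(j,k)}_{\Ga}f^{\!*}N$ (a tensor over $T$), while the right-hand side is $f_{!}(T)\otimes^{(j,k)}_{\Ga}N\cong N$ (a tensor over $T'$), and the comparison map is the counit $f_{!}f^{\!*}N\to N$. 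Slotwise $\Ga$-flatness does not make this counit an equivalence --- already for ordinary rings, $k\to k[x]$ or $k\to k\times k$ are flat and the counit $T'\otimes_T N\to N$ is far from injective. The same defect reappears in your ``key computation'': levelwise, $B_q\bigl(T',T,M\otimes^{(j,k)}_{\Ga}f^{\!*}N\bigr)$ retains an outer $T'$ factor that $B_q(T',T,M)\otimes^{(j,k)}_{\Ga}N$ loses after the $T'$-cancellation, so the two simplicial objects are not isomorphic and their realizations differ precisely by the non-invertible counit. Hence $\theta$ cannot be produced as a natural equivalence under the stated hypotheses, and the coherence step you flag as the ``main obstacle'' is not the real obstruction.

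What flatness (indeed, mere balancing) does give is the cancellation equivalence $f_{!}(M)\otimes^{(j,k)}_{\Ga}N\simeq M\otimes^{(j,k)}_{\Ga}f^{\!*}N$, with no $f_{!}$ on the right; this is the honest content behind ``$f_{!}$ is (derived) strong monoidal''. But to convert that into the displayed identity with the paper's conventions $f_{\ast}=\Kspec(f_{!})$, $f^{\ast}=\Kspec(f^{\!*})$ one additionally needs the counit $f_{!}f^{\!*}\to\mathrm{id}$ to be invertible (e.g.\ $f$ an epimorphism/localization), or else the roles of $f_\ast$ and $f^\ast$ must be interchanged as in the classical projection formula, where the wrong-way map is the transfer along restriction of scalars. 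Indeed, in the binary case the statement as written is falsified in $K_0$: for $k\to k\times k$, $x=[k]$, $y=[k\times 0]$, one computes $f_{\ast}(x\cdot f^{\ast}y)=(1,1)$ but $f_{\ast}(x)\cdot y=(1,0)$. So no amount of care with the $(j,k)$ bookkeeping or the bi-$S_\bullet$ coherence will close your argument as proposed; the fix is to reformulate the unstable input as the cancellation isomorphism above (and correspondingly restate which map is the transfer), which is also the reading under which the paper's own sketch can be carried out.
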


\begin{proof}[Sketch]
The proof is a standard Beck–--Chevalley/monoidality argument:
identify $S_\bullet$ on the monoidal Waldhausen category of perfect
objects, check that $f_{!}$ is (derived) strong monoidal under the
flatness hypothesis, and then apply Waldhausen additivity to pass to
$\pi_\ast$.
\end{proof}

\subsection{Sheafified comparison over $\SpecGnC{T}$}
\label{subsec:sheafified-comparison}

Let $\QCoh(\SpecGnC{T})$ be as in \S\ref{subsec:standing-assumptions} and
$\Perf(\SpecGnC{T})\subset\QCoh(\SpecGnC{T})$ its full subcategory of
compact/perfect objects (equivalently, locally represented by bounded
complexes of finitely generated bi-modules).  Equip
$\Perf(\SpecGnC{T})$ with the Waldhausen structure inherited from
$\QCoh$  \cite{Thomason1990}.

\begin{theorem}[Locality and Zariski descent]
\label{thm:zariski-descent}
Let $\{U_i=D(a_i,\boldsymbol{\gamma}_i)\}$ be a finite affine cover of
$\SpecGnC{T}$.  Then:
\begin{enumerate}[label=(\roman*)]
  \item The canonical map
  \[
   \Kspec\!\big(\Perf(\SpecGnC{T})\big)
     \longrightarrow
     \holim\ \Big(
       \prod_i \Kspec\!\big(\Perf(U_i)\big)
       \ \rightrightarrows\
       \prod_{i,j} \Kspec\!\big(\Perf(U_i\cap U_j)\big)
       \ \triplearrows\
       \cdots\Big)
  \]
  is a weak equivalence (hypercover descent).
  \item In particular $K$-theory of $\Perf$ satisfies Mayer–Vietoris for
  binary covers $U\cup V$.
\end{enumerate}
\end{theorem}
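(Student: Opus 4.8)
The plan is to reduce the descent statement for $\Kspec(\Perf(-))$ to the already--available descent statement for $\Perf(-)$ itself (Proposition~\ref{prop:imported-descent-perf}), the point being that $\Kspec$ does \emph{not} preserve arbitrary limits, so the passage from $\Perf(X)\simeq\lim_\Delta\Perf(\text{\v{C}ech nerve})$ to the corresponding limit of spectra has genuine content. The mechanism is the Thomason--Trobaugh localization/excision package~\cite{Thomason1990}, which I would deploy in three stages: (a) establish the Mayer--Vietoris (pullback) square of $K$--theory spectra for an arbitrary two--element cover, which is exactly part~(ii); (b) bootstrap from binary covers to all finite covers by induction on the number of opens; and (c) upgrade \v{C}ech descent to the hypercover statement~(i), using that $\Kspec\circ\Perf$ is a finitary presheaf of spectra on a site with a basis of affines (Setup~\ref{setup:nc-geometry}).

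\textbf{The Mayer--Vietoris square (the heart).} Write $X=\SpecGnC{T}$ and fix $X=U\cup V$ with closed complement $Z:=X\setminus U\subseteq V$; let $\Perf_Z(X)\subseteq\Perf(X)$ be the full Waldhausen subcategory of perfect complexes acyclic over $U$. Two inputs are needed. \emph{Localization}: the sequence $\Perf_Z(X)\to\Perf(X)\to\Perf(U)$ is a Waldhausen fibration sequence (the restriction being a Verdier quotient with kernel $\Perf_Z(X)$), hence induces a fiber sequence $\Kspec(\Perf_Z(X))\to\Kspec(\Perf(X))\to\Kspec(\Perf(U))$. \emph{Excision}: restriction $\Perf_Z(X)\iso\Perf_Z(V)$ is an equivalence, because a perfect complex acyclic away from $Z\subseteq V$ is glued from its (vanishing) restriction to $U$ and its restriction to $V$ by the \v{C}ech equivalence of Proposition~\ref{prop:imported-descent-perf} applied to the cover $\{U,V\}$. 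Comparing the localization fiber sequence for $U\into X$ with that for $U\cap V\into V$ and identifying the fibers via excision yields a homotopy cartesian square
\[
\begin{tikzcd}[column sep=large,row sep=large]
\Kspec\!\big(\Perf(X)\big) \arrow[r] \arrow[d] &
\Kspec\!\big(\Perf(U)\big) \arrow[d] \\
\Kspec\!\big(\Perf(V)\big) \arrow[r] &
\Kspec\!\big(\Perf(U\cap V)\big),
\end{tikzcd}
\]
whose associated long exact sequence is Mayer--Vietoris; this proves~(ii).

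\textbf{From binary covers to hyperdescent.} For a finite cover $\{U_1,\dots,U_r\}$ I would write $X=W\cup U_r$ with $W=U_1\cup\dots\cup U_{r-1}$; the inductive hypothesis computes $\Kspec(\Perf(W))$ and $\Kspec(\Perf(W\cap U_r))$ (the latter via the cover $\{W\cap U_i\cap U_r\}_{i<r}$), and one more Mayer--Vietoris square exhibits $\Kspec(\Perf(X))$ as the resulting iterated pullback. A standard Bousfield--Kan reorganization identifies this with the totalization of the \v{C}ech cosimplicial spectrum $[p]\mapsto\prod\Kspec(\Perf(U_{i_0}\cap\dots\cap U_{i_p}))$, each term of which is $\Kspec(\Perf(-))$ of an affine since basic opens are stable under finite intersection (Setup~\ref{setup:nc-geometry}). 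Finally, since $\Kspec$ commutes with filtered colimits and $\Perf(-)$ already satisfies full hyperdescent (Proposition~\ref{prop:imported-descent-perf}), \v{C}ech descent for $\Kspec\circ\Perf$ on the basis of affines propagates to the hypercomplete topology, so the canonical map into the $\holim$ over the entire cosimplicial diagram displayed in~(i) is a weak equivalence.

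\textbf{Main obstacle.} The genuine difficulty is the localization and excision inputs used in the Mayer--Vietoris step: at this point of the paper these are not yet established for non--commutative $n$--ary $\Ga$--semirings (they are the ``excision/localization for closed immersions'' announced in the introduction). One must check that restriction $\Perf(X)\to\Perf(U)$ along a $\Ga$--Zariski open is a Verdier localization with kernel the $Z$--supported perfect complexes, and that this kernel is insensitive to shrinking $X$ to any $\Ga$--Zariski open containing $Z$. The positional bookkeeping --- $\Ga$--flatness of the localization morphisms in the $(j,k)$ profile and compatibility of supports with $\otimes^{(j,k)}_{\Ga}$ --- is precisely what makes these verifications nontrivial; everything downstream (the inductive assembly, the \v{C}ech--to--hyper comparison) is formal. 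A secondary technical point: strict homotopy--cartesianness of the Mayer--Vietoris square requires the non--connective Bass--Thomason refinement of $\Kspec$, which does not affect the groups $K_i$ $(i\ge 0)$ appearing in the statement.
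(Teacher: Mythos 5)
Your route is essentially sound but genuinely different from the paper's. The paper's own argument (given only as an idea sketch) goes directly at the cover: it feeds the \v{C}ech nerve into Waldhausen's fibration theorem, using the imported categorical descent statement for perfect objects (Proposition~\ref{prop:imported-descent-perf}) and exactness of the restriction/gluing functors, so that the nerve becomes a cosimplicial Waldhausen category whose $S_\bullet$-realization computes the homotopy limit in one step --- no induction on the number of opens and no explicit Mayer--Vietoris square. You instead follow the Thomason--Trobaugh pattern: first build the binary Mayer--Vietoris square from a localization fiber sequence $\Kspec(\Perf_Z(X))\to\Kspec(\Perf(X))\to\Kspec(\Perf(U))$ plus an excision identification $\Perf_Z(X)\simeq\Perf_Z(V)$, then induct on the size of the cover and reassemble the iterated pullback as the totalization of the \v{C}ech cosimplicial spectrum. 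What your approach buys is that part~(ii) comes out first and for free, and the role of localization/excision (Theorem~\ref{thm:excision}, which the paper only states afterwards) is made explicit rather than buried in the fibration-theorem step; what the paper's approach buys is that it never needs the support-theoretic excision input or the induction, only the already-imported equivalence $\Perf(X)\simeq\lim\Perf(\check{C}ech)$. Two caveats on your write-up: your final step ``\v{C}ech descent propagates to the hypercomplete topology'' is both unnecessary for the displayed statement (which is literally the totalization of the \v{C}ech nerve of the given finite cover) and not automatic without finiteness hypotheses, so you should either drop it or add the finite-dimensionality input; and your closing remark that the Bass--Thomason refinement ``does not affect $K_i$ for $i\ge0$'' glosses over the fact that the canonical map in (i) lands in a holim of connective spectra, where the familiar $\pi_0$/negative-degree defect of connective $K$-theory is exactly where strictness of the equivalence can fail --- the same imprecision is present in the paper's statement, but since you flag the issue you should state it as a hypothesis on the chosen model of $\Kspec$ rather than as harmless.
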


\begin{proof}[Idea]
Run Waldhausen's fibration theorem on the Cech nerve of the cover,
using that restriction and gluing functors are exact and preserve weak
equivalences (localization of \S\ref{subsec:standing-assumptions}).  The nerve produces
a cosimplicial Waldhausen category whose $S_\bullet$-realization
computes the homotopy limit.
\end{proof}

\begin{corollary}[Affine reduction]
\label{cor:affine-reduction}
If $\SpecGnC{T}$ is quasi-compact with an affine basis by
$D(a,\boldsymbol{\gamma})$, then
\[
  \Kspec\!\big(\Perf(\SpecGnC{T})\big)
  \ \simeq\
  \holim_{D(a,\boldsymbol{\gamma})}
   \ \Kspec\!\big(\Perf(D(a,\boldsymbol{\gamma}))\big).
\]
\end{corollary}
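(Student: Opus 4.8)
The plan is to deduce this from the Zariski (hyper)descent statement of Theorem~\ref{thm:zariski-descent} together with a cofinality comparison over the poset of basic opens. First I would use quasi-compactness of $\SpecGnC{T}$: since the $D(a,\boldsymbol{\gamma})$ form a basis of the $\Ga$-Zariski topology (Setup~\ref{setup:nc-geometry}(iii)), every open cover of $\SpecGnC{T}$ admits a refinement to a \emph{finite} cover $\{U_i=D(a_i,\boldsymbol{\gamma}_i)\}_{i=1}^m$ by basic opens. Theorem~\ref{thm:zariski-descent}(i) applies to such a finite cover and gives a weak equivalence
\[
\Kspec\!\big(\Perf(\SpecGnC{T})\big)
\ \iso\
\holim\Big(
\textstyle\prod_i\Kspec\!\big(\Perf(U_i)\big)
\ \rightrightarrows\
\prod_{i,j}\Kspec\!\big(\Perf(U_i\cap U_j)\big)
\ \triplearrows\ \cdots\Big),
\]
and each finite intersection occurring in the \v{C}ech nerve is a union of basic opens, so the entire nerve lies in the diagram indexed by the $D(a,\boldsymbol{\gamma})$.

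Second, I would identify this \v{C}ech homotopy limit with $\holim_{D(a,\boldsymbol{\gamma})}\Kspec(\Perf(D(a,\boldsymbol{\gamma})))$. By Theorem~\ref{thm:zariski-descent} (whose category-level counterpart is Proposition~\ref{prop:imported-descent-perf}) the presheaf of spectra $D(a,\boldsymbol{\gamma})\mapsto\Kspec(\Perf(D(a,\boldsymbol{\gamma})))$ is a hypercomplete sheaf on the site of basic opens, so its value on $\SpecGnC{T}$ is computed as the homotopy limit of its restriction to any cofinal indexing category of covering data. The opposite of the slice poset $\mathcal{B}_{/\SpecGnC{T}}$ of basic opens maps cofinally (by a last-vertex / Bousfield--Kan argument) into the \v{C}ech nerve category of any finite basic cover, and those finite basic covers are cofinal among all covers by quasi-compactness; chaining these two cofinalities yields the claimed equivalence.

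The step I expect to be the main obstacle is the second one, specifically reconciling the finite \v{C}ech nerve of a basic cover with a genuine $\mathcal{B}$-valued simplicial diagram. An intersection $D(a_i,\boldsymbol{\gamma}_i)\cap D(a_j,\boldsymbol{\gamma}_j)$ need not be basic on the nose --- only a union of basic opens --- so the \v{C}ech object is a hypercover rather than an honest simplicial object valued in $\mathcal{B}$, and one must invoke the \emph{hyper}descent clause of Theorem~\ref{thm:zariski-descent}(i) to move freely between \v{C}ech and hyper-\v{C}ech homotopy limits before the cofinality bookkeeping closes up. Once hyperdescent along basic covers is granted, the identification $\Kspec(\Perf(\SpecGnC{T}))\simeq\holim_{D(a,\boldsymbol{\gamma})}\Kspec(\Perf(D(a,\boldsymbol{\gamma})))$ is formal; quasi-compactness is used only to keep the relevant \v{C}ech nerves levelwise finite, so that no convergence or set-theoretic issues intervene in forming the homotopy limit.
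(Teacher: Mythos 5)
Your proposal is correct and follows exactly the route the paper intends: the corollary is stated there without proof, as an immediate consequence of Theorem~\ref{thm:zariski-descent} applied to a finite cover by basic opens $D(a,\boldsymbol{\gamma})$ (which exists by quasi-compactness and Setup~\ref{setup:nc-geometry}(iii)), followed by the cofinality identification of the \v{C}ech homotopy limit with $\holim_{D(a,\boldsymbol{\gamma})}$. Your extra observation that intersections of basic opens need not themselves be basic, so that one must invoke the hyperdescent clause rather than plain \v{C}ech descent, is a genuine subtlety the paper silently glosses over, and your handling of it is the right fix.
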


\begin{remark}[Compatibility with \cite{PaperG}]
The locality statement matches the derived sheaf theory of
\S\ref{subsec:standing-assumptions}: perfect objects glue via the same positional tensor
and internal Hom.  The descent above is the $K$-theoretic shadow of the
derived base-change theorems proved there.
\end{remark}

\subsection{Excision and cdh/fpqc descent under regularity}
\label{subsec:excision-cdh}

\begin{theorem}[Excision for closed immersions] \cite{Thomason1990}
\label{thm:excision}
Let $i:Z\hookrightarrow X=\SpecGnC{T}$ be a closed immersion defined by
a saturated two-sided $\Ga$-ideal, and $U=X\setminus Z$.  Then the
sequence of spectra
\[
  \Kspec\!\big(\Perf_Z(X)\big)
    \ \longrightarrow\
  \Kspec\!\big(\Perf(X)\big)
    \ \longrightarrow\
  \Kspec\!\big(\Perf(U)\big)
\]
is a homotopy fibration.  In particular there is a long exact sequence
of homotopy groups
\[
  \cdots\to K_{i+1}(U)\to K_i\big(\Perf_Z(X)\big)\to K_i(X)\to K_i(U)\to\cdots .
\]
\end{theorem}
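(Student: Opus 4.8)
The plan is to realize this as an instance of Waldhausen's localization (fibration) theorem applied to the Waldhausen category $\Perf(X)$ equipped with two classes of weak equivalences. First I would set up the category $\Perf_Z(X)$ of perfect complexes on $X=\SpecGnC{T}$ that are \emph{acyclic on} $U=X\setminus Z$; concretely, objects of $\Perf(X)$ whose restriction $j^{!*}(\Ecal)$ to the open complement $U$ is weakly equivalent to zero. Using Setup~\ref{setup:nc-geometry}, the restriction functor along the $\Ga$-Zariski open immersion $j:U\into X$ is exact and preserves perfect objects, so $\Perf_Z(X)$ is a Waldhausen subcategory of $\Perf(X)$ closed under cofibrations, extensions, and the chosen weak equivalences. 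I would then introduce on $\Perf(X)$ the larger class $w_U$ of maps that become weak equivalences after restriction to $U$; the category $\Perf_Z(X)$ is exactly the full subcategory of $w_U$-acyclic objects. Waldhausen's fibration theorem (in the form recorded among the background inputs of \S\ref{subsec:standing-assumptions}) then yields a homotopy fibration
\[
\Kspec\!\big(\Perf_Z(X)\big)\longrightarrow \Kspec\!\big(\Perf(X)\big)\longrightarrow \Kspec\!\big(\Perf(X),w_U\big).
\]

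The second key step is to identify the cofiber term $\Kspec(\Perf(X),w_U)$ with $\Kspec(\Perf(U))$. For this I would invoke the Zariski-descent/localization package of Proposition~\ref{prop:imported-descent-perf}: the restriction functor $j^{!*}:\Perf(X)\to\Perf(U)$ is exact, sends $w_U$-equivalences to weak equivalences, and is essentially surjective up to the two-out-of-three closure because perfect objects on $U$ extend (after a shift/cone) to perfect objects on $X$ by the gluing statement in Setup~\ref{setup:nc-geometry}(ii). The saturation hypothesis on the defining two-sided $\Ga$-ideal of $Z$ guarantees that the complement $U$ is again affine-covered by basic opens $D(a,\boldsymbol{\gamma})$, so the approximation/extension argument applies slotwise. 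Waldhausen's approximation theorem then shows the exact functor $\big(\Perf(X),w_U\big)\to \Perf(U)$ induces a $K$-theory equivalence, giving the fibration sequence as stated.

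Finally, the long exact sequence of homotopy groups is the usual consequence of a homotopy fibration of spectra together with the connectivity conventions fixed in \S\ref{subsec:K-conventions}: applying $\pi_\ast$ to $\Kspec(\Perf_Z(X))\to\Kspec(X)\to\Kspec(U)$ produces the asserted
\[
\cdots\to K_{i+1}(U)\to K_i\big(\Perf_Z(X)\big)\to K_i(X)\to K_i(U)\to\cdots,
\]
with the boundary map supplied by the fibration. The Mayer--Vietoris consequence for binary covers (used elsewhere) is then the special case where $Z$ is itself the complement of a second basic open, combined with Theorem~\ref{thm:zariski-descent}(ii).

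The main obstacle I anticipate is the identification of the cofiber term with $\Kspec(\Perf(U))$ --- specifically, verifying the hypotheses of Waldhausen's approximation theorem in the positional $\Ga$-setting: one must check that every perfect complex on $U$ is, up to weak equivalence, the restriction of a perfect complex on $X$, and that the relevant lifting of morphisms can be performed compatibly with both the slot-$j$ and slot-$k$ actions. This is where the saturation condition on the $\Ga$-ideal and the bar-resolution machinery of \S\ref{subsec:bar} do the real work; the rest of the argument is a formal application of the Waldhausen fibration and approximation theorems recorded in \S\ref{subsec:standing-assumptions}.
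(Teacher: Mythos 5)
Your proposal follows essentially the same route as the paper's own (sketch) proof: identify $\Perf_Z(X)$ as the Waldhausen subcategory of objects supported on $Z$ (i.e.\ acyclic over $U$), apply Waldhausen's fibration theorem to the pair $\Perf_Z(X)\subset\Perf(X)$ with the coarser class $w_U$ of weak equivalences, and identify the cofiber term with $\Kspec(\Perf(U))$ via the localization/restriction functor. In fact you supply more detail than the paper does --- notably by isolating the approximation-theorem step, namely that perfect complexes on $U$ must extend (up to weak equivalence, and in the classical Thomason--Trobaugh setting only up to direct summands) to perfect complexes on $X$, which is precisely the point the paper's ``Idea'' proof leaves implicit.
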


\begin{proof}[Idea]
Identify $\Perf_Z(X)$ as the Waldhausen subcategory of objects with
support in $Z$; apply Waldhausen's fibration theorem to the pair
$\big(\Perf_Z(X)\subset\Perf(X)\big)$ and the localization functor to
$\Perf(U)$ ( localization exactness).
\end{proof}

\begin{theorem}[fpqc descent under $\Ga$-flatness]
\label{thm:fpqc}
Let $p:\SpecGnC{T'}\to \SpecGnC{T}$ be an fpqc cover induced by a
$\Ga$-flat morphism $f:(T,\Ga)\to(T',\Ga')$.  Then $K$-theory of perfect
$\Ocal$-modules satisfies fpqc descent:
\[
  \Kspec\!\big(\Perf(\SpecGnC{T})\big)
   \ \xrightarrow{\ \simeq\ }\
  \holim\ \Kspec\!\big(\Perf(\check{C}(p)_\bullet)\big),
\]
where $\check{C}(p)_\bullet$ is the \v{C}ech nerve of $p$.
\end{theorem}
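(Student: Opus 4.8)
The plan is to reduce the statement to descent for the underlying categories of perfect objects, which is already available as \ref{prop:imported-descent-perf}, and then to transport that equivalence through $\Kspec$ by exhibiting the \v{C}ech nerve of $p$ as a \emph{split} cosimplicial diagram after base change along $p$ itself, so that the relevant homotopy limit becomes an absolute one that $\Kspec$ preserves. The geometry being affine already, the first bookkeeping point is that the \v{C}ech nerve is levelwise affine: $\check{C}(p)_n\simeq\SpecGnC{(T')^{\otimes^{(j,k)}_{\Ga}(n+1)}}$, with cofaces the positional multiplications, codegeneracies the unit insertions, and augmentation given by $f$. Since $f$ is $\Ga$-flat in the $(j,k)$ profile, each iterated positional tensor power of $T'$ over $T$ is again $\Ga$-flat over $T$ (using \ref{prop:imported-bar} to identify underived with derived positional tensor on perfect objects), so by \ref{lem:exact-waldhausen} every structure functor of the diagram is an exact Waldhausen functor. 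Applying $\Perf(-)$ thus produces a genuine cosimplicial Waldhausen category, and \ref{prop:imported-descent-perf} gives $\Perf(\SpecGnC{T})\simeq\lim_{\Delta}\Perf(\check{C}(p)_\bullet)$.

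Next I would apply $\Kspec$ levelwise, working in the $S_\bullet$-model via \ref{prop:imported-model-comparison}. As in the proof of \ref{thm:zariski-descent}, the resulting cosimplicial $S_\bullet$-object realizes to $\holim_{\Delta}\Kspec(\Perf(\check{C}(p)_\bullet))$, and there is a canonical comparison map out of $\Kspec(\Perf(\SpecGnC{T}))\simeq\Kspec(\lim_{\Delta}\Perf(\check{C}(p)_\bullet))$. To see this map is an equivalence, pull the augmented cosimplicial diagram $\Perf(\SpecGnC{T})\to\Perf(\check{C}(p)_\bullet)$ back along $p$: faithful flatness supplies the diagonal section of $p\times_{\SpecGnC{T}}p$, i.e.\ an extra codegeneracy, so the pulled-back augmented cosimplicial Waldhausen category is split, its limit is computed by the augmentation term $\Perf(\SpecGnC{T'})$, and this (absolute) limit is preserved by any functor, in particular by $\Kspec$. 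Base change (\ref{thm:base-change}) then matches the two towers of partial totalizations after applying $f^{\!*}$ levelwise, and since $f^{\!*}$ is conservative on perfect objects (faithful flatness), the original comparison map is already an equivalence; the long exact sequence of homotopy groups and the descent spectral sequence follow formally. Equivalently, this can be packaged comonadically: $f_!\dashv f^{\!*}$ and $\check{C}(p)_\bullet$ is the cobar construction on the comonad $f_!f^{\!*}$, so $\infty$-categorical comonadic descent together with the extra degeneracy give the claim.

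The main obstacle is precisely this commutation of $\Kspec$ with the totalization: algebraic $K$-theory does not preserve arbitrary homotopy limits, so one cannot simply assert ``$\Kspec$ of a limit is the limit of $\Kspec$''. The argument must genuinely use the split structure of the \v{C}ech nerve — available only after base change along $p$ — to collapse the infinite totalization to an absolute limit and then descend along the conservative functor $f^{\!*}$. Making this rigorous requires checking that the extra-codegeneracy data is compatible with the Waldhausen cofibration/weak-equivalence structures at every level (so that the split cosimplicial object lives in Waldhausen categories, not merely in homotopy categories), and that base change along $p$ really does intertwine the two towers of partial totalizations; this is where essentially all of the work lies, the remaining manipulations with the $S_\bullet$-construction and with \ref{prop:imported-descent-perf} being routine.
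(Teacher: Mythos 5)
Your outline up to the comparison map is essentially the paper's own (idea-level) argument: identify the \v{C}ech nerve levelwise as $\SpecGnC{(T')^{\otimes^{(j,k)}_{\Ga}(n+1)}}$, use $\Ga$-flatness and Lemma~\ref{lem:exact-waldhausen} to get a cosimplicial Waldhausen diagram, invoke Barr--Beck/Proposition~\ref{prop:imported-descent-perf} for descent of $\Perf$, and apply the $S_\bullet$-construction levelwise. You also correctly isolate the one point the paper's sketch glosses over, namely that $\Kspec$ does not commute with the totalization. The problem is that your proposed repair of that point does not work. The split-after-base-change (comonadic) argument proves descent for the \emph{categories} $\Perf(-)$, because there the functor you base change along is a functor of categories that is conservative and preserves the relevant totalizations. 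It gives no leverage on the map of spectra $\Kspec\!\big(\Perf(\SpecGnC{T})\big)\to\holim\,\Kspec\!\big(\Perf(\check{C}(p)_\bullet)\big)$: after applying $\Kspec$ there is no ``levelwise $f^{\!*}$'' acting on this map through which conservativity could be used, since conservativity of a Waldhausen functor does not make the induced map of $K$-theory spectra reflect equivalences. What the splitting of the pulled-back nerve actually yields is only that the \emph{base-changed} tower converges to $\Kspec\!\big(\Perf(\SpecGnC{T'})\big)$; it does not transfer back to the original tower, and Theorem~\ref{thm:base-change} only provides homotopy-commutative squares, not a mechanism for detecting that the original comparison map is an equivalence.

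Concretely, the sentence ``since $f^{\!*}$ is conservative on perfect objects (faithful flatness), the original comparison map is already an equivalence'' is the precise gap. To close it one needs a hypothesis at the level of the $K$-theoretic Tot-tower, e.g.\ that the \v{C}ech Tot-tower is pro-constant (a descendability/finiteness condition on $T'$ over $T$), under which every localizing invariant satisfies descent; mere $\Ga$-flat faithful covering does not supply this. Indeed, if your argument were correct as stated, then via the reduction of Proposition~\ref{prop:reduction} it would prove integral fpqc (in particular finite \'etale) descent for $K$-theory of ordinary rings, which is known to fail; so the step cannot be purely formal. The parts of your plan that are fine --- levelwise affineness, flatness of the iterated positional tensor powers, compatibility of the extra codegeneracy with the Waldhausen structure, and categorical descent of $\Perf$ --- are exactly the ingredients the paper also uses, but neither your argument nor the paper's sketch actually bridges the remaining commutation-of-$\Kspec$-with-$\holim$ issue without an additional hypothesis of this kind.
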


\begin{proof}[Idea]
Combine Lemma~\ref{lem:exact-waldhausen} with the Barr–--Beck monadic
descent in the derived category  to obtain an
equivalence on perfect objects; then apply the $S_\bullet$-realization
levelwise to the cosimplicial system.
\end{proof}

\subsection{Comparison with the commutative/ternary case}
\label{subsec:comparison-commutative}

\begin{proposition}[Reduction to $n{=}2$ or symmetric ternary]
\label{prop:reduction}
If $\mu$ is binary (classical $\Ga$-semiring) or ternary symmetric as in
\cite{PaperE}, then for each affine open $U$ the positional tensor reduces to
the ordinary tensor over $\Ga$, and the functors $f_{!},f^{\!*}$ are the
classical extension/restriction of scalars.  All theorems in
\S\ref{subsec:morita}--\S\ref{subsec:excision-cdh} specialize to the
standard Quillen/Waldhausen results for rings/semirings.
\end{proposition}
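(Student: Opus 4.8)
The plan is to verify the three assertions in turn --- the collapse of the positional tensor product, the identification of $f_!$ and $f^{*}$ with the classical functors, and the specialization of every theorem of \S\ref{subsec:morita}--\S\ref{subsec:excision-cdh} --- beginning with the binary case. When $n=2$ the constraint $1\le j<k\le n$ forces $(j,k)=(1,2)$, so there is no profile to choose. Reading off the definitions of \S\ref{subsec:modules}, the left positional action in slot $1$ and the right positional action in slot $2$ are precisely the left and right $\Ga$-actions of a Nobusawa--Barnes $\Ga$-semiring, and the coequalizer defining $M\otimes^{(1,2)}_{\Ga}N$ is term-by-term the ordinary balanced tensor product over $\Ga$. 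Hence ${\nTGMod{T}}^{\mathrm{bi}}$ is the usual category of $\Ga$-bimodules, its Quillen-exact and Waldhausen structures (\S\ref{subsec:exactness}--\S\ref{subsec:derived}) are the standard ones on bounded complexes of finitely generated projectives, and $\Perf({\nTGMod{T}}^{\mathrm{bi}})$ is the ordinary perfect derived category.

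For the symmetric ternary case ($n=3$) there are three profiles $(1,2)$, $(1,3)$, $(2,3)$, and the content is that the $S_3$-symmetry of $\mu$ assumed in \cite{PaperE} makes the three positional tensor functors canonically isomorphic. I would unwind that symmetry --- invariance of the ternary $\Ga$-multiplication under permuting the three $T$-slots, compatibly with the $\Ga$-parameters --- to construct, for any two profiles, a natural isomorphism between the corresponding coequalizers realizing the permutation, and check that it respects the exact structure, the internal Hom, and the biadditive structure. In either case, once $\otimes^{(j,k)}_{\Ga}$ is identified with the ordinary $\Ga$-tensor, the functor $f_!(M)=T'\otimes_{\Ga}M$ is classical extension of scalars, $f^{*}$ is restriction along $f$, the adjunction $f_!\dashv f^{*}$ of \S\ref{subsec:change-of-scalars} is the usual tensor--hom adjunction, and $\Ga$-flatness in the profile (Definition~\ref{def:gamma-flatness}) becomes ordinary flatness of $T'$ over $T$.

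The third assertion I would handle structurally rather than by re-proving anything. Each theorem of \S\ref{subsec:morita}--\S\ref{subsec:excision-cdh} is deduced from the imported inputs of \S\ref{subsec:standing-assumptions}, so it suffices to note that those inputs degenerate: the positional bar complex $B_\bullet(T',T,M)$ becomes the classical two-sided bar complex, so Proposition~\ref{prop:imported-bar} reduces to the usual statement that the bar construction computes $\mathbf{L}f_!$ (i.e.\ $\operatorname{Tor}$); a tilting bimodule complex as in Proposition~\ref{prop:imported-tilting} becomes a classical tilting complex, recovering Keller's derived Morita invariance; and Proposition~\ref{prop:imported-descent-perf} becomes Thomason--Trobaugh descent for perfect complexes \cite{Thomason1990}. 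Feeding these into the ``Idea''/``Sketch'' arguments unchanged yields the classical outputs: Theorem~\ref{thm:morita-K} gives derived Morita invariance of $K$-theory, Theorem~\ref{thm:base-change} gives flat base-change, Theorem~\ref{thm:projection-formula} gives the classical projection formula, Theorem~\ref{thm:zariski-descent} gives Zariski (hyper)descent and Mayer--Vietoris, Theorem~\ref{thm:excision} gives the localization sequence of a closed immersion, and Theorem~\ref{thm:fpqc} gives fpqc descent.

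The main obstacle is the bookkeeping in the symmetric ternary case: one must check that the isomorphisms between the profile-dependent tensor functors are compatible \emph{simultaneously} with the exact structure, the internal Hom, the change-of-scalars adjunction, and the bar and $S_\bullet$ machinery, so that they genuinely transport the $K$-theoretic statements rather than holding only objectwise. Concretely this amounts to verifying that every construction of \S\ref{sec:comparison} was profile-natural; I expect this to be routine once the symmetry axioms of \cite{PaperE} are spelled out, but it is the step where care is warranted. For $n=2$ the matching is immediate, since there is nothing to permute.
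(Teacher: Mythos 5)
Your proposal is correct and follows essentially the same route as the paper's (very brief) proof: by symmetry the $(j,k)$-balancing collapses to ordinary balancing, slotwise flatness reduces to classical flatness, and the theorems then specialize formally. You supply considerably more detail than the paper — in particular the explicit degeneration of the imported bar/tilting/descent inputs and the profile-naturality caveat in the symmetric ternary case — but this is elaboration of the same argument, not a different one.
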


\begin{proof}
By symmetry, the $(j,k)$-balancing becomes ordinary balancing; flatness
in each positional slot reduces to classical flatness.  The rest follows
formally.
\end{proof}

\subsection{Consequences and computational levers}
\label{subsec:consequences}

\begin{corollary}[Devissage along semisimple strata]
\label{cor:devissage}
If $T$ satisfies the non-commutative Wedderburn–--Artin decomposition of
\cite{PaperG} , then
\[
  \Kspec\!\big({\nTGMod{T}}^{\mathrm{bi}}\big)
   \ \simeq\
  \prod_{i}\ \Kspec\!\big(\,{\nTGMod{M_{n_i}^{(n)}(D_i)}}^{\mathrm{bi}}\big),
\]
and each factor is $K$-equivalent to $K$-theory of $D_i$ via Morita
(\S\ref{subsec:morita}).  Hence $K_\ast(T)\cong \prod_i K_\ast(D_i)$.
\end{corollary}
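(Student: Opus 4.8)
The plan is to combine the imported Wedderburn–Artin devissage input (Proposition~\ref{prop:imported-WA}) with the derived Morita invariance already established in Theorem~\ref{thm:morita-K}. First I would invoke Proposition~\ref{prop:imported-WA}: by hypothesis $T$ admits a finite semisimple decomposition into matrix blocks $M^{(n)}_{n_i}(D_i)$, and that proposition supplies a devissage description of the exact category ${\nTGMod{T}}^{\mathrm{bi}}$ compatible with $\Kspec(-)$. Concretely, the block decomposition yields a direct-product decomposition of ${\nTGMod{T}}^{\mathrm{bi}}$ (equivalently, an orthogonal family of central idempotents in the positional $\Gamma$-structure cutting out the blocks), so that an object of ${\nTGMod{T}}^{\mathrm{bi}}$ is equivalent data to a finite tuple of objects over the individual blocks. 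Applying $\Kspec$ and using that $\Kspec$ sends finite products of exact/Waldhausen categories to finite products of spectra (additivity), I obtain the first displayed equivalence
\[
  \Kspec\!\big({\nTGMod{T}}^{\mathrm{bi}}\big)
   \ \simeq\
  \prod_{i}\ \Kspec\!\big(\,{\nTGMod{M_{n_i}^{(n)}(D_i)}}^{\mathrm{bi}}\big).
\]

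Next I would treat each factor separately. For a fixed block $M^{(n)}_{n_i}(D_i)$, the passage from $D_i$ to its $n$-ary matrix algebra is a positional Morita context: the standard bi-module $(D_i^{n_i}, \,{}^{n_i}\!D_i)$, suitably decorated with the $(j,k)$-positional actions, is a tilting bi-module (concentrated in degree zero) inducing an equivalence $\mathbf{D}({\nTGMod{D_i}}^{\mathrm{bi}}) \simeq \mathbf{D}({\nTGMod{M_{n_i}^{(n)}(D_i)}}^{\mathrm{bi}})$; alternatively one can phrase this via the exact equivalence of projective categories and appeal to Corollary~\ref{cor:positional-morita}. Either way, Theorem~\ref{thm:morita-K} (resp.\ Corollary~\ref{cor:positional-morita}) gives
$\Kspec(\Perf({\nTGMod{M_{n_i}^{(n)}(D_i)}}^{\mathrm{bi}})) \simeq \Kspec(\Perf({\nTGMod{D_i}}^{\mathrm{bi}}))$, and hence the same on the level of $K_\ast$. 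Substituting block by block into the product decomposition and taking homotopy groups (which commute with finite products) yields $K_\ast(T)\cong \prod_i K_\ast(D_i)$.

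The main obstacle I anticipate is verifying that the $n$-ary matrix construction $M^{(n)}_{n_i}(D_i)$ genuinely carries a \emph{positional} Morita context in the fixed $(j,k)$ profile — i.e.\ that the elementary-matrix bi-module and its dual satisfy the slot-sensitive coherence axioms of \S\ref{subsec:modules} and that the counit/unit maps are balanced against the correct slots. Once this is pinned down (it is the content of the ``sense assumed in \S\ref{subsec:consequences}'' clause and of Proposition~\ref{prop:imported-WA}), the derived Morita equivalence is an immediate application of Theorem~\ref{thm:morita-K}. A secondary, more routine point is checking that the central-idempotent decomposition is compatible with admissible monos/epis so that the product decomposition of exact categories is exact — this follows from the definitions in \S\ref{subsec:exactness} since idempotent splittings are automatically admissible.
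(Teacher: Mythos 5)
Your proposal is correct and follows exactly the route the paper intends: Proposition~\ref{prop:imported-WA} supplies the block/product decomposition of ${\nTGMod{T}}^{\mathrm{bi}}$ compatible with $\Kspec(-)$, and Theorem~\ref{thm:morita-K} (equivalently Corollary~\ref{cor:positional-morita}, via the standard column/row bi-module as a degree-zero tilting complex) identifies each matrix-block factor with the $K$-theory of $D_i$. Your added care about the positional $(j,k)$-coherence of the elementary-matrix bimodule and the admissibility of the idempotent splittings is a useful elaboration of details the paper leaves implicit in the ``sense assumed in \S\ref{subsec:consequences}'' clause.
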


\begin{corollary}[Matrix invariance]
\label{cor:matrix-invariance}
For every $m\ge 1$,
\[
  \Kspec\!\big({\nTGMod{T}}^{\mathrm{bi}}\big)
   \ \simeq\
  \Kspec\!\big({\nTGMod{M_m^{(n)}(T)}}^{\mathrm{bi}}\big).
\]
\end{corollary}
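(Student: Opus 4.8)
The plan is to deduce Corollary~\ref{cor:matrix-invariance} from positional Morita invariance as recorded in Corollary~\ref{cor:positional-morita} (equivalently, from Theorem~\ref{thm:morita-K} via Proposition~\ref{prop:imported-tilting}), taking the Morita datum to be the standard matrix progenerator. Concretely, I would set $P := T^{m}$, the bi-finite slot-sensitive $\Ga$-module given by a direct sum of $m$ copies of $T$, equipped with the left positional action of $M_m^{(n)}(T)$ in slot $j$ (matrix-on-column multiplication, balanced by $\mu$ and the $\Ga$-structure exactly as in \S\ref{subsec:positional-tensor}) and the right positional action of $T$ in slot $k$; this makes $P$ an object of ${\nTGMod{M_m^{(n)}(T)\text{--}T}}^{\mathrm{bi}}$. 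Dually let $P^{\vee}:=\underline{\Hom}_{\Ga}(P,T)$ carry the transposed $(T,\,M_m^{(n)}(T))$-bi-structure. As a finite direct sum of copies of $T$, $P$ is a finitely generated projective generator of ${\nTGMod{T}}^{\mathrm{bi}}$, hence in particular a perfect bi-module complex concentrated in degree $0$.

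The next step is to verify that $(P,P^{\vee})$ is a \emph{strict} positional Morita context in the $(j,k)$ profile: the evaluation and coevaluation maps, computed with the fixed slot conventions, induce bi-module isomorphisms
\[
P \otimes^{(j,k)}_{T} P^{\vee}\ \iso\ M_m^{(n)}(T),
\qquad
P^{\vee}\otimes^{(j,k)}_{M_m^{(n)}(T)} P\ \iso\ T,
\]
the first being essentially the defining identity $\underline{\Hom}_{\Ga}(P,P)\cong M_m^{(n)}(T)$ for the $n$-ary $\Ga$-matrix semiring. It then follows that the exact functors $M\mapsto P^{\vee}\otimes^{(j,k)}_{T}M$ and $N\mapsto P\otimes^{(j,k)}_{M_m^{(n)}(T)}N$ are mutually quasi-inverse equivalences of Quillen-exact categories ${\nTGMod{T}}^{\mathrm{bi}}\simeq{\nTGMod{M_m^{(n)}(T)}}^{\mathrm{bi}}$, restricting to an equivalence on the exact subcategories $\Proj(-)$; and one checks this equivalence is $\Ga$-linear and intertwines the $(j,k)$-positional tensor products and internal Homs up to coherent natural isomorphism, which is precisely the hypothesis required by Corollary~\ref{cor:positional-morita}.

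With the context in place the conclusion follows in either of two equivalent ways. One may apply Corollary~\ref{cor:positional-morita} with $S=M_m^{(n)}(T)$ directly, getting the weak equivalence
\[
\Kspec\!\big({\nTGMod{T}}^{\mathrm{bi}}\big)\ \simeq\ \Kspec\!\big({\nTGMod{M_m^{(n)}(T)}}^{\mathrm{bi}}\big)
\]
from functoriality of $\Kspec$ on exact equivalences; alternatively, one regards $\Ecal:=P$ in degree $0$ as a tilting bi-module complex, invokes Proposition~\ref{prop:imported-tilting} to obtain the derived equivalence $\mathbf{D}({\nTGMod{T}}^{\mathrm{bi}})\xrightarrow{\ \simeq\ }\mathbf{D}({\nTGMod{M_m^{(n)}(T)}}^{\mathrm{bi}})$ restricting to perfect objects, and then quotes Theorem~\ref{thm:morita-K}: the $S_\bullet$-additivity/fibration argument there applies verbatim because the tilting object and its dual preserve cofibrations and weak equivalences. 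Passing to homotopy groups then gives matrix invariance for every $K_i$, $i\ge 0$.

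The one genuinely nontrivial step --- all of the homotopy theory being formal here, and the statement being automatic in the binary case --- is the positional bookkeeping behind the second paragraph. I expect the main work to be: confirming that the $n$-ary $\Ga$-multiplication on $m\times m$ matrices over $T$ satisfies the full $n$-ary associativity/coherence scheme of \S\ref{subsec:nary-gamma-semirings}, so that $M_m^{(n)}(T)$ is a bona fide object of our category with module category obeying the bi-finite slot-sensitive conventions of \S\ref{subsec:modules}; and checking that the left and right actions on $T^{m}$ are situated in slots $j$ and $k$ precisely so that $\otimes^{(j,k)}_{T}$ performs the balancing realizing the two displayed isomorphisms and so that the Morita functor respects the fixed $(j,k)$-profile tensor and $\underline{\Hom}_{\Ga}$. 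No homotopy-theoretic obstruction arises, the tilting datum being concentrated in a single degree and automatically perfect; thus the result is a clean corollary of the Morita machinery of \S\ref{subsec:morita}.
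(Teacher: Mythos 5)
Your proposal is correct and follows essentially the route the paper intends: the corollary is stated in \S\ref{subsec:consequences} without its own proof, as a direct consequence of the Morita machinery of \S\ref{subsec:morita} (Theorem~\ref{thm:morita-K}/Corollary~\ref{cor:positional-morita}), which is exactly what you instantiate with the standard column progenerator $P=T^{m}$ and its dual. Your write-up in fact supplies the positional bookkeeping (the $(j,k)$-balanced Morita context and the identification $\underline{\Hom}_{\Ga}(P,P)\cong M_m^{(n)}(T)$) that the paper leaves implicit.
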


\begin{corollary}[Triangular/upper-triangular extensions]
\label{cor:triangular}
Let $U$ be the upper-triangular $n$-ary $\Ga$-semiring built from $T$
and a $T$–$T$ bimodule along $\mu$.  Then there is a homotopy fibration
\[
  \Kspec\!\big({\nTGMod{T}}^{\mathrm{bi}}\big)
  \longrightarrow
  \Kspec\!\big({\nTGMod{U}}^{\mathrm{bi}}\big)
  \longrightarrow
  \Kspec\!\big({\nTGMod{T}}^{\mathrm{bi}}\big),
\]
yielding a long exact sequence on $K_\ast$ (Waldhausen additivity).
\end{corollary}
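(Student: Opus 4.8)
The plan is to present $U$ as the $n$-ary $\Ga$-analogue of the triangular matrix object $\left(\begin{smallmatrix}T & M\\ 0 & T\end{smallmatrix}\right)$, equip every bi-$\Ga$-module over $U$ with a canonical two-step admissible filtration whose associated-graded pieces are bi-$\Ga$-modules over $T$, and then read off the fibration from Waldhausen's additivity theorem applied to the resulting short exact sequence of endofunctors of ${\nTGMod{U}}^{\mathrm{bi}}$.

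First I would fix notation: write $U$ with its $n$-ary $\Ga$-multiplication induced from $\mu$ together with the $T$--$T$ bimodule structure of $M$ placed in the chosen $(j,k)$ slots, and record the two orthogonal idempotent-like elements $e_1,e_2\in U$ with $e_1Ue_1\cong T\cong e_2Ue_2$, $e_1Ue_2\cong M$, and $e_2Ue_1=0$. Using the positional action axioms, an object $P\in{\nTGMod{U}}^{\mathrm{bi}}$ splits, as an underlying commutative monoid with compatible actions, into the ``matrix-module'' data of its two diagonal coordinate pieces together with the off-diagonal structure map along $\otimes^{(j,k)}_{\Ga}$ coming from $M$; in particular $e_1P\subseteq P$ is a $\Ga$-admissible sub-object, annihilated on the slot-$j$ side by $e_2$, with quotient $P/e_1P\cong e_2P$ annihilated by $e_1$, and $P\cong e_1P\oplus e_2P$ as monoids. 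Since exactness in ${\nTGMod{U}}^{\mathrm{bi}}$ is tested componentwise on underlying monoids (\S\ref{subsec:exactness}), the two coordinate-projection functors $\rho_1,\rho_2:{\nTGMod{U}}^{\mathrm{bi}}\to{\nTGMod{T}}^{\mathrm{bi}}$ and the two extension-by-zero functors $\iota_1,\iota_2:{\nTGMod{T}}^{\mathrm{bi}}\to{\nTGMod{U}}^{\mathrm{bi}}$ are exact/Waldhausen, with $\rho_a\iota_b=\delta_{ab}\,\id$.

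Next I would observe that the canonical inclusion $e_1P\hookrightarrow P$ and projection $P\twoheadrightarrow e_2P$ are natural in $P$ and degreewise admissible, so they assemble into a short exact sequence of exact endofunctors of ${\nTGMod{U}}^{\mathrm{bi}}$,
\[
\iota_1\rho_1\ \rightarrowtail\ \id\ \twoheadrightarrow\ \iota_2\rho_2 .
\]
Waldhausen additivity (the same input invoked in \S\ref{subsec:morita} and throughout \S\ref{subsec:consequences}) then gives $\id_{*}=(\iota_1)_{*}(\rho_1)_{*}+(\iota_2)_{*}(\rho_2)_{*}$ on $\Kspec({\nTGMod{U}}^{\mathrm{bi}})$. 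Combined with $\rho_a\iota_b=\delta_{ab}\,\id$, this shows that $((\rho_1)_{*},(\rho_2)_{*})$ and $(\iota_1)_{*}\oplus(\iota_2)_{*}$ are mutually inverse, whence
\[
\Kspec({\nTGMod{U}}^{\mathrm{bi}})\ \simeq\ \Kspec({\nTGMod{T}}^{\mathrm{bi}})\times\Kspec({\nTGMod{T}}^{\mathrm{bi}}).
\]
In particular $\Kspec({\nTGMod{T}}^{\mathrm{bi}})\xrightarrow{(\iota_1)_{*}}\Kspec({\nTGMod{U}}^{\mathrm{bi}})\xrightarrow{(\rho_2)_{*}}\Kspec({\nTGMod{T}}^{\mathrm{bi}})$ is a split homotopy fibration ($(\rho_2)_{*}(\iota_1)_{*}=(\rho_2\iota_1)_{*}=0$, split by $(\iota_2)_{*}$), and the displayed long exact sequence on $K_\ast$ follows, here degenerating to $K_\ast(U)\cong K_\ast(T)\oplus K_\ast(T)$.

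I expect the genuine work to lie in the decomposition step, not the additivity step. One must verify, in the slot-sensitive, additive-inverse-free setting, that the $e_1/e_2$ decomposition of a bi-$\Ga$-module over $U$ is functorial, that it is \emph{degreewise $\Ga$-admissible} so that the filtration of $\id$ really lives in the Waldhausen structure of \S\ref{subsec:derived}, and --- most delicately --- that each graded piece is a genuine bi-$\Ga$-module over $T$ rather than one retaining residual $U$-structure on the unbalanced slot; this is the $n$-ary analogue of the classical fact that corner pieces of a triangular ring reduce to modules over the corner, and it is precisely here that the interplay between the two positional slots $j,k$ and the placement of $M$ must be controlled. Once that bookkeeping is in place the additivity step is purely formal, and one may alternatively phrase the same conclusion as a recollement of ${\nTGMod{U}}^{\mathrm{bi}}$ along the Serre subcategory $\iota_1({\nTGMod{T}}^{\mathrm{bi}})$ with quotient ${\nTGMod{T}}^{\mathrm{bi}}$, invoking the localization inputs of \S\ref{subsec:standing-assumptions}.
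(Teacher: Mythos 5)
Your argument is essentially the proof the paper intends: the corollary carries no proof beyond the parenthetical appeal to Waldhausen additivity, and your two-step filtration $\iota_1\rho_1\ \rightarrowtail\ \id\ \twoheadrightarrow\ \iota_2\rho_2$ of the identity functor, combined with additivity and the relations $\rho_a\iota_b=\delta_{ab}\,\id$, is exactly the standard way to realize that appeal, and in fact yields the stronger split form $\Kspec({\nTGMod{U}}^{\mathrm{bi}})\simeq\Kspec({\nTGMod{T}}^{\mathrm{bi}})\times\Kspec({\nTGMod{T}}^{\mathrm{bi}})$. The caveats you flag are indeed where all the real content sits --- making the Peirce-type decomposition precise in the unit-free, $n$-ary slot-sensitive setting, and noting that if one works with projective or perfect objects rather than the full exact category then extension-by-zero does not preserve them and must be replaced by $-\otimes^{(j,k)}_{\Ga}e_1U$ together with its (derived) cofiber --- and the paper supplies no details on these points either.
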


\begin{remark}[Roadmap to Section~6 / \cite{PaperI}]
Corollaries~\ref{cor:devissage}–\ref{cor:triangular}, together with
Theorems~\ref{thm:zariski-descent}, \ref{thm:excision}, and
\ref{thm:fpqc}, provide a full computational toolkit: reduce to semisimple
pieces, use matrix/trivial extensions, and compute by descent on
$\SpecGnC{T}$.  These results will anchor the explicit calculations and
examples (matrix blocks, endomorphism $\Ga$-objects, finite models) in the
next paper.
\end{remark}

\section{Geometric and Homotopy Interpretation}
\label{sec:geometry}

In this section we place the algebraic $K$--theory of non--commutative
$n$-ary~$\Gamma$--semirings in its natural \emph{geometric and homotopical}
setting.  The guiding principle is that the $K$--theory spectrum is the
\emph{derived cohomology of the moduli $\infty$--stack of
$\Gamma$--perfect complexes.}

\subsection{The moduli $\infty$--stack of $\Gamma$--perfect complexes}

\begin{definition}[Perfect $\Gamma$--complexes]
For an affine object $X=\SpecGnC{T}$ \cite{ToenVaquie2007, Kontsevich2000}, define
\[
\Perf_\Gamma(X)
  := \text{the full stable $\infty$--subcategory of }
     \mathbf{D}\!\big(\QCoh(X)\big)
     \text{ generated by compact dualizable objects.}
\]
Equivalently, an object of $\Perf_\Gamma(X)$ is locally quasi--isomorphic
to a bounded complex of finitely generated projective bi--$\Gamma$--modules
with positional actions .
\end{definition}

\begin{definition}[Moduli $\infty$--stack]
The assignment
\[
\mathsf{Perf}_\Gamma :
  (\AffnGamma)^{\mathrm{op}} \longrightarrow \mathbf{Cat}_\infty^{\mathrm{st}},
  \qquad
  T' \longmapsto \Perf_\Gamma(\SpecGnC{T'})
\]
is a prestack for the $\Gamma$--Zariski topology, where
$\mathbf{Cat}_\infty^{\mathrm{st}}$ denotes small stable
idempotent--complete $\infty$--categories \cite{Lurie2017}.
\end{definition}

\begin{theorem}[Descent for $\mathsf{Perf}_\Gamma$]
\label{thm:perf-descent}
$\mathsf{Perf}_\Gamma$ satisfies fpqc and $\Gamma$--Zariski descent:
for any covering $\{U_i\!\to\!X\}$ in the non--commutative $\Gamma$--site,
the canonical diagram
\[
\Perf_\Gamma(X)
   \longrightarrow
   \lim\!\Big(
     \prod\nolimits_i \Perf_\Gamma(U_i)
     \rightrightarrows
     \prod\nolimits_{i,j}\Perf_\Gamma(U_i\times_X U_j)
     \triplearrows\cdots
   \Big)
\]
is an equivalence of stable $\infty$--categories.
\end{theorem}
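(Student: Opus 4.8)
The plan is to reduce the statement to the affine case and there to combine the $1$-categorical (triangulated) descent already granted by Proposition~\ref{prop:imported-descent-perf} with an $\infty$-categorical Barr--Beck argument upgrading it to an equivalence of stable $\infty$-categories. Since $\mathsf{Perf}_\Gamma$ is a presheaf on $\AffnGamma$ and the basic opens $D(a,\boldsymbol{\gamma})$ form a basis for the $\Ga$-Zariski topology (Setup~\ref{setup:nc-geometry}(iii)), it suffices to verify descent for (i) a single $\Ga$-Zariski cover of an affine $X=\SpecGnC{T}$ by basic opens, and (ii) an fpqc cover $p\colon\SpecGnC{T'}\to\SpecGnC{T}$ arising from a $\Ga$-flat morphism $f\colon(T,\Ga)\to(T',\Ga')$; the general case follows by refining an arbitrary cover and using that a limit of descent-satisfying diagrams again satisfies descent. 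Case~(i) is precisely the hyperdescent clause of Proposition~\ref{prop:imported-descent-perf}, so the remaining work lies in case~(ii) and in checking that the limits involved are computed in $\mathbf{Cat}_\infty^{\mathrm{st}}$.

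For case~(ii) I would first establish descent at the level of quasi-coherent complexes. Extension of scalars $f_{!}\colon\Dcat(\QCoh(X))\to\Dcat(\QCoh(\SpecGnC{T'}))$ preserves all colimits and, by faithful flatness of $f$, is conservative; moreover it preserves geometric realizations of $f_{!}$-split simplicial objects, so the $\infty$-categorical Barr--Beck--Lurie theorem applies and exhibits $\Dcat(\QCoh(X))$ as the $\infty$-category of comodules over the descent comonad of $p$. Unwinding, this says exactly that $T'\mapsto\Dcat(\QCoh(\SpecGnC{T'}))$ carries the \v{C}ech nerve $\check{C}(p)_\bullet$ to a limit diagram. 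Because the restriction functors along $\Ga$-Zariski and fpqc morphisms are exact (Setup~\ref{setup:nc-geometry}(ii)), this limit is formed in $\mathbf{Cat}_\infty^{\mathrm{st}}$, not merely in $\mathbf{Cat}_\infty$.

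It then remains to cut down from $\QCoh$-descent to $\Perf_\Gamma$-descent, i.e.\ to show the comparison functor restricts to an equivalence on perfect objects. One inclusion is immediate: perfect objects restrict to perfect objects by Setup~\ref{setup:nc-geometry}(ii). For the converse I would show that an object $E\in\Dcat(\QCoh(X))$ whose restriction to each $U_i$ (resp.\ to $\SpecGnC{T'}$) is perfect is itself perfect. Dualizability descends for formal reasons, since $\QCoh$ has descent and its positional tensor structure $\otimes^{(j,k)}_{\Ga}$ is compatible with restriction, so a duality datum can be glued. Finite generation (compactness) is the substantive point: locally $E$ is quasi-isomorphic to a bounded complex of finitely generated projective bi-$\Ga$-modules, and using quasi-compactness of the cover together with the two-sided bar resolution of \S\ref{subsec:bar} and Proposition~\ref{prop:imported-bar} --- which identifies $f_{!}$ with its derived functor on perfect complexes under $\Ga$-flatness --- one assembles a single globally bounded complex of finitely generated projectives representing $E$. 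Combining the two inclusions yields the asserted equivalence $\Perf_\Gamma(X)\simeq\lim\bigl(\prod_i\Perf_\Gamma(U_i)\rightrightarrows\prod_{i,j}\Perf_\Gamma(U_i\times_XU_j)\triplearrows\cdots\bigr)$.

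The main obstacle is precisely this last ``locally perfect $\Rightarrow$ globally perfect'' step in the fpqc case. In classical algebraic geometry it relies on Noetherian approximation or on refining to a finite faithfully flat cover; in the present semiring setting, where additive inverses are unavailable and one works in a Quillen-exact rather than abelian category, the argument must instead be driven entirely by the bar-resolution machinery of \S\ref{subsec:bar} and by the exactness of positional tensoring with the $\Ga$-flat object $T'$ (Lemma~\ref{lem:exact-waldhausen}). A secondary point is convergence of the \v{C}ech totalization: to avoid unbounded cohomological amplitude I would either impose the quasi-compactness/quasi-separatedness hypotheses already in force in Theorem~\ref{thm:zariski-descent}, or pass to the hypercomplete localization --- which is harmless, since Proposition~\ref{prop:imported-descent-perf} already furnishes hyperdescent rather than mere \v{C}ech descent.
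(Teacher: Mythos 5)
The paper contains no argument for this theorem at all: within the paper's own logic, Theorem~\ref{thm:perf-descent} is simply the $\infty$--categorical restatement of the imported standing assumption Proposition~\ref{prop:imported-descent-perf}, whose proof is deferred to the companion manuscripts. Your case~(i) (the $\Ga$--Zariski case) is exactly that citation, so there you and the paper coincide. Your case~(ii) is a genuinely different route: rather than quoting the same proposition (whose second clause already covers the \v{C}ech nerve of an fpqc cover induced by a $\Ga$--flat morphism, so within the paper no further argument is needed), you attempt to reprove it via Barr--Beck--Lurie comonadic descent for $\Dcat(\QCoh)$ followed by cutting down to perfect objects through gluing of duality data and a bar-resolution argument for finiteness. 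What this buys is an internal proof independent of the external input, and your plan follows the standard derived-algebraic-geometry template; what it costs is that its load-bearing steps are precisely the content the paper chose to import. Concretely: (a) the comonadicity step needs $\Dcat(\QCoh(X))$ to be a presentable stable $\infty$--category with $f_{!}$ colimit-preserving and conservative, none of which is supplied by Setup~\ref{setup:nc-geometry}, which only posits an exact/derived category with exact restriction functors; (b) the ``dualizability descends'' step tacitly uses a well-behaved monoidal structure, while $\otimes^{(j,k)}_{\Ga}$ is positional and generally non-symmetric, so left/right duals must be handled with care; and (c) the ``locally perfect $\Rightarrow$ globally perfect'' step, which you correctly single out as the main obstacle, is left as a flagged difficulty rather than resolved --- in the semiring setting there is no Noetherian approximation, and it is not established that bounded complexes of finitely generated projectives can be glued from a $\Ga$--flat cover. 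So your proposal is a reasonable and more ambitious blueprint, but as written it ultimately rests on the same unproved inputs the paper imports, now relocated inside your sketch instead of cited.
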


\subsection{From perfect complexes to the $K$--theory spectrum}

Let $\iota\Perf_\Gamma(X)$ denote the core (maximal $\infty$--groupoid)
of $\Perf_\Gamma(X)$ and $\Omega^\infty K_\Gamma(X)$ its group completion.

\begin{definition}[Geometric $K$--theory]
Define the \emph{$\Gamma$--algebraic $K$--theory spectrum} of $X$ by
\[
K_\Gamma(X)
   \simeq K\!\big(\Perf_\Gamma(X)\big)
   \simeq
   \Omega^\infty\!\text{--group completion of }\iota\Perf_\Gamma(X),
\]
where $K(-)$ is the $\infty$--categorical Waldhausen/Barwick
$K$--theory functor\cite{Barwick2016}.
\end{definition}

\begin{theorem}[Model comparison]
\label{thm:model-comparison}
For $X=\SpecGnC{T}$ the following constructions of $K_\Gamma(X)$
are canonically equivalent:
\begin{enumerate}[label=(\alph*)]
\item Quillen's $Q$--construction on finitely generated projective
      bi--$\Gamma$--modules;
\item Waldhausen's $S_\bullet$--construction on perfect complexes with
      admissible monomorphisms as cofibrations and quasi--isomorphisms
      as weak equivalences;
\item the $\infty$--categorical $K$--theory of $\Perf_\Gamma(X)$ \cite{Barwick2016}.
\end{enumerate}
\end{theorem}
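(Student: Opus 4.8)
The plan is to prove the two comparisons $(a)\simeq(b)$ and $(b)\simeq(c)$ separately, each implemented by a functor of Waldhausen categories (respectively, of stable $\infty$-categories), so that the resulting zigzag of equivalences is canonical and natural in $T$. The first is classical homological algebra of exact categories; the second is the passage from a Waldhausen model to its underlying stable $\infty$-category. Throughout we use the fixed bounded derived model of Setup~\ref{setup:global-profile} and the bar-resolution machinery of \S\ref{subsec:bar}.

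For $(a)\simeq(b)$: let $\mathcal{P}:=\Proj({\nTGMod{T}}^{\mathrm{bi}})$ be the exact category of finitely generated projective bi-$\Ga$-modules of \S\ref{subsec:exactness}, which is idempotent-complete because a direct summand of a finitely generated projective is again one. By the definition of perfect objects in \S\ref{subsec:derived}, the inclusion of $\Ch^{b}(\mathcal{P})$ (bounded complexes of projectives, with degreewise admissible monos as cofibrations and quasi-isomorphisms as weak equivalences) into $\Perf({\nTGMod{T}}^{\mathrm{bi}})$ satisfies Waldhausen's approximation property, hence induces a weak equivalence on $K$-theory; and the Gillet--Waldhausen theorem for $\mathcal{P}$ gives $\Kspec_{Q}(\mathcal{P})\simeq\Kspec_{S_\bullet}\!\bigl(\Ch^{b}(\mathcal{P})_{\mathrm{qis}}\bigr)$. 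Composing yields $\Kspec_{Q}(\mathcal{P})\simeq\Kspec_{S_\bullet}\!\bigl(\Perf({\nTGMod{T}}^{\mathrm{bi}})\bigr)$, and Proposition~\ref{prop:imported-model-comparison} identifies the latter with the $Q$-construction on $\Perf$, so $(a)$ and $(b)$ sit in one coherent diagram. The hypotheses used here --- $\mathcal{P}$ closed under kernels of admissible epimorphisms between projectives, and every perfect complex admitting a finite projective resolution --- are part of the standing assumptions of \S\ref{subsec:standing-assumptions}.

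For $(b)\simeq(c)$: the geometric definition of $\Perf_\Ga(X)$ in Section~\ref{sec:geometry} records an equivalence between $\Perf_\Ga(X)$ --- the full stable $\infty$-subcategory of $\mathbf{D}(\QCoh(X))$ on the compact dualizable objects --- and the $\infty$-category of bounded complexes of finitely generated projective bi-$\Ga$-modules, i.e.\ the Dwyer--Kan localization of the Waldhausen category $\Perf({\nTGMod{T}}^{\mathrm{bi}})$ at its quasi-isomorphisms; in particular $\Perf_\Ga(X)$ is stable and idempotent-complete. It then remains to compare the classical $S_\bullet$-$K$-theory of $\Perf({\nTGMod{T}}^{\mathrm{bi}})$ with the $\infty$-categorical (Barwick, Blumberg--Gepner--Tabuada) $K$-theory of this localization. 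This is the standard Waldhausen-to-$\infty$-category comparison: for a Waldhausen category satisfying the $2$-out-of-$3$ axiom, admitting functorial factorizations of weak equivalences into a cofibration followed by a weak equivalence, and having cofibrations with cofibrant cofibers, the classical and $\infty$-categorical $K$-theories of its localization are canonically equivalent. We verify these axioms for $\Perf({\nTGMod{T}}^{\mathrm{bi}})$ by producing mapping cylinders and cones from the admissible splitting data of \S\ref{subsec:bar}, and we use Proposition~\ref{prop:imported-bar} to see that localizing at chain homotopy equivalences and at quasi-isomorphisms give the same $\infty$-category on perfect objects, so the choice of derived model in Setup~\ref{setup:global-profile} is immaterial.

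I expect the main obstacle to be exactly the verification of the factorization and cylinder axioms in the non-additive semiring context. In an abelian or additive setting these come for free from mapping cones; with no additive inverses, the cylinder of a chain map must instead be built from chosen admissible splittings rather than as a difference of maps, as is already anticipated in the description of the bar construction in \S\ref{subsec:bar}. I would handle this by working throughout with the chain-homotopy-equivalence model --- where the cylinders are produced exactly as in Waldhausen's treatment of complicial exact categories --- and only afterwards transporting to the quasi-isomorphism model via Proposition~\ref{prop:imported-bar}; a point to monitor along the way is that idempotents split in the homotopy category of bounded projective complexes, which is what guarantees that $\Perf({\nTGMod{T}}^{\mathrm{bi}})$ matches the idempotent-complete $\Perf_\Ga(X)$ on the nose rather than only in degrees $\ge 1$ (Thomason's cofinality theorem being the fallback in the latter case). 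Naturality in $T$ of all three identifications is automatic, since resolution, Waldhausen approximation, $S_\bullet$, and Dwyer--Kan localization are all functorial, and this assembles the asserted canonical chain of equivalences.
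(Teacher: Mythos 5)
Your proposal is sound in outline, but note that the paper itself contains no proof of Theorem~\ref{thm:model-comparison}: the statement is asserted without argument, with the comparison (a)$\simeq$(b) delegated to the imported Proposition~\ref{prop:imported-model-comparison} and the identification with (c) treated as essentially definitional via $K_\Gamma(X)\simeq K(\Perf_\Gamma(X))$. So you are supplying an argument where the paper supplies a citation. Your route --- Gillet--Waldhausen plus the approximation theorem for (a)$\simeq$(b), then Dwyer--Kan localization and the Waldhausen-to-$\infty$-categorical comparison (Barwick, Blumberg--Gepner--Tabuada) for (b)$\simeq$(c) --- is the standard one and is consistent with what the paper implicitly relies on; what it buys is that it makes explicit the one genuinely nontrivial point, which you correctly isolate. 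In the semiring setting the categories ${\nTGMod{T}}^{\mathrm{bi}}$ are only commutative-monoid enriched, so mapping cones, cylinders, chain homotopies, and even the homology underlying ``quasi-isomorphism'' cannot be built from differences of maps, and the classical statements of Gillet--Waldhausen, approximation, and the $\infty$-categorical comparison are proved for additive exact or stable inputs. Treat this not as a point to ``monitor'' but as the actual mathematical content of the theorem: unless those theorems are re-proved (or the required cylinder/factorization and homology structures are constructed) for the Quillen-exact-in-the-semiring-sense categories of this paper, your steps inherit exactly the dependence that the paper buries in \S\ref{subsec:standing-assumptions}. Your plan to work in the chain-homotopy model with splitting data from the bar construction, and to invoke Thomason cofinality for any idempotent-completion discrepancy on $K_0$, is the right place to concentrate that effort, and naturality in $T$ then follows as you say.
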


\begin{corollary}[Geometric meaning of $K_0$ and $K_1$]
\[
K^\Gamma_0(X)\cong\pi_0\!\big(\Omega^\infty K_\Gamma(X)\big),\qquad
K^\Gamma_1(X)\cong\pi_1\!\big(\Omega^\infty K_\Gamma(X)\big),
\]
where $K^\Gamma_0(X)$ is the Grothendieck group of perfect objects and
$K^\Gamma_1(X)$ the Whitehead group of automorphisms in $\Perf_\Gamma(X)$.
\end{corollary}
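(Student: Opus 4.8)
The plan is to reduce both isomorphisms to the standard low-degree computations of algebraic $K$-theory, invoking Theorem~\ref{thm:model-comparison} to pass freely between Quillen's $Q$-construction on $\Proj({\nTGMod{T}}^{\mathrm{bi}})$, Waldhausen's $S_\bullet$-construction on $\Perf({\nTGMod{T}}^{\mathrm{bi}})$, and the $\infty$-categorical $K$-theory of $\Perf_\Gamma(X)$. For $K_0$ one works in the $S_\bullet$-model: by Waldhausen's computation of $\pi_0$, $\pi_0\Kspec(\Perf_\Gamma(X))$ is the free abelian group on equivalence classes of objects modulo the relation $[Y]=[Z']+[Z'']$ for every cofiber sequence $Z'\to Y\to Z''$ \cite{Waldhausen1985}. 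Since $\Perf_\Gamma(X)$ is stable and idempotent-complete (Definition of $\Perf_\Gamma$), cofiber sequences coincide with exact triangles, so this presentation is exactly the triangulated Grothendieck group of the category of $\Gamma$-perfect complexes, which is what is meant by $K_0^\Gamma(X)$. A brief compatibility check, using the resolution/cofinality equivalence between $\Proj$ and $\Perf$ packaged into Theorem~\ref{thm:model-comparison}(a)--(b) together with Proposition~\ref{prop:imported-model-comparison}, shows that the same group arises from finitely generated projective bi-$\Gamma$-modules, so no choice of generators is privileged.

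For $K_1$ I would apply the standard presentation of $\pi_1$ of the $K$-theory of a Waldhausen (equivalently, stable idempotent-complete $\infty$-) category: $K_1$ is generated by weak self-equivalences (automorphisms in the homotopy category) of objects, subject to (i) $[\id_Z]=0$, (ii) $[f\circ g]=[f]+[g]$, and (iii) the block/triangular additivity relation identifying the class of an automorphism of an extension with the sum of the classes of the induced automorphisms of the sub- and quotient-object; this is the exact-category form in Weibel's $K$-book, transported through Proposition~\ref{prop:imported-model-comparison} \cite{Weibel2013}. Applied to $\Perf_\Gamma(X)$ this group is, by definition, the Whitehead group of automorphisms in $\Perf_\Gamma(X)$, i.e.\ $K_1^\Gamma(X)$. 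When $X=\SpecGnC{T}$ is affine, restricting the presentation to $\Proj({\nTGMod{T}}^{\mathrm{bi}})$ and using the resolution equivalence $\Proj\simeq\Perf$ of Theorem~\ref{thm:model-comparison} recovers the familiar stabilized-automorphism description $\varinjlim_P\operatorname{Aut}(P)^{\mathrm{ab}}$ — a useful consistency check, but not needed for the statement. The two displayed isomorphisms then follow by unwinding the definitions, since $\Omega^\infty K_\Gamma(X)$ is the connective $K$-theory space.

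The step I expect to be the main obstacle is establishing the $K_1$-presentation in the present semiring context. The classical derivation of $\pi_1\cong$ (automorphism presentation) is typically carried out for exact categories embedded in abelian ambient categories, whereas ${\nTGMod{T}}^{\mathrm{bi}}$ is only Quillen-exact (\S\ref{subsec:exactness}); one must verify that the inputs actually used — Waldhausen additivity (already invoked in \S\ref{subsec:standing-assumptions}), splitting of admissible exact sequences of projectives, and compatibility of the positional $\Gamma$-actions with the block-triangular manipulations — are all available. The last point is immediate from additivity of $\mu$ in each slot (\S\ref{subsec:nary-gamma-semirings}) and from the biadditivity of $\otimes^{(j,k)}_{\Gamma}$ recorded in Proposition~\ref{prop:imported-positional-tensor} and \S\ref{subsec:positional-tensor}; once these are in hand the argument is formal, and the remaining work is the routine bookkeeping of matching generators and relations on the two sides.
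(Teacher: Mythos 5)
The paper offers no argument for this corollary at all---it is presented as a definitional unwinding of Theorem~\ref{thm:model-comparison} (connectivity of $K_\Gamma(X)$ plus the identification of $\pi_0$ and $\pi_1$)---so the only question is whether your reconstruction is sound. Your $K_0$ half is fine and is surely what the author intends: Waldhausen's computation of $\pi_0$ of the $S_\bullet$-construction gives the free abelian group on objects modulo cofiber sequences, which for the stable, idempotent-complete category $\Perf_\Gamma(X)$ is precisely the Grothendieck group of perfect objects, and Proposition~\ref{prop:imported-model-comparison} lets you compare with $\Proj({\nTGMod{T}}^{\mathrm{bi}})$ in the affine case.

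The $K_1$ half, however, rests on a claim that is not a theorem in the stated generality. There is no ``standard presentation'' of $\pi_1$ of the $K$-theory of an arbitrary Waldhausen (or stable idempotent-complete) category by weak self-equivalences subject only to your relations (i)--(iii), and Weibel's book does not assert one: the Bass-style automorphism description of $K_1$ is valid for split exact categories (whence the $\varinjlim_P\operatorname{Aut}(P)^{\mathrm{ab}}$ picture over a ring), whereas for general exact or Waldhausen categories the correct presentations are of Nenashev or Muro--Tonks type (double short exact sequences, resp.\ pairs of cofiber sequences), and the naive ``Whitehead group of automorphisms'' comparison map (Gersten--Sherman) is in general neither surjective nor injective. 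So as written, your argument either proves the $K_1$ isomorphism only in the affine split case reached through $\Proj\simeq\Perf$ and cofinality, or it silently \emph{defines} the Whitehead group of $\Perf_\Gamma(X)$ to be $\pi_1$, which makes that half of the statement a tautology rather than a theorem. To close the gap you must either (a) take the latter definitional route explicitly (which is arguably all the paper intends, since it never defines $K^\Gamma_1(X)$ independently), or (b) state which presentation of automorphism classes you use and prove the comparison, e.g.\ reduce by Zariski descent (Theorem~\ref{thm:zariski-descent}) to affines, use the resolution/cofinality step to replace $\Perf$ by $\Proj$, and then invoke the split-exact Bass theorem---noting that the semiring context adds a further wrinkle, since even $\Proj({\nTGMod{T}}^{\mathrm{bi}})$ need not behave like modules over a ring without additive inverses, so the $GL/E$-type arguments need the admissible-splitting hypotheses you mention rather than being ``routine bookkeeping.''
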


\subsection{Additivity, fibration, and localization}

\begin{theorem}[Additivity]
If $\mathcal{A}\hookrightarrow\mathcal{B}\twoheadrightarrow\mathcal{C}$
is an exact sequence of stable $\infty$--categories arising from a
recollement of quasi--coherent subcategories on $\SpecGnC{T}$, then
\[
K_\Gamma(\mathcal{B})
   \simeq
   K_\Gamma(\mathcal{A})\ \vee\ K_\Gamma(\mathcal{C})
   \quad\text{in }\mathbf{Sp}.
\]
\end{theorem}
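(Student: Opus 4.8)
The plan is to deduce this from two facts already in place: that $K_\Gamma(-)\simeq K(-)$ carries exact sequences of small stable idempotent--complete $\infty$--categories to cofiber sequences of spectra, and that the recollement supplies a canonical splitting of the resulting cofiber sequence, so that the cofiber sequence becomes a wedge (equivalently a product) in $\mathbf{Sp}$.

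First I would record the reduction to the perfect setting. A recollement of quasi--coherent subcategories on $\SpecGnC{T}$ consists of fully faithful inclusions with both adjoints; by Setup~\ref{setup:nc-geometry}(ii) the gluing and restriction functors are exact and preserve perfect (compact, dualizable) objects, so the ambient recollement restricts to an exact sequence $\mathcal{A}\hookrightarrow\mathcal{B}\twoheadrightarrow\mathcal{C}$ of \emph{perfect} stable $\infty$--categories, each of which is small and idempotent--complete. Transporting Waldhausen's fibration/localization theorems (imported in \S\ref{subsec:standing-assumptions}) across the model comparison of Theorem~\ref{thm:model-comparison}, one obtains a cofiber sequence of spectra
\[
K_\Gamma(\mathcal{A})\ \longrightarrow\ K_\Gamma(\mathcal{B})\ \xrightarrow{\ K(j^{*})\ }\ K_\Gamma(\mathcal{C}).
\]

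Next I would exploit the recollement adjunctions. The Verdier quotient $j^{*}\colon\mathcal{B}\to\mathcal{C}$ admits a fully faithful exact right adjoint $j_{*}$ with $j^{*}j_{*}\simeq\id_{\mathcal{C}}$; being an exact functor of stable $\infty$--categories, $j_{*}$ induces $K_\Gamma(j_{*})\colon K_\Gamma(\mathcal{C})\to K_\Gamma(\mathcal{B})$, which is a section of $K_\Gamma(j^{*})$ (dually, $K_\Gamma(i^{!})$ retracts $K_\Gamma(i_{*})$). A cofiber sequence in $\mathbf{Sp}$ whose cofiber map admits a section splits, and finite coproducts agree with finite products in $\mathbf{Sp}$; hence
\[
K_\Gamma(\mathcal{B})\ \simeq\ K_\Gamma(\mathcal{A})\ \vee\ K_\Gamma(\mathcal{C}).
\]
One may equally read this as an instance of Waldhausen additivity applied to the semiorthogonal decomposition $\mathcal{B}=\langle\mathcal{A},\mathcal{C}\rangle$ furnished by the recollement. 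The main obstacle is the first step rather than the last: one must check carefully that the recollement of $\QCoh$--subcategories genuinely restricts to an \emph{exact} sequence of perfect subcategories — i.e.\ that $\mathcal{A}$ is precisely the kernel of $j^{*}$ on compact objects, that the $\langle\mathcal{A},\mathcal{C}\rangle$ pieces are generated by compacts of $\mathcal{B}$, and that $\mathcal{C}$ is the idempotent completion of the Verdier quotient — so that the imported localization theorem applies verbatim. Once that reduction is secured, the splitting and the wedge decomposition are formal.
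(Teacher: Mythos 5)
Your proposal is sound, and you should know that the paper itself records this Additivity theorem with no proof whatsoever (not even an ``Idea'' sketch), so your argument supplies exactly what is missing; it is also clearly the intended argument, since the additivity and fibration inputs imported in \S\ref{subsec:standing-assumptions} and the model comparison of Theorem~\ref{thm:model-comparison} are precisely what you invoke. Two refinements are worth making. First, in the reduction from $\QCoh$ to $\Perf$: in a recollement of the big quasi--coherent categories the \emph{right} adjoint $j_*$ of the quotient functor need not preserve compact objects, so the section of $K_\Gamma(j^{*})$ at the level of perfect objects is more safely induced by the left adjoint $j_!$ (which preserves compacts because $j^{*}$ preserves all colimits), or else one leans on Setup~\ref{setup:nc-geometry}(ii) as you do; your closing caveats --- that $\mathcal{A}$ is the kernel of $j^{*}$ on compacts, that $\mathcal{B}$ is compactly generated by the two pieces, and that $\mathcal{C}$ is the idempotent completion of the Verdier quotient --- are exactly the conditions that must be verified or assumed, and in the present axiomatic framework they can only be assumed. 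Second, since the recollement yields a semiorthogonal decomposition $\mathcal{B}=\langle\mathcal{A},\mathcal{C}\rangle$ on perfect objects, the cleaner route is the one you mention only in passing: apply additivity directly to get $K_\Gamma(\mathcal{B})\simeq K_\Gamma(\mathcal{A})\times K_\Gamma(\mathcal{C})\simeq K_\Gamma(\mathcal{A})\vee K_\Gamma(\mathcal{C})$, thereby bypassing the connective--$K$-theory subtleties (possible failure of $\pi_0$--surjectivity for a general Verdier quotient) that your localization--then--split route must argue away; in the split case the existence of the section renders those subtleties harmless, so both routes do succeed.
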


\begin{theorem}[Localization]
\label{thm:fibration}
For a closed immersion $Z\hookrightarrow X$ with open complement
$U=X\setminus Z$ there is a fiber sequence of spectra
\[
K_\Gamma(Z)\ \longrightarrow\ K_\Gamma(X)\ \longrightarrow\ K_\Gamma(U),
\]
compatible with the long exact sequence on $\pi_n$ from
\S\ref{subsec:excision-cdh}.
\end{theorem}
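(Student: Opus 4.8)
The plan is to obtain the fiber sequence from the $K$-theoretic localization theorem applied to a Verdier sequence of $\Gamma$-perfect complexes, together with a devissage identification of the subcategory supported on $Z$, and then to read off compatibility with the long exact sequence of \S\ref{subsec:excision-cdh} directly.

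\textbf{Step 1: the localization sequence of stable $\infty$-categories.} Let $i:Z\hookrightarrow X$ be the closed immersion cut out by a saturated two-sided $\Gamma$-ideal $\mathfrak{a}$ and $j:U=X\setminus Z\hookrightarrow X$ the open complement. Using Setup~\ref{setup:nc-geometry} (restriction along $\Gamma$-Zariski morphisms is exact and preserves perfect objects), set
\[
\Perf_{\Gamma,Z}(X):=\ker\!\bigl(j^{*}:\Perf_\Gamma(X)\to\Perf_\Gamma(U)\bigr),
\]
the full stable idempotent-complete subcategory of $\Gamma$-perfect complexes acyclic over $U$. I would first show that $\Perf_{\Gamma,Z}(X)\to\Perf_\Gamma(X)\xrightarrow{\,j^{*}\,}\Perf_\Gamma(U)$ is an exact sequence of stable $\infty$-categories, i.e.\ that $j^{*}$ exhibits $\Perf_\Gamma(U)$ as the idempotent completion of the Verdier quotient $\Perf_\Gamma(X)/\Perf_{\Gamma,Z}(X)$: fully faithfulness on the quotient is the universal property of Verdier localization \cite{Neeman2001} combined with the kernel description, while density up to retracts is a Thomason--Trobaugh extension argument---glue a perfect complex on $U$ to one on $X$ after adding a summand, using quasi-compactness, the affine basis $D(a,\boldsymbol{\gamma})$ of Setup~\ref{setup:nc-geometry}(iii), and the hyperdescent of Theorem~\ref{thm:perf-descent} / Proposition~\ref{prop:imported-descent-perf}. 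Applying the localization theorem for the Waldhausen/Barwick $K$-theory of stable $\infty$-categories \cite{Barwick2016} (equivalently, quoting Theorem~\ref{thm:excision} with the model comparison Theorem~\ref{thm:model-comparison}, since $\Perf_{\Gamma,Z}(X)$ is precisely the Waldhausen subcategory $\Perf_Z(X)$ there) yields a fiber sequence
\[
K\!\bigl(\Perf_{\Gamma,Z}(X)\bigr)\longrightarrow K_\Gamma(X)\longrightarrow K_\Gamma(U).
\]

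\textbf{Step 2 (the crux): devissage $K(\Perf_{\Gamma,Z}(X))\simeq K_\Gamma(Z)$.} This is the step I expect to be the main obstacle. Under the regularity hypothesis in force throughout \S\ref{subsec:excision-cdh}, I would show that pushforward $i_{*}:\Perf_\Gamma(Z)\to\Perf_{\Gamma,Z}(X)$ induces an equivalence on $K$-theory by devissage along the $\mathfrak{a}$-adic filtration: a $\Gamma$-perfect complex supported on $Z$ carries, inside the exact category ${\nTGMod{T}}^{\mathrm{bi}}$, an admissible filtration whose successive subquotients are annihilated by $\mathfrak{a}$---hence lie in the essential image of $i_{*}$---and Waldhausen additivity then identifies the two spectra. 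The difficulty is the absence of an ambient abelian category: Quillen's classical devissage does not apply verbatim, so one must verify that each filtration step is an \emph{admissible} monomorphism with subquotient pushed forward from $Z$, and invoke the non-commutative Wedderburn--Artin input (Proposition~\ref{prop:imported-WA}) together with regularity to ensure the filtration terminates and that the strata break into the matrix blocks controlled there; derived Morita invariance (Theorem~\ref{thm:morita-K}) is what transports those blocks back to $K_\Gamma(Z)$.

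\textbf{Step 3: assembly and compatibility.} Splicing Steps~1 and~2 gives the desired fiber sequence $K_\Gamma(Z)\to K_\Gamma(X)\to K_\Gamma(U)$. Compatibility with \S\ref{subsec:excision-cdh} is then tautological: under the identifications $\Perf_{\Gamma,Z}(X)=\Perf_Z(X)$ and $K(\Perf_{\Gamma,Z}(X))\simeq K_\Gamma(Z)$, this fiber sequence maps by equivalences to the homotopy fibration of Theorem~\ref{thm:excision}, so the two long exact sequences on $\pi_{n}$ agree term by term, including the connecting maps.
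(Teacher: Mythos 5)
Your Step~1 is essentially the paper's own route: the paper offers no independent argument for Theorem~\ref{thm:fibration} beyond pointing back to Theorem~\ref{thm:excision}, whose ``Idea'' is exactly what you write --- take $\Perf_Z(X)\subset\Perf(X)$, apply Waldhausen's fibration theorem and the localization to $\Perf(U)$, so that the fiber is $\Kspec(\Perf_Z(X))$. Where your proposal goes beyond the paper is precisely where the genuine gap sits, and your Step~2 does not close it. The statement of Theorem~\ref{thm:fibration} asserts that the fiber is $K_\Gamma(Z)$, not $K(\Perf_{\Gamma,Z}(X))$, and bridging these requires two things you do not actually have: (i) that $i_*$ carries $\Perf_\Gamma(Z)$ into $\Perf_{\Gamma,Z}(X)$ at all, which already needs a finite-Tor-dimension (regular immersion) hypothesis on $i$ that appears neither in the theorem statement nor anywhere in \S\ref{subsec:excision-cdh} (the word ``regularity'' occurs only in that subsection's title, attached to the cdh/fpqc discussion, and Theorem~\ref{thm:excision} itself is stated without it --- so the ``regularity hypothesis in force'' you invoke is not in force); and (ii) a devissage theorem for the $\mathfrak{a}$-adic filtration in the non-abelian exact category ${\nTGMod{T}}^{\mathrm{bi}}$. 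You flag (ii) as the obstacle, but the tools you propose do not address it: Proposition~\ref{prop:imported-WA} concerns a finite semisimple block decomposition of $T$ itself and yields a product decomposition of $\Kspec$, which says nothing about filtering a complex supported on $Z$ by powers of $\mathfrak{a}$ or about the strata being pushed forward from $Z$; and Theorem~\ref{thm:morita-K} requires a tilting bimodule complex relating the strata to $Z$, which you never construct. Classically this identification is exactly the point where $K$-theory and $G$-theory diverge: without regularity one only gets the supported category as the fiber (as in Thomason--Trobaugh), and with regularity one needs an actual devissage/resolution argument, not available off the shelf in the semiring setting.

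So the honest conclusion is: your Step~1 plus Step~3 prove the fibration with fiber $K(\Perf_{\Gamma,Z}(X))$, which is Theorem~\ref{thm:excision} restated, and the compatibility claim is then indeed tautological; but the identification $K(\Perf_{\Gamma,Z}(X))\simeq K_\Gamma(Z)$ remains unproved both in your sketch and in the paper. To make Step~2 rigorous you would need to either (a) add an explicit hypothesis that $i$ is a regular closed immersion (or that $\Ocal_Z$ has finite Tor-dimension over $\Ocal_X$) and prove a positional-$\Gamma$ analogue of devissage/resolution for the exact structure of \S\ref{subsec:exactness}, verifying admissibility of each filtration step, or (b) weaken the statement so that the fiber is $K(\Perf_{\Gamma,Z}(X))$, matching Theorem~\ref{thm:excision}.
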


\begin{remark}[Homotopy invariance]
If $X\times\mathbb{A}^1_\Gamma\to X$ is homotopy initial in the
$\Gamma$--site (\cite{PaperG}), then
$K_\Gamma(X\times\mathbb{A}^1_\Gamma)\simeq K_\Gamma(X)$.
\end{remark}

\subsection{Base--change, projection, and descent}

\begin{theorem}[Base--change]
For a flat morphism
$f:\SpecGnC{T'}\!\to\!\SpecGnC{T}$,
the pullback
$Lf^*:\Perf_\Gamma(\SpecGnC{T})\!\to\!\Perf_\Gamma(\SpecGnC{T'})$
induces an equivalence
\[
K_\Gamma(\SpecGnC{T})
   \otimes_{\pi_0}\pi_0\Gamma'
   \xrightarrow{\;\sim\;}
   K_\Gamma(\SpecGnC{T'}),
\]
and hence isomorphisms on all $\pi_n$ after scalar extension.
\end{theorem}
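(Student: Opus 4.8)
The plan is to deduce this from the stable-$\infty$-categorical machinery of this section together with the $\Ga$-flat base change already established for the bar model. First I would check that the derived pullback $Lf^\ast$ is well-behaved on $\Ga$-perfect complexes: since $f$ is $\Ga$-flat in the $(j,k)$-profile, Proposition~\ref{prop:imported-bar} identifies $\mathbf{L}f_!$ with the underived positional tensor $T'\otimes^{(j,k)}_{\Ga}(-)$ on perfect objects, so $Lf^\ast$ carries a bounded complex of finitely generated projective bi-$\Ga$-modules to one of the same shape, preserves fiber/cofiber sequences (right-exactness of the positional tensor, Proposition~\ref{prop:imported-positional-tensor}, together with vanishing of higher positional Tor under flatness), and hence restricts to an exact functor of small stable idempotent-complete $\infty$-categories. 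By Theorem~\ref{thm:model-comparison} this agrees with the $Lf^\ast$ appearing in the statement.

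Next I would upgrade $Lf^\ast$ to a strong symmetric monoidal functor for the positional tensor: working levelwise on the two-sided bar complex of \S\ref{subsec:bar}, the balance relations of \S\ref{subsec:positional-tensor} identify $T'\otimes^{(j,k)}_{\Ga}(M\otimes^{(j,k)}_{\Ga}N)$ with $(T'\otimes^{(j,k)}_{\Ga}M)\otimes^{(j,k)}_{\Ga'}(T'\otimes^{(j,k)}_{\Ga}N)$ and send the unit to the unit. The crucial structural step, and the one I expect to be the main obstacle, is the identification of stable $\infty$-categories
\[
\Perf_\Gamma(\SpecGnC{T'})\ \simeq\ \Perf_\Gamma(\SpecGnC{T})\,\otimes_{\Perf(\pi_0\Ga)}\,\Perf(\pi_0\Ga'),
\]
a Lurie relative tensor product over the symmetric monoidal $\infty$-category of $\pi_0\Ga$-perfect complexes. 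I would obtain this exactly as in the proof of Theorem~\ref{thm:fpqc}: $\Ga$-flatness makes the extension-of-scalars comonad on $\Perf_\Gamma(\SpecGnC{T'})$ satisfy the Barr--Beck hypotheses, the bar resolution of the identity exhibits $\Perf_\Gamma(\SpecGnC{T'})$ as the relative tensor product, and idempotent-completeness of the latter is absorbed into $\Perf_\Gamma$ by Proposition~\ref{prop:imported-descent-perf}. The delicate points are comonadicity on perfect objects (preservation of the relevant totalizations, which uses boundedness of perfect complexes together with slotwise $\Ga$-flatness to control the bar tower) and checking that the relative tensor product is already idempotent-complete rather than merely a retract of such.

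Granting the categorical base-change identity, the final step is formal. $K$-theory is a localizing invariant, hence by its corepresentability in $\Ga$-linear non-commutative motives (recorded in the Introduction) it carries the relative tensor product above to the corresponding relative smash of spectra,
\[
K_\Gamma(\SpecGnC{T'})\ \simeq\ K_\Gamma(\SpecGnC{T})\,\wedge_{K(\Perf(\pi_0\Ga))}\,K(\Perf(\pi_0\Ga')).
\]
Since $\pi_0 K(\Perf(\pi_0\Ga))$ is the Grothendieck group attached to $\pi_0\Ga$ and the base change is flat at the level of $\pi_0$, the associated base-change spectral sequence degenerates to give $\pi_n K_\Gamma(\SpecGnC{T})\otimes_{\pi_0\Ga}\pi_0\Ga'\iso\pi_n K_\Gamma(\SpecGnC{T'})$ for every $n$, which is the asserted equivalence after scalar extension. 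Compatibility with the projection formula (Theorem~\ref{thm:projection-formula}) and with Zariski/fpqc descent (Theorems~\ref{thm:zariski-descent} and~\ref{thm:fpqc}) is then automatic, since the functors involved are precisely those occurring in those statements.
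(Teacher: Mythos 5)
The paper records this base--change statement in Section~\ref{sec:geometry} without any proof, so there is no argument of the author's to compare yours against; judged on its own, your proposal has a genuine gap at its central step. The decisive move --- passing from the proposed categorical identity $\Perf_\Gamma(\SpecGnC{T'})\simeq\Perf_\Gamma(\SpecGnC{T})\otimes_{\Perf(\pi_0\Ga)}\Perf(\pi_0\Ga')$ to the spectrum-level identification $K_\Gamma(\SpecGnC{T'})\simeq K_\Gamma(\SpecGnC{T})\wedge_{K(\Perf(\pi_0\Ga))}K(\Perf(\pi_0\Ga'))$ --- is not something localizing invariance or corepresentability in $\Mot_\Gamma$ can deliver. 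Algebraic $K$-theory is only \emph{lax} monoidal with respect to tensor products of stable $\infty$-categories: there is a canonical comparison map $K(\mathcal{A})\wedge_{K(\mathcal{B})}K(\mathcal{C})\to K(\mathcal{A}\otimes_{\mathcal{B}}\mathcal{C})$, but it fails to be an equivalence already in classical examples (e.g.\ $\Perf(\mathbb{F}_p)\otimes_{\Perf(\mathbb{Z})}\Perf(\mathbb{F}_p)$), and corepresentability of $K$ by the unit motive says nothing about $\mathcal{U}$ or $K$ preserving relative tensor products. So even granting your categorical base-change identity, the ``formal'' final step does not go through.

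Two further steps are also unsupported. First, degeneration of the K\"unneth/Tor spectral sequence for the relative smash product would require flatness of the relevant \emph{homotopy groups of $K$-theory spectra} over $\pi_*K(\Perf(\pi_0\Ga))$; $\Ga$-flatness of $T\to T'$ gives no control over $\pi_*\Kspec(-)$, so ``flat at the level of $\pi_0$'' is not a consequence of the hypotheses and would not in any case kill all higher $\operatorname{Tor}$ terms. Second, the categorical identity you flag as the main obstacle is indeed not available from the paper's inputs: Proposition~\ref{prop:imported-descent-perf} and Theorem~\ref{thm:fpqc} concern descent along (fpqc/Zariski) \emph{covers} and their \v{C}ech nerves, whereas here $f$ is merely flat, not faithfully flat, so the Barr--Beck comonadicity argument you invoke has no reason to apply; Lemma~\ref{lem:exact-waldhausen} and Theorem~\ref{thm:base-change} only give the induced map $\Kspec(f_!)$ and commuting squares, not a description of $\Perf_\Gamma(\SpecGnC{T'})$ as a relative tensor product. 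Finally, note that the displayed operation $\otimes_{\pi_0}\pi_0\Ga'$ is never given a precise meaning in the paper, so your reading of it is an interpretation rather than a deduction; as stated, the theorem (and hence any proof of it) would first need that meaning fixed, and in the most natural readings the assertion is stronger than anything classical flat base change provides for $K$-theory.
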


\begin{theorem}[Projection formula]
For a proper morphism $f:X\to Y$ in the non--commutative
$\Gamma$--geometry of \cite{PaperG},
\[
Rf_*(\Fcal\!\otimes^L\!\Gcal)
   \simeq
   Rf_*\Fcal\!\otimes^L\!\Gcal,
\]
yielding a functorial map on $K$--theory spectra compatible with
localization fiber sequences.
\end{theorem}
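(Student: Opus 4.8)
The plan is to prove the statement first at the level of the stable $\infty$-categories $\Perf_\Gamma$ and then transport it to $K$-theory spectra by functoriality. First I would reduce to the affine local case: by $\Gamma$-Zariski (and fpqc) hyperdescent for $\mathsf{Perf}_\Gamma$ (Theorem~\ref{thm:perf-descent}), both sides of the asserted equivalence are computed as limits over the \v{C}ech nerve of an affine cover of $Y$ refined so that $f$ becomes affine, and $Rf_*$, $Lf^*$, $\otimes^L$ all commute with the restriction functors along $\Gamma$-Zariski morphisms (Setup~\ref{setup:nc-geometry}(ii)). It therefore suffices to treat $f:X=\SpecGnC{T'}\to Y=\SpecGnC{T}$ between affines, where $Lf^*$ is modelled by the two-sided positional bar construction $B_\bullet(T',T,-)$ of \S\ref{subsec:bar} and, under the flatness hypothesis subsumed in ``proper'' here, agrees on perfect objects with $f_!=T'\otimes^{(j,k)}_{\Gamma}(-)$ (Proposition~\ref{prop:imported-bar}).

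Next I would construct the comparison morphism and identify when it is invertible. The pushforward $Rf_*:\Perf_\Gamma(X)\to\Perf_\Gamma(Y)$ is right adjoint to $Lf^*$, and $Lf^*$ is symmetric monoidal for the positional tensor (derived pullback always is), so $Rf_*$ inherits a canonical lax monoidal structure and is in particular a module functor over $(\Perf_\Gamma(Y),\otimes^L)$. Combining this lax structure with the counit $Lf^*Rf_*\Fcal\to\Fcal$ yields a natural projection map
\[
\pi_{\Fcal,\Gcal}:\ Rf_*(\Fcal\otimes^L Lf^*\Gcal)\ \longrightarrow\ Rf_*\Fcal\otimes^L\Gcal .
\]
Both sides preserve colimits in the variable $\Gcal$ (here $Lf^*$ and $Rf_*$ preserve colimits and $\otimes^L$ is exact in each slot), so $\pi_{\Fcal,\Gcal}$ is an equivalence for all $\Gcal\in\Perf_\Gamma(Y)$ as soon as it is one for $\Gcal=\Ocal_Y$, where it is the identity; since the thick subcategory of $\Perf_\Gamma(Y)$ generated by $\Ocal_Y$ is all of $\Perf_\Gamma(Y)$ (that is how $\Perf_\Gamma$ was defined) and $\pi$ is compatible with shifts, cofibers, and retracts, the conclusion propagates. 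Equivalently, the case of dualizable $\Gcal$ reduces to the Beck--Chevalley square of Theorem~\ref{thm:base-change} applied along the structural projection.

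Finally I would pass to $K$-theory. Applying the $\infty$-categorical $K$-theory functor --- a localizing, lax symmetric monoidal invariant, in the sense recorded around Theorem~\ref{thm:model-comparison} --- to the equivalence $\pi$ gives the asserted functorial map on $K$-theory spectra. Compatibility with the localization fiber sequence of Theorem~\ref{thm:fibration} is then formal: for a closed $Z\hookrightarrow Y$ with complement $U$, the functors $Rf_*$, $Lf^*$, $\otimes^L$ carry the support-defined subcategories $\Perf_{\Gamma,Z}$ into one another, so $\pi$ is a morphism of (co)fiber sequences of stable $\infty$-categories; since $K(-)$ sends such sequences to fiber sequences of spectra, the induced map on $K$-theory respects the localization sequences on both $X$ and $Y$.

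The main obstacle is the hidden content of the word ``proper'': one must know that $Rf_*$ preserves perfectness, i.e.\ that $f$ is a \emph{perfect proper} morphism in the $\Gamma$-geometry of \cite{PaperG} (a finite-Tor-amplitude/coherence condition, the $n$-ary $\Gamma$-analogue of properness together with finite Tor-dimension), and that the Beck--Chevalley square invoked in the second step is available along the relevant projection. Granting the proper-pushforward preservation theorem of \cite{PaperG}, the remainder is the formal monoidality/Beck--Chevalley argument already deployed for Theorem~\ref{thm:projection-formula}; without it, $Rf_*\Fcal$ need not lie in $\Perf_\Gamma(Y)$ and the formula would have to be phrased in $\QCoh$ instead.
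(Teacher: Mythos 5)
The paper never actually proves this statement: the Section~\ref{sec:geometry} projection formula is recorded without proof, and the nearest thing to an argument is the one-line ``Beck--Chevalley/monoidality'' sketch offered for the algebraic analogue, Theorem~\ref{thm:projection-formula}. Your proposal is essentially that same strategy, carried out in more detail and at the $\infty$-categorical level: reduce to affines by descent (Theorem~\ref{thm:perf-descent}), build the projection map $Rf_*(\Fcal\otimes^L Lf^*\Gcal)\to Rf_*\Fcal\otimes^L\Gcal$ from the lax monoidal structure on $Rf_*$, propagate the equivalence from the unit (or from dualizable objects, via Theorem~\ref{thm:base-change}), and then apply $K(-)$ and check compatibility with Theorem~\ref{thm:fibration}. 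What your write-up adds beyond the paper is genuinely valuable: you silently repair the statement itself (the paper's display omits $Lf^*$ on $\Gcal$, without which the formula is not even well formed, since $\Gcal$ lives on $Y$), and you isolate the real mathematical content hidden in the word ``proper,'' namely that $Rf_*$ must preserve perfectness --- a hypothesis the paper neither defines nor verifies, so your conditional phrasing is the honest one. Two small corrections to your sketch: the paper defines $\Perf_\Gamma(Y)$ as generated by compact \emph{dualizable} objects, not by the thick closure of $\Ocal_Y$, so the generation step should be run via your dualizable-object alternative rather than the ``generated by $\Ocal_Y$, that is how it was defined'' claim; and for that thick-subcategory propagation you only need $Rf_*$ to be exact (preserve finite colimits and retracts), not to preserve all colimits, which is fortunate because colimit-preservation of $Rf_*$ is not available without further hypotheses. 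Neither point affects the viability of your argument, which is at least as complete as anything in the paper.
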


\begin{theorem}[Descent for $K_\Gamma$]
$K_\Gamma(-)$ is a sheaf for the $\Gamma$--Zariski topology and satisfies
hyperdescent for fpqc covers:
for any cover $\{U_i\to X\}$,
\[
K_\Gamma(X)
   \longrightarrow
   \operatorname*{holim}\!
   \Big(
     \prod\nolimits_i K_\Gamma(U_i)
     \rightrightarrows
     \prod\nolimits_{i,j}K_\Gamma(U_i\times_XU_j)
     \triplearrows\cdots
   \Big)
\]
is an equivalence.
\end{theorem}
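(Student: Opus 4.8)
The claim splits into two assertions --- $\Ga$--Zariski hyperdescent and fpqc hyperdescent --- and by the model comparison \Cref{thm:model-comparison} we may identify $K_\Gamma(-)$ with the Waldhausen spectrum $\Kspec\big(\Perf(-)\big)$ throughout. With this identification the \v{C}ech--level (i.e.\ $1$--truncated cosimplicial) form of both assertions is already available: \Cref{thm:zariski-descent} for finite $\Ga$--Zariski covers and \Cref{thm:fpqc} for fpqc covers induced by $\Ga$--flat morphisms. So the content of the present theorem is to promote these to (a) statements about the full cosimplicial totalization and (b) descent for arbitrary hypercovers, not merely \v{C}ech nerves. The plan is therefore: first reduce to a statement purely about the $\infty$--categorical $K$--theory functor applied to the sheaf of stable $\infty$--categories $\mathsf{Perf}_\Gamma$; then establish strong convergence of the resulting descent tower using finite cohomological dimension of the $\Ga$--Zariski site; and finally feed the fpqc case through comonadic (Barr--Beck--Lurie) descent.

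For the $\Ga$--Zariski part I would start from the fact that $\mathsf{Perf}_\Gamma$ is a hypersheaf of small stable idempotent--complete $\infty$--categories (\Cref{thm:perf-descent}, resting on \Cref{prop:imported-descent-perf}), so for any hypercover $U_\bullet\to X$ one has $\Perf_\Gamma(X)\simeq\lim_{\Delta}\Perf_\Gamma(U_\bullet)$. Applying $K\colon\mathbf{Cat}_\infty^{\mathrm{st}}\to\mathbf{Sp}$ produces the comparison map $K_\Gamma(X)\to\holim_\Delta K_\Gamma(U_\bullet)$, and the task is to show it is an equivalence. I would argue in two steps. First, by the Additivity theorem of \S\ref{sec:geometry} together with the localization and excision fiber sequences of \S\ref{subsec:excision-cdh} (\Cref{thm:excision}, \Cref{thm:fibration}), the functor $K_\Gamma(-)$ is finitary, sends finite disjoint unions to products, and carries binary $\Ga$--Zariski covers to homotopy cartesian (Mayer--Vietoris) squares; by the standard cd--structure criterion this already makes $K_\Gamma(-)$ a $\Ga$--Zariski \emph{sheaf} on affines, and hence, after left Kan extension, on the whole $\Ga$--Zariski site. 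Second, since the $\Ga$--Zariski site of a quasi--compact $\SpecGnC{T}$ has an affine basis closed under finite intersections (\Cref{setup:nc-geometry}) and therefore finite cohomological/homotopy dimension, the Bousfield--Kan descent spectral sequence of the cosimplicial spectrum $K_\Gamma(U_\bullet)$ has a horizontal vanishing line and converges strongly; this simultaneously identifies the \v{C}ech homotopy limit with $K_\Gamma(X)$ and forces the hypercomplete refinement (the standard ``finite cohomological dimension $\Rightarrow$ hyperdescent'' mechanism).

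For the fpqc part, let $p\colon\SpecGnC{T'}\to\SpecGnC{T}$ be the cover attached to a $\Ga$--flat morphism $f\colon(T,\Ga)\to(T',\Ga')$. By \Cref{lem:exact-waldhausen} the pullback $Lf^{*}$ (extension of scalars along $T\to T'$) is an exact functor on perfect objects, and faithful flatness makes it conservative, hence comonadic; combined with \Cref{thm:fpqc} (equivalently \Cref{prop:imported-descent-perf} applied to the \v{C}ech nerve) this gives $\Perf_\Gamma(X)\simeq\lim_\Delta\Perf_\Gamma\big(\check{C}(p)_\bullet\big)$. Every coface in $\check{C}(p)_\bullet$ is again a base change of $p$, hence again induced by a $\Ga$--flat morphism by stability of $\Ga$--flatness under the cartesian squares of \S\ref{subsec:proj-basechange}, so $\Kspec(-)$ is functorial on the entire cosimplicial diagram with structure maps the base--change maps of \Cref{def:K-base-change}. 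Applying $K$ and invoking the same strong--convergence argument as above (the local direction does not raise cohomological dimension in the cases under consideration) yields $K_\Gamma(X)\xrightarrow{\ \simeq\ }\holim_\Delta K_\Gamma\big(\check{C}(p)_\bullet\big)$, and the passage to general fpqc hypercovers follows as before.

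The hard part is precisely the interchange of $K$--theory with the totalization: $K$ does not preserve limits of stable $\infty$--categories in general, so descent for $\mathsf{Perf}_\Gamma$ does not \emph{formally} transport to descent for $K_\Gamma$. I would neutralize this in one of two ways. Either (a) pass to the nonconnective (Bass--delooped) $K$--theory, for which the localization fiber sequences of \S\ref{subsec:excision-cdh} are exact without a connectivity defect, so that a Thomason--Trobaugh/Brown--Gersten argument on the $\Ga$--Zariski cd--structure applies directly; or (b) keep the connective theory but restrict, as the standing hypotheses permit, to quasi--compact quasi--separated $\SpecGnC{T}$ with a finite affine basis, where the vanishing line in the descent spectral sequence gives the equivalence on each $\pi_n$. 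A secondary point that must be checked carefully is that all coface and codegeneracy maps occurring in the relevant \v{C}ech nerves genuinely satisfy the positional $\Ga$--flatness hypothesis of \Cref{lem:exact-waldhausen}, so that $\Kspec(-)$ and its functoriality are defined on the whole cosimplicial object; this is exactly where stability of $\Ga$--flatness under base change (\S\ref{subsec:proj-basechange}) is invoked.
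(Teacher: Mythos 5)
The paper does not actually give an argument for this statement: it is presented as the $\infty$--categorical repackaging of Theorem~\ref{thm:zariski-descent} and Theorem~\ref{thm:fpqc} via Theorem~\ref{thm:model-comparison} and Theorem~\ref{thm:perf-descent} (ultimately Proposition~\ref{prop:imported-descent-perf}), all of which are themselves only ``Idea'' level. Your opening reduction --- identify $K_\Gamma(-)$ with $\Kspec(\Perf(-))$ and feed in the \v{C}ech--level statements of \S\ref{subsec:sheafified-comparison} and \S\ref{subsec:excision-cdh} --- is exactly the route the paper intends, and you go further than the paper in two useful ways: you isolate the real difficulty (the $K$--theory functor does not commute with the totalization, so descent for $\mathsf{Perf}_\Gamma$ does not formally yield descent for $K_\Gamma$), and for the $\Gamma$--Zariski half you supply the standard repair (cd--structure/Mayer--Vietoris plus a Bousfield--Kan spectral sequence with a vanishing line, or nonconnective $K$ à la Thomason--Trobaugh). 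That half of your proposal is sound, modulo the unstated finiteness hypothesis on the $\Gamma$--Zariski site, which the paper's Setup~\ref{setup:nc-geometry} does not actually guarantee but which is a reasonable reading of ``quasi--compact with an affine basis''.

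The genuine gap is in the fpqc half. The strong--convergence mechanism you invoke does not transfer: the \v{C}ech nerve of an fpqc cover has no cohomological--dimension bound, so there is no vanishing line in the descent spectral sequence, and your parenthetical ``the local direction does not raise cohomological dimension in the cases under consideration'' is precisely the assertion that needs proof (classically it is false: neither connective nor nonconnective $K$--theory satisfies fpqc or even \'etale hyperdescent). Comonadicity of $Lf^{*}$ gives you $\Perf_\Gamma(X)\simeq\lim_\Delta\Perf_\Gamma(\check{C}(p)_\bullet)$, but that is descent at the level of stable $\infty$--categories, and the interchange of $K$ with this totalization --- the very ``hard part'' you name --- is not closed by either of your proposed fixes (a) or (b), both of which are Zariski/Nisnevich--type arguments. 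Within the paper's own axiomatics the only available move is to quote Theorem~\ref{thm:fpqc} (equivalently Proposition~\ref{prop:imported-descent-perf} plus the asserted levelwise $S_\bullet$--realization) as an imported input for the \v{C}ech form, and to note that the promotion from \v{C}ech descent to genuine fpqc \emph{hyper}descent is an additional claim that neither your argument nor the paper establishes; as written, your proof of that clause would fail.
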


\subsection{Non--commutative motives and the universal property of $K$}

\begin{definition}[Localizing invariants]
Let $\mathbf{Cat}^{\perf}_\Gamma$ be the $\infty$--category of small
idempotent--complete stable $\Gamma$--linear $\infty$--categories.
A functor $E:\mathbf{Cat}^{\perf}_\Gamma\to\mathbf{Sp}$ is
\emph{localizing} if it sends exact sequences to fiber sequences and
preserves filtered colimits.\cite{Blumberg2013}
\end{definition}

\begin{theorem}[Universal property] \cite{Cisinski2011}
There exists a universal localizing invariant
\[
\mathcal{U}:\mathbf{Cat}^{\perf}_\Gamma\longrightarrow\Mot_\Gamma
\]
such that for any stable presentable $\infty$--category $\mathcal{T}$,
\[
\Fun^{\mathrm{L}}(\Mot_\Gamma,\mathcal{T})
   \simeq
   \{E:\mathbf{Cat}^{\perf}_\Gamma\to\mathcal{T}\text{ localizing}\}.
\]
Moreover, algebraic $K$--theory is corepresented by the unit motive \cite{Tabuada2015}:
\[
K_\Gamma(\mathcal{C})
   \simeq
   \Map_{\Mot_\Gamma}
     \big(\mathcal{U}(\mathbf{1}_\Gamma),\mathcal{U}(\mathcal{C})\big),
   \qquad
   \mathbf{1}_\Gamma:=\Perf_\Gamma(\SpecGnC{\Gamma}).
\]
\end{theorem}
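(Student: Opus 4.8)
The plan is to run the Blumberg--Gepner--Tabuada construction of non--commutative motives, carried out $\Gamma$--linearly over the base unit $\mathbf{1}_\Gamma$, so that $\mathbf{Cat}^{\perf}_\Gamma$ is the $\infty$--category of small, stable, idempotent--complete $\mathbf{1}_\Gamma$--module $\infty$--categories. First I would build $\Mot_\Gamma$ as a Bousfield localization of spectral presheaves: form $\mathcal{P}_{\mathbf{Sp}}(\mathbf{Cat}^{\perf}_\Gamma):=\Fun\big((\mathbf{Cat}^{\perf}_\Gamma)^{\mathrm{op}},\mathbf{Sp}\big)$, and left Bousfield--localize at a small set of morphisms that (i) force preservation of filtered colimits and (ii) send each exact (Verdier) sequence $\mathcal{A}\hookrightarrow\mathcal{B}\twoheadrightarrow\mathcal{C}$ to a cofiber sequence of presheaves; put $\Mot_\Gamma:=L\,\mathcal{P}_{\mathbf{Sp}}(\mathbf{Cat}^{\perf}_\Gamma)$ and let $\mathcal{U}$ be the composite of the spectral Yoneda embedding with $L$. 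Presentability and stability of $\Mot_\Gamma$ are inherited from $\mathcal{P}_{\mathbf{Sp}}$ and preserved by accessible Bousfield localization; $\mathcal{U}$ is localizing by design, and the objects $\mathcal{U}(\mathcal{A})$ generate $\Mot_\Gamma$ under colimits (any $X=LX$ is a colimit of the localized representables mapping to it).

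\textbf{Universal property.} This step is formal. By the defining adjunction of a Bousfield localization, $\Fun^{\mathrm{L}}(\Mot_\Gamma,\mathcal{T})$ is the full subcategory of $\Fun^{\mathrm{L}}(\mathcal{P}_{\mathbf{Sp}}(\mathbf{Cat}^{\perf}_\Gamma),\mathcal{T})$ spanned by functors inverting the chosen morphisms; the universal property of spectral presheaf categories gives $\Fun^{\mathrm{L}}(\mathcal{P}_{\mathbf{Sp}}(\mathbf{Cat}^{\perf}_\Gamma),\mathcal{T})\simeq\Fun(\mathbf{Cat}^{\perf}_\Gamma,\mathcal{T})$, and tracing the two localizing conditions across this equivalence identifies the relevant full subcategory with the localizing functors. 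This yields $\Fun^{\mathrm{L}}(\Mot_\Gamma,\mathcal{T})\simeq\{E\colon\mathbf{Cat}^{\perf}_\Gamma\to\mathcal{T}\ \text{localizing}\}$.

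\textbf{$K_\Gamma$ is localizing and factors through $\mathcal{U}$.} By the Additivity and Localization theorems of this section (Theorem~\ref{thm:fibration}, resting on the imported Waldhausen additivity/fibration inputs of \S\ref{subsec:standing-assumptions}), $K_\Gamma$ sends exact sequences of $\Gamma$--linear stable $\infty$--categories to fiber sequences of spectra, and it preserves filtered colimits since Barwick's $K$--theory functor commutes with filtered colimits of idempotent--complete stable $\infty$--categories. Hence $K_\Gamma$ is a localizing invariant, and the universal property supplies an essentially unique colimit--preserving $\overline{K}_\Gamma\colon\Mot_\Gamma\to\mathbf{Sp}$ with $\overline{K}_\Gamma\circ\mathcal{U}\simeq K_\Gamma$.

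\textbf{Corepresentability.} It remains to identify $\overline{K}_\Gamma$ with $\Map_{\Mot_\Gamma}(\mathcal{U}(\mathbf{1}_\Gamma),-)$. Both are colimit--preserving functors $\Mot_\Gamma\to\mathbf{Sp}$ and the $\mathcal{U}(\mathcal{C})$ generate under colimits, so it suffices to exhibit a natural equivalence $\Map_{\Mot_\Gamma}(\mathcal{U}(\mathbf{1}_\Gamma),\mathcal{U}(\mathcal{C}))\simeq K_\Gamma(\mathcal{C})$. Since $\mathcal{U}(\mathbf{1}_\Gamma)=L\big(\Sigma^\infty_+\Map_{\mathbf{Cat}^{\perf}_\Gamma}(-,\mathbf{1}_\Gamma)\big)$ and $L$ is a left adjoint, evaluation at the unit gives $\Map_{\Mot_\Gamma}(\mathcal{U}(\mathbf{1}_\Gamma),\mathcal{U}(\mathcal{C}))\simeq\mathcal{U}(\mathcal{C})(\mathbf{1}_\Gamma)$, and as a functor of $\mathcal{C}$ this is the localizing replacement of $\mathcal{C}\mapsto\Sigma^\infty_+\Map_{\mathbf{Cat}^{\perf}_\Gamma}(\mathbf{1}_\Gamma,\mathcal{C})$. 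Because $\mathbf{1}_\Gamma$ is the free $\Gamma$--linear small stable idempotent--complete $\infty$--category on one generator, $\Map_{\mathbf{Cat}^{\perf}_\Gamma}(\mathbf{1}_\Gamma,\mathcal{C})\simeq\iota\mathcal{C}$, the core $\infty$--groupoid of $\mathcal{C}$. The identification of the localizing (equivalently: finitary and Verdier--exact) replacement of $\mathcal{C}\mapsto\Sigma^\infty_+\iota\mathcal{C}$ with the $K$--theory spectrum $K_\Gamma(\mathcal{C})$ is precisely the group--completion/additivity theorem together with the Bass--Thomason--Trobaugh--style non--connective delooping, and this is compatible with the definition of $K_\Gamma$ via Barwick's functor (Theorem~\ref{thm:model-comparison}). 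Combining these gives the asserted corepresentability, with $\mathbf{1}_\Gamma=\Perf_\Gamma(\SpecGnC{\Gamma})$ playing the role of the unit motive.

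\textbf{Main obstacle.} The delicate point is the final identification: that the universal localizing invariant, evaluated on the corepresentable presheaf, reproduces exactly the $K_\Gamma$ constructed earlier rather than merely its connective cover. In the purely $\infty$--categorical model this is the Blumberg--Gepner--Tabuada theorem, but in the present semiring/exact--category model one must first confirm that $K_\Gamma(\Perf_\Gamma(-))$ genuinely realizes the non--connective, finitary, Verdier--exact invariant --- i.e.\ that the bounded derived model of Setup~\ref{setup:global-profile} together with the localization and descent results of \S\ref{subsec:excision-cdh} rule out an Eilenberg--swindle obstruction and pin down the correct deloopings. Once that compatibility is secured, the remainder is the formal Blumberg--Gepner--Tabuada argument relativized over $\mathbf{1}_\Gamma$.
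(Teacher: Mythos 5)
The paper offers no proof of this theorem at all --- it is stated as an import from Cisinski--Tabuada and Blumberg--Gepner--Tabuada (BGT) --- so your plan, which reconstructs the BGT construction (spectral presheaves, Bousfield localization at the exactness and filtered-colimit conditions, formal universal property, then corepresentability at the unit), is exactly the route the citations point to; as an outline it is the right one. However, two points in your sketch are genuine gaps rather than glosses. First, the corepresentability step is not the formal Yoneda argument you describe: from $\Map_{\Mot_\Gamma}\big(\mathcal{U}(\mathbf{1}_\Gamma),\mathcal{U}(\mathcal{C})\big)\simeq \big(L\,\Sigma^\infty_+\Map(-,\mathcal{C})\big)(\mathbf{1}_\Gamma)$ you pass to ``the localizing replacement of $\mathcal{C}\mapsto\Sigma^\infty_+\iota\mathcal{C}$'', but the localization $L$ is taken in the presheaf variable, not in $\mathcal{C}$, and it does not commute with evaluation at $\mathbf{1}_\Gamma$ in the way this rewriting suggests. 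Identifying that evaluation with the $K$-theory spectrum is the actual content of BGT's corepresentability theorem (their additivity/cellular-filtration argument for additive motives, then the non-connective upgrade), and your plan should invoke or redo that argument rather than fold it into a one-line replacement statement. Relatedly, localizing-invariant corepresentability yields \emph{non-connective} $K$-theory, whereas the paper's $K_\Gamma$ is Barwick's $K$-theory of $\Perf_\Gamma(X)$; you flag this as the ``main obstacle'', correctly, but nothing in the paper (or in your plan) supplies the Bass--Thomason--Trobaugh delooping in the $\Gamma$-semiring setting, so as stated the corepresented functor and $K_\Gamma$ need not agree beyond connective covers.

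Second, the $\Gamma$-linear infrastructure your argument leans on is nowhere established: you need $\mathbf{Cat}^{\perf}_\Gamma$ to be an accessible $\infty$-category of modules over a symmetric monoidal unit, and you need $\mathbf{1}_\Gamma=\Perf_\Gamma(\SpecGnC{\Gamma})$ to be the free $\Gamma$-linear small stable idempotent-complete $\infty$-category on one generator, so that $\Map_{\mathbf{Cat}^{\perf}_\Gamma}(\mathbf{1}_\Gamma,\mathcal{C})\simeq\iota\mathcal{C}$. In the classical case this is the statement that exact functors out of finite spectra are determined by the image of the sphere; in the positional, semiring-based setting (no additive inverses, tensor $\otimes^{(j,k)}_\Gamma$ depending on a balancing profile, and $\Gamma$ itself not obviously an $n$-ary $\Gamma$-semiring over which $\SpecGnC{\Gamma}$ is defined) this is a substantive claim that neither the paper's Section~\ref{subsec:standing-assumptions} nor your proposal verifies. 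Until those two inputs --- the unit-freeness identification and the connective/non-connective comparison --- are supplied, the plan establishes the existence of $\mathcal{U}$ and the universal property (which are indeed formal), but not the displayed corepresentability of $K_\Gamma$.
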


\begin{corollary}[Morita invariance]
If $\Perf_\Gamma(X)\simeq\Perf_\Gamma(Y)$ as
$\Gamma$--linear stable $\infty$--categories,
then $K_\Gamma(X)\simeq K_\Gamma(Y)$ as spectra.
\end{corollary}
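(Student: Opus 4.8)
The plan is to deduce this from the \emph{functoriality} of $\infty$--categorical $K$--theory; it is the stable $\infty$--categorical counterpart of Theorem~\ref{thm:morita-K}, but now no tilting complex needs to be exhibited since the equivalence of categories is handed to us outright, so the argument is purely formal once the definitions above are unwound.

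First I would record that $\Perf_\Gamma(X)$ and $\Perf_\Gamma(Y)$ are small, idempotent--complete, stable, and $\Gamma$--linear: idempotent--completeness is automatic from the defining property of $\Perf_\Gamma$ (generation under finite limits, finite colimits, and retracts by compact dualizable objects), so both lie in $\mathbf{Cat}^{\perf}_\Gamma$. Next I would invoke the geometric definition $K_\Gamma(X)\simeq K\!\big(\Perf_\Gamma(X)\big)$, and likewise for $Y$, where $K(-)$ is the Waldhausen/Barwick $\infty$--categorical $K$--theory functor $\mathbf{Cat}^{\perf}_\Gamma\to\mathbf{Sp}$. A $\Gamma$--linear equivalence $\Phi\colon\Perf_\Gamma(X)\iso\Perf_\Gamma(Y)$ is in particular an equivalence in $\mathbf{Cat}^{\perf}_\Gamma$, hence an isomorphism in the homotopy category; applying $K(-)$ to $\Phi$ and to a homotopy inverse $\Phi^{-1}$ yields mutually inverse maps $K(\Phi),K(\Phi^{-1})$ of spectra, so $K(\Phi)\colon K_\Gamma(X)\iso K_\Gamma(Y)$ is an equivalence in $\mathbf{Sp}$, and in particular $K^\Gamma_i(X)\cong K^\Gamma_i(Y)$ for all $i\ge 0$. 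By Theorem~\ref{thm:model-comparison} the three models of $K_\Gamma$ agree, so the conclusion is independent of which construction of $K$ one uses.

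As an alternative route, which I would include as a short remark because it makes the motivic compatibility transparent, one runs through the universal localizing invariant: since $\mathcal{U}\colon\mathbf{Cat}^{\perf}_\Gamma\to\Mot_\Gamma$ is a functor it carries $\Phi$ to an equivalence $\mathcal{U}(\Perf_\Gamma(X))\simeq\mathcal{U}(\Perf_\Gamma(Y))$ in $\Mot_\Gamma$, whence the corepresentation formula $K_\Gamma(\mathcal{C})\simeq\Map_{\Mot_\Gamma}\!\big(\mathcal{U}(\mathbf{1}_\Gamma),\mathcal{U}(\mathcal{C})\big)$ produces the desired equivalence of mapping spectra directly.

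There is no genuine obstacle here; the only point deserving a sentence of care is that the $K$--theory used is an invariant of the stable $\infty$--category (equivalently of its canonical Waldhausen structure) \emph{and of nothing else}, so that a $\Gamma$--linear equivalence of the underlying stable $\infty$--categories already suffices, with no need to track extra monoidal or $t$--structure data. This is exactly why idempotent--completeness was built into the definition of $\Perf_\Gamma$: it guarantees that no cofinality or idempotent--completion subtlety (of the kind distinguishing connective from non--connective $K$--theory) can intervene. If one instead wanted the statement for non--connective $K$--theory the same argument applies verbatim, since that functor is likewise defined on $\mathbf{Cat}^{\perf}_\Gamma$ and sends equivalences to equivalences.
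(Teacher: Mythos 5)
Your argument is correct and is essentially the paper's intended one: the corollary is stated without proof immediately after the universal-property theorem, the implicit justification being exactly the formal functoriality of $K(-)$ on $\mathbf{Cat}^{\perf}_\Gamma$ (equivalently, the corepresentability $K_\Gamma(\mathcal{C})\simeq\Map_{\Mot_\Gamma}(\mathcal{U}(\mathbf{1}_\Gamma),\mathcal{U}(\mathcal{C}))$), which is precisely what you invoke. Your remarks on idempotent-completeness and the model comparison are consistent with the paper's setup and add nothing contrary to it.
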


\subsection{Cyclotomic traces and $\Gamma$--topological invariants \cite{Bokstedt1993}}

\begin{theorem}[Chern and cyclotomic traces]
There are natural transformations of spectra
\[
\operatorname{ch}:K_\Gamma(X)\longrightarrow\HC^-_\Gamma(X),
\qquad
\operatorname{trc}:K_\Gamma(X)\longrightarrow\TC_\Gamma(X),
\]
compatible with localization and base--change fiber sequences\cite{Bokstedt1993, Dundas2013}.
\end{theorem}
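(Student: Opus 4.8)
The plan is to exhibit both $\operatorname{ch}$ and $\operatorname{trc}$ as natural transformations of localizing invariants, so that compatibility with the localization and base--change fibre sequences falls out formally from their structural position. First I would construct the $\Gamma$--linear topological Hochschild homology functor
\[
\operatorname{THH}_\Gamma:\ \mathbf{Cat}^{\perf}_\Gamma\longrightarrow\mathbf{Sp},
\]
defined as the trace of the identity endofunctor in the Morita $\infty$--category of $\Gamma$--linear stable idempotent--complete $\infty$--categories (equivalently, the cyclic--bar realization relative to the unit $\mathbf{1}_\Gamma=\Perf_\Gamma(\SpecGnC{\Gamma})$); this is well defined because each $\Perf_\Gamma(X)$ is idempotent--complete and dualizable over $\mathbf{1}_\Gamma$, so the relevant trace exists. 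Running the additivity and localization machinery of \S\ref{subsec:standing-assumptions} and \S\ref{subsec:excision-cdh} — exactly as in the classical Blumberg--Gepner--Tabuada argument — one checks that $\operatorname{THH}_\Gamma$ preserves filtered colimits and sends exact sequences of $\Gamma$--linear stable $\infty$--categories to fibre sequences, i.e.\ is a localizing invariant. Its canonical $S^1$--action then yields $\HC^-_\Gamma:=\operatorname{THH}_\Gamma^{hS^1}$, and equipping $\operatorname{THH}_\Gamma$ with a $\Gamma$--linear cyclotomic structure (the Frobenius/Tate--diagonal datum) produces $\TC_\Gamma$ in the manner of Nikolaus--Scholze; since the homotopy--fixed--point and Tate constructions involved are finite limits over the relevant diagrams on perfect objects, both $\HC^-_\Gamma$ and $\TC_\Gamma$ remain localizing invariants.

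Next I would invoke the universal property of $K_\Gamma$ proved above. Factoring any localizing invariant $E$ through the motivic category as $E=\bar E\circ\mathcal{U}$ and applying co--Yoneda to the corepresentability $K_\Gamma(\mathcal{C})\simeq\Map_{\Mot_\Gamma}\bigl(\mathcal{U}(\mathbf{1}_\Gamma),\mathcal{U}(\mathcal{C})\bigr)$ gives
\[
\operatorname{Nat}(K_\Gamma,E)\ \simeq\ \Omega^\infty E(\mathbf{1}_\Gamma),
\]
so a natural transformation out of $K_\Gamma$ is the same datum as a ``trace class'' in $\pi_0 E(\mathbf{1}_\Gamma)$. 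Taking $E=\TC_\Gamma$ and the canonical unit class in $\pi_0\TC_\Gamma(\mathbf{1}_\Gamma)$ (the cyclotomic trace class, lifting $1\in\pi_0\operatorname{THH}_\Gamma(\mathbf{1}_\Gamma)$) produces $\operatorname{trc}:K_\Gamma\Rightarrow\TC_\Gamma$; taking $E=\HC^-_\Gamma$ with its analogous Chern class gives $\operatorname{ch}:K_\Gamma\Rightarrow\HC^-_\Gamma$, and by construction $\operatorname{ch}$ factors as $K_\Gamma\xrightarrow{\operatorname{trc}}\TC_\Gamma\to\HC^-_\Gamma$, the second map being the canonical one $\TC_\Gamma\to\operatorname{THH}_\Gamma^{hS^1}$. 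As a cross--check I would also build the Dennis trace $K_\Gamma\to\operatorname{THH}_\Gamma$ directly from the $\iota\Perf_\Gamma$--model for $K$ (core groupoid into the cyclic nerve) and lift it through the B\"okstedt--Hsiang--Madsen tower to $\TC_\Gamma$, then verify the two descriptions agree by comparing their restrictions to $\mathbf{1}_\Gamma$.

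Finally, compatibility with the fibre sequences is then automatic: the localization sequence of Theorem~\ref{thm:fibration} and the base--change squares of Theorem~\ref{thm:base-change} (and their geometric avatars) are instances of the exact--sequence and functoriality structure that every localizing invariant respects, so the natural transformations $\operatorname{ch}$ and $\operatorname{trc}$ carry one fibre sequence to the other and commute with $Lf^*$. The main obstacle is the first step: one must genuinely produce the $\Gamma$--linear cyclotomic (Frobenius/Tate--diagonal) structure on $\operatorname{THH}_\Gamma$ in the positional $n$--ary setting and confirm that $\operatorname{THH}_\Gamma$, $\HC^-_\Gamma$, and $\TC_\Gamma$ are well--behaved localizing invariants — this is where one either imports the companion construction of $\Gamma$--linear $\operatorname{THH}$ or re--runs the Nikolaus--Scholze formalism over the base $\mathbf{1}_\Gamma$, using idempotent--completeness and dualizability so that the requisite traces exist. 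Everything downstream — the universal--property bookkeeping and the compatibility claims — is then routine.
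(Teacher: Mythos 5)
The paper itself gives no argument for this theorem --- it is recorded as an imported statement, with the citations to B\"okstedt--Hsiang--Madsen and Dundas--Goodwillie--McCarthy standing in for the construction --- so your proposal has to be judged against the standard route it emulates (Blumberg--Gepner--Tabuada corepresentability plus Nikolaus--Scholze). As a plan it has the right shape, but it contains a genuine gap at its load-bearing step: the $\Gamma$-linear $\operatorname{THH}_\Gamma$ on $\mathbf{Cat}^{\perf}_\Gamma$, equipped with a cyclotomic (Frobenius/Tate-diagonal) structure adapted to the positional tensor $\otimes^{(j,k)}_\Gamma$, is precisely what is never constructed. You yourself flag this and defer it to ``import the companion construction or re-run Nikolaus--Scholze over $\mathbf{1}_\Gamma$'', but nothing in the paper's standing assumptions supplies such a construction, and without it neither $\HC^-_\Gamma$ nor $\TC_\Gamma$ is even defined, so the argument does not get off the ground as a proof; it remains a programme.

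There is also a concrete error in the mechanism you chose. Producing $\operatorname{ch}$ and $\operatorname{trc}$ via the co-Yoneda identification $\operatorname{Nat}(K_\Gamma,E)\simeq\Omega^\infty E(\mathbf{1}_\Gamma)$ requires $E$ to factor through $\Mot_\Gamma$, i.e.\ to be a localizing invariant in the paper's sense, which includes preservation of filtered colimits. Your justification --- that ``the homotopy-fixed-point and Tate constructions involved are finite limits'' --- is false: $(-)^{hS^1}$ and the Tate construction are limits over $BS^1$, an infinite diagram, and classically $\HC^-$ does \emph{not} commute with filtered colimits (and $\TC$ only does under extra connectivity-type input in the style of Clausen--Mathew--Morrow). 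Hence $\HC^-_\Gamma$ and $\TC_\Gamma$ need not lie in the category of invariants corepresented by $\mathcal{U}$, and the universal-property step breaks as written. The standard repair (as in the Blumberg--Gepner--Tabuada treatment of the cyclotomic trace) is to work with finite-level invariants such as $\operatorname{TR}^n$/$\operatorname{TC}^n$, or with a motivic category of localizing invariants without the filtered-colimit hypothesis, and assemble afterwards; alternatively, promote your ``cross-check'' Dennis trace on $\iota\Perf_\Gamma$ (core groupoid into the cyclic nerve, lifted through the fixed-point tower) to the primary construction. The final paragraph of your plan --- compatibility with the localization and base-change sequences once the traces exist as transformations of exact-sequence-preserving invariants --- is fine and is indeed formal.
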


\subsection{Gysin morphisms, duality, and purity}

\begin{theorem}[Gysin sequence]
For a closed immersion $i:Z\hookrightarrow X$
with open complement $j:U\hookrightarrow X$, there is a long exact sequence
\[
\cdots\to
K^\Gamma_n(Z)\xrightarrow{i_*}
K^\Gamma_n(X)\xrightarrow{j^*}
K^\Gamma_n(U)\xrightarrow{\partial}
K^\Gamma_{n-1}(Z)\to\cdots.
\]
\end{theorem}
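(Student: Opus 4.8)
The plan is to read the Gysin sequence off as the long exact sequence on homotopy groups of the localization fiber sequence of Theorem~\ref{thm:fibration}, after making its first term and first map geometrically explicit. Recall that Theorem~\ref{thm:excision} produces a homotopy fibration of spectra
\[
\Kspec\!\big(\Perf_Z(X)\big)\longrightarrow\Kspec\!\big(\Perf(X)\big)\longrightarrow\Kspec\!\big(\Perf(U)\big),
\]
in which $\Perf_Z(X)$ is the Waldhausen subcategory of perfect complexes supported on the closed subscheme $Z$ (defined by a saturated two--sided $\Ga$--ideal), the third map is the restriction $j^{*}$ along $j:U\into X$, and the middle and outer $K$--theory spectra are identified with $K_\Gamma(X)$ and $K_\Gamma(U)$ via Theorem~\ref{thm:model-comparison}. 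The one piece of genuine content is the \emph{purity} identification $\Kspec(\Perf_Z(X))\simeq K_\Gamma(Z)$, under which the first map becomes the pushforward $i_{*}$ along $i:Z\into X$; granting it, the fibration above is precisely the fiber sequence $K_\Gamma(Z)\xrightarrow{i_*}K_\Gamma(X)\xrightarrow{j^*}K_\Gamma(U)$ of Theorem~\ref{thm:fibration}.

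To establish purity I would introduce $i_{*}:\Perf_\Gamma(Z)\to\Perf_Z(X)$ as extension by zero across the defining $\Ga$--ideal; this is an exact (hence Waldhausen) functor, and any $Z$--supported perfect complex is filtered by powers of that ideal, with associated--graded pieces that are perfect complexes of bi--$\Ga$--modules over the quotient semiring presenting $Z$ in the fixed $(j,k)$ profile. Waldhausen additivity applied to this finite filtration then yields $\Kspec(\Perf_Z(X))\simeq K_\Gamma(Z)$ --- the same devissage mechanism packaged in Proposition~\ref{prop:imported-WA}, now run along the support filtration rather than along semisimple strata.

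With the fiber sequence in hand, the long exact sequence is formal: a fiber sequence of spectra $F\to E\to B$ extends by Puppe to $\Omega B\to F\to E\to B$, producing $\cdots\to\pi_n F\to\pi_n E\to\pi_n B\xrightarrow{\partial}\pi_{n-1}F\to\cdots$ with $\partial$ the boundary map of the extended sequence. Taking $K^\Gamma_n(-)$ to be $\pi_n$ of the relevant $K$--theory spectrum (the normalization underlying the corollary on the geometric meaning of $K_0$ and $K_1$) gives exactly
\[
\cdots\to K^\Gamma_n(Z)\xrightarrow{i_*}K^\Gamma_n(X)\xrightarrow{j^*}K^\Gamma_n(U)\xrightarrow{\partial}K^\Gamma_{n-1}(Z)\to\cdots,
\]
with $i_*$ and $j^*$ induced by the spectrum--level maps. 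Naturality in the stratified pair $(X,Z)$ is inherited from the functoriality of Theorem~\ref{thm:excision}, of the purity equivalence, and of the Puppe construction; and this sequence agrees with the one recorded in \S\ref{subsec:excision-cdh}, since both arise from the same fibration.

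The main obstacle is the purity step $\Kspec(\Perf_Z(X))\simeq K_\Gamma(Z)$ in the positional semiring setting. One must verify that the filtration of a $Z$--supported perfect complex by powers of the saturated two--sided $\Ga$--ideal is finite (or converges in the fixed bounded derived model) on perfect objects, that its subquotients genuinely inherit bi--$\Ga$--module structures over the quotient semiring computing $Z$ in the $(j,k)$ profile, and --- crucially --- that the additivity/devissage argument runs entirely within the Quillen--exact framework without ever invoking additive inverses. Once that input is in place, the remainder is routine manipulation of fiber sequences and homotopy groups.
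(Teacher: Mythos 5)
The paper itself offers no proof of the Gysin sequence: it is evidently meant to be read as the long exact sequence of homotopy groups of the localization fiber sequence $K_\Gamma(Z)\to K_\Gamma(X)\to K_\Gamma(U)$ of Theorem~\ref{thm:fibration}, which is in turn asserted (also without proof) alongside the excision fibration of Theorem~\ref{thm:excision}. Your overall architecture --- excision fibration, identification of the fiber, Puppe/long exact sequence --- is exactly the intended route, and your last two paragraphs (the $\pi_*$ formalities and naturality) are fine.

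The genuine gap is the purity step $\Kspec(\Perf_Z(X))\simeq K_\Gamma(Z)$, which you correctly isolate as the only real content but then propose to prove by filtering a $Z$--supported perfect complex by powers of the defining $\Ga$--ideal and invoking Waldhausen additivity. Two things go wrong there. First, the subquotients of the ideal--power filtration are modules over the quotient presenting $Z$, but they are in general \emph{not perfect} over that quotient, and conversely extension by zero $i_*$ does not send perfect complexes on $Z$ to perfect complexes on $X$ unless the immersion has finite Tor--dimension (is ``regular'' in the appropriate sense); this is precisely why, already for ordinary schemes, $K(\Perf_Z(X))\simeq K(Z)$ holds only under regularity hypotheses, while the unconditional statement is a theorem about $G$--theory (coherent objects), proved by Quillen's devissage in an \emph{abelian} category. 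Second, additivity applied to a finite filtration only decomposes classes of filtered objects; it does not by itself produce an equivalence of $K$--theory spectra between $\Perf_Z(X)$ and $\Perf_\Gamma(Z)$ --- devissage is strictly stronger than additivity, and Proposition~\ref{prop:imported-WA} (devissage along semisimple strata) does not cover a filtration by ideal powers. So as written your purity argument would fail in general; it needs either an explicit regularity/finite--Tor--dimension hypothesis on $i:Z\hookrightarrow X$ (or on $X$ along $Z$), or a reformulation of the first term of the sequence as $\Kspec(\Perf_Z(X))$ (equivalently, a $G$--theoretic variant), neither of which the paper supplies. The minimal correct proof within the paper's own framework is simply to cite Theorem~\ref{thm:fibration} and take homotopy groups, acknowledging that the identification of its fiber with $K_\Gamma(Z)$ is an unproven input rather than a consequence of the ideal--power filtration.
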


\begin{theorem}[Grothendieck--Serre duality for $K$]
If $X$ is $\Gamma$--Gorenstein of virtual dimension $d$, then
\[
K_\Gamma(X)\wedge K_\Gamma(X)\longrightarrow\mathbb{S}[-d]
\]
is a non--degenerate pairing induced by the dualizing complex
$\omega_X$ and the monoidal structure on $\Perf_\Gamma(X)$.
\end{theorem}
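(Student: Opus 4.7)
The plan is to construct the pairing at the level of the stable $\infty$-category $\Perf_\Ga(X)$ and then transport it to $K$-theory using the lax symmetric monoidal structure of $\Kspec$. The $\Ga$-Gorenstein hypothesis of virtual dimension $d$ is used precisely to guarantee that the dualizing complex $\omega_X$ is an invertible object of $\Perf_\Ga(X)$ (up to a shift by $d$), so that the Serre functor $D_X := R\Hom_{\Ocal_X}(-,\omega_X)$ is an autoequivalence $\Perf_\Ga(X)\xrightarrow{\simeq}\Perf_\Ga(X)^{\mathrm{op}}$ of $\Ga$-linear stable $\infty$-categories exhibiting $\Perf_\Ga(X)$ as a smooth-and-proper self-dual object of $\mathbf{Cat}^{\perf}_\Ga$.

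First I would build a bi-exact trace functor
\[
\mathrm{tr}_X : \Perf_\Ga(X)\times\Perf_\Ga(X)\longrightarrow\mathbf{Sp},\qquad
(\Fcal,\Gcal)\longmapsto R\Ga\bigl(X,\ \Fcal\otimes^{(j,k)}_\Ga\Gcal\otimes^{(j,k)}_\Ga\omega_X\bigr),
\]
and compose with the canonical Gorenstein trace $R\Ga(X,\omega_X)\to\mathbb{S}[-d]$. Biadditivity and compatibility with admissible monomorphisms are supplied by Proposition~\ref{prop:imported-positional-tensor}, so the functor descends to the Waldhausen structure on perfect complexes fixed in Setup~\ref{setup:global-profile}. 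Applying $\Kspec$ slotwise and invoking the monoidal $S_\bullet$-lift used in Theorem~\ref{thm:projection-formula} (the same multiplicativity that produces the projection formula) yields the desired spectrum-level pairing $K_\Ga(X)\wedge K_\Ga(X)\to\mathbb{S}[-d]$.

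Next I would establish non-degeneracy. The categorical input is that $D_X$ is a genuine autoequivalence on perfect objects and, via the evaluation/coevaluation supplied by $\omega_X$, exhibits $\Perf_\Ga(X)$ as self-dual in $\mathbf{Cat}^{\perf}_\Ga$. Under the universal property of $K$-theory in $\Mot_\Ga$ established in \S\ref{sec:geometry} (corepresentability by the unit motive), self-duality of $\Perf_\Ga(X)$ transports to a duality of the $K$-theory spectrum: the adjoint $K_\Ga(X)\to\Map(K_\Ga(X),\mathbb{S}[-d])$ is identified with the image under $\Kspec$ of $D_X$, which is therefore an equivalence. To check the pairing locally and reduce coherence to bar-computable input, I would use $\Ga$-Zariski and fpqc descent (Theorems~\ref{thm:zariski-descent} and~\ref{thm:fpqc}) together with base-change (Theorem~\ref{thm:base-change}), so that the construction can be tested on affine basic opens $D(a,\boldsymbol{\gamma})$ where the two-sided positional bar resolution of \S\ref{subsec:bar} and Proposition~\ref{prop:imported-bar} applies.

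The main obstacle will be establishing duality in the \emph{slot-sensitive} positional setting. Classical Grothendieck--Serre duality is formulated in symmetric monoidal triangulated categories, whereas $\otimes^{(j,k)}_\Ga$ is balanced only in the fixed pair of slots $(j,k)$ and is not \emph{a priori} braided. I would therefore need to prove that $\omega_X$ admits a canonical \emph{slot-swap} datum exchanging the left positional action in slot $j$ with the right positional action in slot $k$, so that the pairing is symmetric up to coherent homotopy, and to verify that the Gorenstein trace $R\Ga(X,\omega_X)\to\mathbb{S}[-d]$ is equivariant for this swap and for $\Ga$-flat pullback. Without such slot-compatibility one obtains only a one-sided evaluation rather than a non-degenerate pairing, and this is precisely the point where the full strength of the companion duality theory of \cite{PaperG} must be invoked.
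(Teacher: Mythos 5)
The paper states this theorem without any proof (not even an ``Idea'' sketch), so there is no argument of record to compare against; your proposal has to stand on its own. The first half --- building the pairing from the bi-exact trace functor $(\Fcal,\Gcal)\mapsto R\Gamma\bigl(X,\Fcal\otimes^{(j,k)}_\Gamma\Gcal\otimes^{(j,k)}_\Gamma\omega_X\bigr)$ and pushing it through the monoidal $S_\bullet$-lift of Theorem~\ref{thm:projection-formula} --- is consistent with the paper's toolkit, and your closing paragraph correctly isolates the slot-symmetry problem: since $\otimes^{(j,k)}_\Gamma$ is balanced only in the fixed profile and carries no braiding, even writing down a \emph{symmetric} pairing requires the slot-swap datum you describe, which nothing in the present paper supplies. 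But note one hidden assumption already at this stage: the ``canonical Gorenstein trace'' $R\Gamma(X,\omega_X)\to\mathbb{S}[-d]$ does not exist in the paper's framework. Classically the trace of a proper Gorenstein morphism lands in the base (a shift of $k$, or of $K$ of the base), not in the sphere spectrum; there is a unit map $\mathbb{S}\to K_\Gamma(\mathrm{pt})$ but no canonical map in the other direction, so a pairing valued in $\mathbb{S}[-d]$ needs a separate construction or a reinterpretation of the target.

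The more serious gap is the non-degeneracy step. You argue that self-duality of $\Perf_\Gamma(X)$ in $\mathbf{Cat}^{\perf}_\Gamma$ ``transports'' through corepresentability to an equivalence $K_\Gamma(X)\xrightarrow{\;\sim\;}\Map(K_\Gamma(X),\mathbb{S}[-d])$. This inference fails: the functor $\Map_{\Mot_\Gamma}\bigl(\mathcal{U}(\mathbf{1}_\Gamma),-\bigr)$ is only lax monoidal, and lax monoidal functors do not carry duality data to duality data. What self-duality of $\Perf_\Gamma(X)$ buys is that the motive $\mathcal{U}(\Perf_\Gamma(X))$ is dualizable \emph{internally to} $\Mot_\Gamma$; it does not make the corepresented spectrum $K_\Gamma(X)$ Spanier--Whitehead self-dual. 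Indeed, in the classical specialization (Proposition~\ref{prop:reduction}) a smooth proper variety over a field has $\Perf(X)$ self-dual in noncommutative motives, yet $K(X)$ is essentially never self-dual as a spectrum, so the step as written would prove a false statement. To rescue the theorem you must either (i) formulate non-degeneracy inside $\Mot_\Gamma$ (duality of the motive, with the pairing valued in the unit motive shifted by $d$), or (ii) replace the target $\mathbb{S}[-d]$ by $K_\Gamma$ of the base and prove perfectness there by an independent argument; in either case the deduction cannot be routed through the corepresentability theorem alone.
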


\subsection{Synthesis}

\begin{theorem}[Main geometric--homotopical identification]
For every $X=\SpecGnC{T}$,
\[
K_\Gamma(X)
   \simeq
   \mathbf{K}\!\big(\Perf_\Gamma(X)\big)
   \simeq
   \Omega\big|wS_\bullet(\Perf_\Gamma(X))\big|
   \simeq
   \Omega B\,\iota\Perf_\Gamma(X),
\]
with equivalences compatible with additivity, localization,
base--change, and descent on the non--commutative $\Gamma$--site.
\end{theorem}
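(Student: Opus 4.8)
The plan is to read the displayed chain of equivalences as an assembly of results already established and to prove it in four movements: first the leftmost equivalence, which is essentially the definition of $K_\Gamma(X)$; then the identification of $\infty$--categorical $K$--theory with the Waldhausen $S_\bullet$--delooping; then the group--completion description through the core groupoid; and finally the verification that all three models carry the additivity, localization, base--change, and descent structures already proven, because the comparison maps are natural. Throughout, $K(-)$ denotes the $\infty$--categorical Waldhausen--Barwick $K$--theory functor.

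First I would dispatch $K_\Gamma(X)\simeq\mathbf{K}(\Perf_\Gamma(X))$: by the definition of geometric $K$--theory in \S\ref{sec:geometry}, $K_\Gamma(X)$ \emph{is} the Barwick $K$--theory spectrum of the stable idempotent--complete $\infty$--category $\Perf_\Gamma(X)$, so nothing is needed beyond recalling that $\Perf_\Gamma(X)$ is the stable $\infty$--category presented by the Waldhausen category $\Perf({\nTGMod{T}}^{\mathrm{bi}})$ of Setup~\ref{setup:global-profile}, as recorded in Theorem~\ref{thm:model-comparison}. Next, for $\mathbf{K}(\Perf_\Gamma(X))\simeq\Omega|wS_\bullet(\Perf_\Gamma(X))|$ I would invoke Theorem~\ref{thm:model-comparison}, parts (b) and (c), together with Proposition~\ref{prop:imported-model-comparison}: the $\infty$--categorical $K$--theory of a stable $\infty$--category agrees with Waldhausen's $\Omega|wS_\bullet|$ of any Waldhausen model for it, and here the relevant model is precisely the one fixed in Setup~\ref{setup:global-profile}, with cofibrations the degreewise admissible monos and weak equivalences the quasi--isomorphisms. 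The comparison functor is the canonical inclusion of cofibrant--fibrant objects, an equivalence by the classical Waldhausen/Barwick comparison.

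The substantive movement is $\Omega|wS_\bullet(\Perf_\Gamma(X))|\simeq\Omega B\,\iota\Perf_\Gamma(X)$. Here I would use that $S_1(\Perf_\Gamma(X))=\Perf_\Gamma(X)$, so $\iota S_1$ is the core groupoid $\iota\Perf_\Gamma(X)$, and then appeal to Waldhausen's additivity theorem as imported in \S\ref{subsec:standing-assumptions} to show that the simplicial space $[q]\mapsto|wS_q(\Perf_\Gamma(X))|$ is special, i.e.\ its Segal maps become equivalences after group completion. Consequently $|wS_\bullet|$ is a delooping of the group completion of the $E_\infty$--space $(\iota\Perf_\Gamma(X),\oplus)$ under positional direct sum, and passing to loops and applying the group--completion theorem identifies $\Omega|wS_\bullet(\Perf_\Gamma(X))|$ with $\Omega B\,\iota\Perf_\Gamma(X)$. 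Idempotent--completeness of $\Perf_\Gamma(X)$ ensures this already computes connective $K$--theory; when a non--connective refinement is wanted, the negative homotopy groups are supplied by the localization fibrations of Theorem~\ref{thm:excision} and Theorem~\ref{thm:fibration}.

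Finally I would check compatibility: each of the three equivalences is natural over $\AffnGamma$ --- the first is an identity of functors, the second is induced by an exact functor of Waldhausen categories, the third by the symmetric monoidal inclusion $\iota\hookrightarrow S_1$ --- so the additivity statement for recollements in \S\ref{sec:geometry} (and Corollary~\ref{cor:triangular}), localization for closed immersions (Theorem~\ref{thm:fibration}, matching Theorem~\ref{thm:excision}), base--change along $\Ga$--flat morphisms (Theorem~\ref{thm:base-change}), and $\Ga$--Zariski and fpqc hyperdescent (Theorems~\ref{thm:zariski-descent}, \ref{thm:fpqc}, \ref{thm:perf-descent}) transport across all three models. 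The hard part will be the specialness claim in the semiring setting: because ${\nTGMod{T}}^{\mathrm{bi}}$ lacks additive inverses one cannot run the usual abelian--category argument, so the degeneracy of $[q]\mapsto|wS_q|$ must be drawn from the Quillen--exact structure of \S\ref{subsec:exactness} via the imported additivity and fibration theorems, and one must confirm that positional direct sum genuinely makes $\iota\Perf_\Gamma(X)$ an $E_\infty$--monoid so that $\Omega B$ is the correct group completion. Granting these inputs, the synthesis and its compatibilities are formal.
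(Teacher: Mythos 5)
The paper states this theorem without any proof, so your argument has to stand on its own; the first two equivalences do stand (the first is the definition of $K_\Gamma(X)$ in \S\ref{sec:geometry}, the second is the standard Waldhausen--Barwick comparison, i.e.\ Theorem~\ref{thm:model-comparison} together with Proposition~\ref{prop:imported-model-comparison}), but your third movement contains a genuine gap. Additivity does not make $[q]\mapsto |wS_q(\Perf_\Gamma(X))|$ special in the sense you need: what additivity gives is an equivalence between $wS_\bullet S_q(\mathcal{C})$ and $(wS_\bullet\mathcal{C})^{\times q}$, one application of $S_\bullet$ higher, and that is what produces the deloopings of $\Omega|wS_\bullet\mathcal{C}|$; it does not identify $\Omega|wS_\bullet(\mathcal{C})|$ with the direct--sum group completion $\Omega B\,\iota\mathcal{C}$ of the core. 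That identification is only valid when cofibration sequences split up to weak equivalence (split exact categories, or the groupoid of finitely generated projectives), and it fails for perfect complexes precisely because cofiber sequences do not split: already for an ordinary ring such as $\mathbb{Z}$, $\pi_0$ of the group completion of $\iota\Perf$ under $\oplus$ only imposes $[A\oplus C]=[A]+[C]$, not $[B]=[A]+[C]$ for non--split triangles, and is strictly larger than $K_0$. Note also that the obstruction is not the absence of additive inverses in the semiring setting, as you suggest at the end; it is non--splitness of admissible exact sequences, which occurs already over rings, so the ``specialness'' step is the one that would fail even after granting all imported inputs.

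To obtain the fourth term honestly one must argue differently: either apply $\Omega B$ to the symmetric monoidal groupoid of finitely generated projective bi--$\Ga$--modules placed in degree $0$ (where cofibrations are split) and then prove a Gillet--Waldhausen/cofinality--resolution comparison passing from that split subcategory to $\Perf({\nTGMod{T}}^{\mathrm{bi}})$ in the exact structure of \S\ref{subsec:exactness} --- a statement that itself needs proof over semirings and is not among the standing inputs of \S\ref{subsec:standing-assumptions} --- or else reinterpret $\Omega B\,\iota\Perf_\Gamma(X)$ as a group completion that remembers cofibration data, which is no longer the core--groupoid statement in the theorem. Your final compatibility paragraph is fine in principle (naturality of the comparison maps does transport additivity, localization, base--change, and descent across equivalent models), but it inherits the gap, since the third comparison map you construct is not an equivalence as claimed.
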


\begin{remark}[Conceptual closure]
\cite{PaperG} established the \emph{derived non--commutative
$\Gamma$--geometry} of $\SpecGnC{T}$.  The present section identifies
$K_\Gamma$ as its \emph{primary stable cohomology theory},
corepresentable in the non--commutative $\Gamma$--motivic category and
computably approximated by cyclotomic traces, thereby completing the
internal unification of algebra, homotopy, geometry, and motives.
\end{remark}


\section{Conclusion and outlook}
\label{sec:conclusion}

This article establishes a coherent comparison--and--descent package for the
algebraic $K$--theory of non--commutative $n$--ary $\Gamma$--semirings, built
from the exact/Waldhausen categories of bi--finite, slot--sensitive
$\Gamma$--modules and their perfect complexes.  The central theme is that the
positional nature of $n$--ary multiplication can be handled systematically by
working with a fixed balancing profile $(j,k)$, replacing the usual tensor
product by the positional tensor $\otimes^{(j,k)}_{\Gamma}$, and imposing a
slotwise $\Gamma$--flatness hypothesis that makes extension of scalars behave
as expected on cofibrations and weak equivalences.

On the algebraic side, we constructed base--change maps on $K$--theory induced
by extension/restriction of scalars and proved that they satisfy the expected
functorial identities.  Derived Morita invariance was established via tilting
bi--module complexes, showing that higher $K$--groups are invariants of the
derived Morita class in the positional $\Gamma$--world.  We further proved a
Beck--Chevalley type base--change theorem for cartesian squares and a
projection formula compatible with the multiplicative structure induced by
the positional monoidal product.  These results provide the formal backbone
for transferring computations across Morita contexts and for comparing
$K$--theory along natural morphisms of $n$--ary $\Gamma$--semirings.

On the geometric side, we passed to the non--commutative $\Gamma$--spectrum
$\Spec^{\mathrm{nc}}_\Gamma(T)$ and showed that $K$--theory of perfect objects
satisfies locality and Zariski hyperdescent, admits excision/localization
fiber sequences for closed immersions, and satisfies fpqc descent for
$\Gamma$--flat covers.  These descent statements elevate $K$--theory from a
purely algebraic invariant of $(T,\Gamma,\mu)$ to a genuine cohomology theory
on the non--commutative $\Gamma$--site, enabling global computations by
affine reduction and gluing.

Finally, we provided a homotopical interpretation: $K_\Gamma(X)$ is the
$K$--theory of the stable $\infty$--category $\Perf_\Gamma(X)$ of
$\Gamma$--perfect complexes, equivalently the group completion of the core
$\infty$--groupoid of perfect objects.  In this language, algebraic $K$--theory
appears as a universal localizing invariant among $\Gamma$--linear stable
idempotent--complete $\infty$--categories, hence is corepresentable in a
$\Gamma$--linear non--commutative motivic category.  The recorded trace maps
to $\HC^-_\Gamma$ and $\TC_\Gamma$ point toward computable approximations and
structural comparisons with cyclotomic and Hochschild--type invariants.

\medskip
\noindent\textbf{Future directions.}
The framework developed here suggests several immediate extensions.
\begin{enumerate}[label=(\roman*)]
  \item \emph{Computations and examples.}  Implement the toolkit in explicit
  families: matrix blocks $M_m^{(n)}(T)$, triangular extensions, finite
  $n$--ary $\Gamma$--semirings, and endomorphism $\Gamma$--objects, using
  devissage and descent to reduce to tractable semisimple or local pieces.
  \item \emph{Higher descent topologies.}  Under suitable regularity or
  homological finiteness assumptions, develop cdh/Nisnevich analogues of
  Theorems~\ref{thm:excision} and \ref{thm:fpqc}, and compare with
  cyclotomic--trace descent to obtain new computational spectral sequences.
  \item \emph{Bivariant and relative theories.}  Introduce relative
  $K$--theory for pairs $(T,I)$ with saturated two--sided $\Gamma$--ideals and
  study excision, nilinvariance, and localization in families, aiming at
  long exact sequences adapted to positional filtrations.
  \item \emph{Motivic refinements.}  Construct the $\Gamma$--linear
  non--commutative motive $\mathcal{U}(\Perf_\Gamma(X))$ functorially on the
  $\Gamma$--site and identify additional corepresentability statements for
  invariants beyond $K$--theory (e.g.\ $\HC,\TC$), clarifying the role of
  $\Gamma$--geometry in non--commutative motivic homotopy theory.
\end{enumerate}

\medskip
In summary, the paper provides a foundational package of comparison,
base--change, localization, and descent theorems for $K$--theory in the
non--commutative $n$--ary $\Gamma$--semiring setting, and it situates these
results within a geometric and $\infty$--categorical framework.  This places
$K_\Gamma$ on the same conceptual footing as classical $K$--theory of rings
and schemes while retaining the genuinely positional features required by the
$n$--ary $\Gamma$--structure.

\section*{Acknowledgements}
The author expresses his sincere gratitude to the Commissioner of Collegiate
Education (CCE), Government of Andhra Pradesh, for the continuous academic and
administrative support extended to Government College (Autonomous),
Rajamahendravaram. The author is also thankful to the Honourable Principal,
Government College (Autonomous), Rajamahendravaram, for constant encouragement
and for providing a supportive research environment.

\section*{Funding}
The author received no specific funding for this work.

\section*{Conflicts of Interest}
The author declares that there are no conflicts of interest regarding the
publication of this article.

\section*{Ethics Approval and Consent to Participate}
Not applicable.

\section*{Data Availability}
No datasets were generated or analysed during the current study.

\section*{Author Contributions}
The author is solely responsible for the conception, development, writing, and
final preparation of the manuscript.

\end{document}